\newtheorem{theorem}{Theorem}[section]
\newtheorem{lemma}{Lemma}[section]
\newtheorem{corollary}{Corollary}[section]
\newtheorem{conjecture}{Conjecture}[section]
\numberwithin{equation}{section}
\def\Z{\mathbb Z}
\def\R{\mathbb R}
\def\d{\partial}
\def\a{\alpha}
\def\b{\beta}
\def\g{\gamma}
\def\s{\sigma}
\def\e{\epsilon}
\def\D{\Delta}
\title{Complexity of Shadows \& Traversing Flows in Terms of the Simplicial Volume}
\author{Gabriel Katz}
\address{5 Bridle Path Circle, Framingham, MA 01701, USA}
\email{gabkatz@gmail.com}
\begin{document}
\maketitle

\begin{abstract} 
We combine Gromov's amenable localization technique with the Poincar\'{e} duality to study the traversally generic vector flows on smooth compact manifolds $X$ with boundary.  Such flows generate well-understood stratifications of $X$ by the trajectories that are tangent to the boundary in a particular  canonical fashion. Specifically, we get  lower estimates of the numbers of connected components of these flow-generated strata of any given codimension. These universal  bounds are basically expressed in terms of the normed homology of the fundamental groups $\pi_1(X)$ and $\pi_1(DX)$, where $DX$ denotes the double of $X$. The norm here is the Gromov simplicial semi-norm  in homology. It turns out that some close relatives of the normed homology spaces $H_{k+1}(DX; \R)$, $H_{k}(X; \R)$ form  obstructions to the existence of $k$-convex traversally generic vector flows on $X$. 
\end{abstract} 

\section{Introduction}

This paper is a direct extension of \cite{AK}. As the latter article, it draws its inspiration from the paper of Gromov \cite{Gr1} where, among other things, the machinery of amenable localization has been developed.  Here we combine the amenable localization  with the Poincar\'{e} duality to study the \emph{traversally generic} vector flows (see \cite{K2} or Section 2 for the definition) on smooth compact manifolds $X$ with boundary.  

An example of a traversally generic field $v$  on a surface $X$ is shown in Fig. 1 (the field $v$ is vertical). The trajectory space $\mathcal T(v)$ of the $v$-flow is a graph whose verticies are trivalent or univalent. So the \emph{local} topology of $\mathcal T(v)$ is quite rigid; moreover, it is \emph{universal} for all  traversally generic fields on surfaces. The fibers of the obvious map $\Gamma: X \to \mathcal T(v)$ are closed segments or singletons, hence $\Gamma$ is a homotopy equivalence. Note that the trivalent verticies of $\mathcal T(v)$ correspond the the $v$-trajectories $\g$ that first pierce the boundary $\d X$ transversally, then tangentially touch it, then pierce it again traversally. We use the pattern $(121)$ to encode this behavior of $\g$. The majority of $\gamma$'s intersect $\d X$ at two points where $\g$ is transversal to $\d X$; we use the pattern $(11)$ to mark such $\gamma$'s. Finally, there are points-trajectories that correspond to the univalent verticies of $\mathcal T(v)$; they are marked with the combinatorial pattern $(2)$. The traversally generic fields on surfaces do not admit other combinatorial patterns of tangency (say, like $(31)$ or $(1221)$).

\begin{figure}[ht]
\centerline{\includegraphics[height=2.5in,width=3.3in]{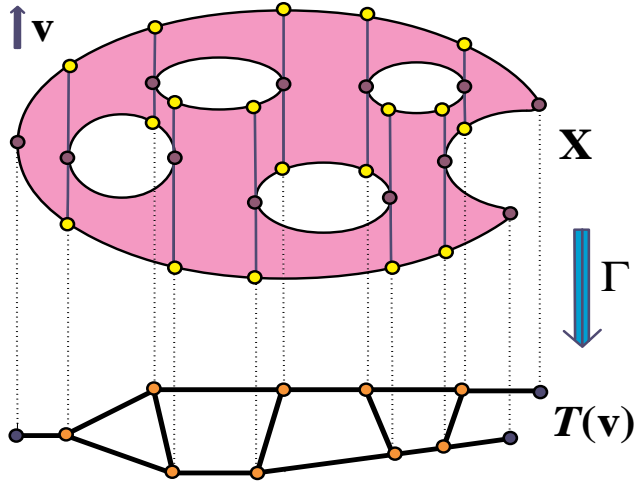}} 
\bigskip
\caption{Traversally generic ``vertical" field $v$ on the surface $X$, its stratified trajectory space $\mathcal T(v)$, and the obvious map $\Gamma: X \to \mathcal T(v)$}
\end{figure}

Similarly, for a smooth traversally generic vector field $v$ on a compact $(n+1)$-manifold $X$ with boundary, the trajectory space $\mathcal T(v)$  acquires a stratification $\{\mathcal T(v, \omega)\}_\omega$ by the combinatorial patterns of tangency $\omega$ that belong to a \emph{universal} poset $\Omega^\bullet_{'\langle n]}$  (see \cite{K2} and Section 2).  It depends only on $\dim(X)$. 

The more numerous the connected components of these stratifications are, the more \emph{complex} the $v$-flow is (thus the word ``complexity" in the title of the paper). So our goal here is to find  some \emph{lower} bounds of the numbers such components. \smallskip

The $\Omega^\bullet_{'\langle n]}$-stratification of  $\mathcal T(v)$ generates the stratification $\{X(v, \omega) =_{\mathsf{def}} \Gamma^{-1}(\mathcal T(v, \omega))\}_\omega$ of $X$ and the stratification $\{\d X(v, \omega) =_{\mathsf{def}} X(v, \omega) \cap \d X\}_\omega$ of $\d X$. The $X$-stratification can be refined by considering the connected components of the sets $\{\d X(v, \omega),\; X^\circ(v, \omega)\}_\omega\}_\omega,$ where $X^\circ(v, \omega) =_{\mathsf{def}} X(v, \omega) \setminus \d X(v, \omega)$. 
In Fig. 1, the $0$-dimensional strata of this stratification are the bold (light and dark) dots, the $1$-dimensional strata are the segments that connect the dots (some of them are portions of trajectories, some are arcs that belong to the boundary $\d X$), and  the $2$-dimensional strata are open cells in which the $1$-dimensional strata divide the surface $X$. \smallskip

We consider an auxiliary closed manifold, the double $DX =_{\mathsf{def}} X \cup_{\d X} X$ of $X$. The double comes equipped with an involution $\tau$ so that $(DX)^\tau = \d X$ and $DX/\{\tau\} = X$. 

We stratify $DX$ by the connected components of the sets (see Fig. 3): $$\{\d X(v, \omega),\; X^\circ(v, \omega),\; \tau(X^\circ(v, \omega)) \}_\omega$$

All these $v$-induced stratifications of $\mathcal T(v)$, $X$, and $DX$ are the foci of our investigation. \smallskip

Let $Z$ be a topological space. Recall that the  homology  $H_j(Z; \R)$ comes equipped with the Gromov \emph{simplicial semi-norm} $\| \sim \|_\D$ (see \cite{Gr} and Definition 3.3). We denote by $H_j^{\mathbf \D}(Z; \R)$ the quotient of $H_j(Z; \R)$ by the subspace of elements whose simplicial semi-norm is zero. Thus, $H_j^{\mathbf \D}(Z; \R)$ is a normed space with respect to the quotient semi-norm. \smallskip

For technical reasons, related to the application of the Gromov Localization Lemma \ref{lem11.1}, we substitute the relative homology $H_\ast(X, \d X)$ with the absolute homology $H_\ast(DX)$. \smallskip

By Theorem 4.5, $\dim\big(H_{k+1}^{\mathbf \D}(X; \R)\big) \leq C_{n-k}(v)$, where  $C_{n-k}(v)$ denotes the number of connected components of dimensions $n - k$ of the strata $\{X^\circ(v, \omega)\}_\omega$. 

Similarly, $\dim\big(H_{k+1}^{\mathbf \D}(DX; \R)\big) \leq \Sigma C_{n-k}(v)$, the number  of connected components of dimensions $n - k$ of the strata $\{\d X(v, \omega),\; X^\circ(v, \omega),\; \tau(X^\circ(v, \omega)) \}_\omega.$

Thus,  for \emph{any} traversally generic $v$-flow  on $X$, $\dim\big(H_{k+1}^{\mathbf \D}(X; \R)\big)$ and $\dim\big(H_{k+1}^{\mathbf \D}(DX; \R)\big)$ ---two homotopy-theoretical invariants of  $(X, \d X)$---deliver  lower estimates for $C_{n-k}(v)$ and $\Sigma C_{n-k}(v)$, the  $(n-k)$-dimensional \emph{traversing complexities} of $v$!  By their very nature, $C_{n-k}(X)$ and $\Sigma C_{n-k}(X)$---the \emph{minima} of  such  complexities, taken over all traversally generic vector fields $v$ on $X$,---are new invariants of the \emph{smooth} structure on $X$. So, the homotopy theory of the pair $(X, \d X)$ puts nontrivial restrictions on its  traversing complexity. 

We stress that all our results are vacuous  when the fundamental groups $\pi_1(DX)$ or $\pi_1(X)$ are \emph{amenable} (see Definition 3.5). \smallskip 

Let us describe another geometrical manifestation of basically the same phenomenon (see Theorem 4.2 and Corollary 4.1). Let $f: M \to X$ be a map from a closed $(k +1)$-dimensional pseudo-manifold $M$ to $X$. It realizes the homology class $f_\ast[M] \in H_{k+1}(X)$. We may assume that $f$ is transversal to each pure $(n-k)$-dimensional  stratum from the stratification $\{X^\circ(v, \omega)\}_\omega$ of $\textup{int}(X)$. Then there exists an universal constant $c > 0$ (which depends only on $n$ and $k$) such that, for any $X$,  any traversally generic $v$ on it, and any $f$ as above, the intersection number of the cycle $f(M)$ with the union of $(n -k)$-dimensional strata from the stratification $\{X^\circ(v, \omega)\}_\omega$ is greater then or equal to $c \cdot  \|f_\ast[M]\|_\D$. 
\smallskip

Here is a couple of \emph{examples} that show how these general conclusions apply to the traversally generic  flows on  smooth compact $4$-folds with boundary. When $\dim(X)= 4$, the only nontrivial lower bounds of traversing complexities can be provided by the groups $H^{\mathbf \D}_0(X), H^{\mathbf \D}_2(X), H^{\mathbf \D}_3(X)$, and $H^{\mathbf \D}_0(DX), H^{\mathbf \D}_2(DX), H^{\mathbf \D}_3(DX), H^{\mathbf \D}_4(DX)$.

The basic arguments that validate the following example are similar to the ones we use in Example 4.2.

Let $Z_i$ be a fibration over a closed oriented surface $M_i$ of a genus $g(M_i) \geq 2$ and with a  closed surface fiber $F_i$ ($i = 1, \dots, N$). Assume that  $Z_i \to M_i$ admits a section $s_i$. 
Form the connected sum $Z = Z_1\, \#\,  Z_2 \, \# \, \dots \, \# \,  Z_N$
and consider a smooth closed $4$-fold $W$ which is \emph{homotopy equivalent} to $Z$. 

By deleting the standard $4$-ball from $W$ we get the $4$-fold  $X = W \setminus D^4$ whose boundary is the sphere $S^3$. When at least one the fibers $\{F_i\}$ has the genus $g(F_i) \geq 2$, then $\|[DX]\|_\D > 0$ and $\dim\big(H_4^{\mathbf \D}(DX)\big) = 1$. Under these hypotheses, our results imply that  any transversally generic $v$ on $X$ must have at least one trajectory $\g$ from the following list: {\bf (a)} either $\g$ pierces he boundary sphere transversally, then simply touches it 3 times, then transversally pierces it again (the  combinatorial pattern of such $\g$ is $(12221)$), or {\bf (b)}  $\g$ is cubically tangent to the boundary, then simply tangent, then pierces $S^3$ transversally (the  combinatorial pattern of such $\g$ is either $(321)$ or $(123)$), or {\bf (c)} $\g$ is transversal to the boundary sphere, then quartically tangent to it, then meets it transversally again (the  combinatorial pattern of such $\g$ is $(141)$). Moreover, we prove that there exists a universal constant $\theta > 0$, such that the number of such trajectories grows as $\theta \cdot \|[DX]\|_\D$ at least.\smallskip

The images of the classes $\{[s_i(M_i)]\}_i$ are independent in $H^{\mathbf \D}_2(X)$. Thus, $\dim\big(H_2^{\mathbf \D}(X)\big) \geq N$. Applying  Theorem \ref{th11.10}  to the traversally generic $v$ on $X$, we get 
\[
3 \cdot\#\pi_0(\mathcal T(v, 1221)) + \#\pi_0(\mathcal T(v, 13)) + \#\pi_0(\mathcal T(v, 31)) \geq N,  
\]
where $\#\pi_0(\mathcal T(v, \omega))$ denotes the number of connected $1$-dimensional components of the combinatorial type $\omega$ in $\mathcal T(v)$.

The Morse theory helps to exclude a priory the tangencies of local multiplicities $3$ and higher. Let  $W$ be as before and let $f: W \to \R$ be a Morse function whose gradient field $v$ satisfies the Morse-Smale transversality property. We form a compact smooth $4$-fold $X$ by deleting from $W$ sufficiently small standard $4$-balls, centered on the $f$-critical points. For such a choice of $X$ and $v$, the combinatorial tangency patterns of the $v$-trajectories in $X$ belong to the list: $(11), (121), (1221), (12221)$. Then by similar arguments, $3 \cdot\#\pi_0(\mathcal T(v, 1221)) \geq N$.  \hfill $\diamondsuit$
 \smallskip

Now let us describe the structure of the paper.  In Section 2, for the reader's convenience, we reintroduce the notion of a traversally generic vector field and describe its basic properties, needed for Section 4. (they have been studied in a series of papers \cite{K}, \cite{K1} -\cite{K4}, and \cite{AK}). 
\smallskip

In Section 3, we study maps from a given compact $\mathsf{PL}$-manifold $X$ with boundary onto special compact $CW$-complexes $K$, $\dim(K) = \dim(X) - 1$. The local topology (the types of singularities) of $K$ is prescribed a priori; it is $X$-independent. We require the fibers of $F: X \to K$ to be $\mathsf{PL}$-homeomorphic to closed segments  or to singletons. We call such maps $F$ the \emph{shadows} of $X$. This setting is mimicking the maps $\Gamma: X \to \mathcal T(v)$, generated by traversally generic fields $v$ on smooth manifolds $X$ with boundary. 

The target space $K$ of a shadow $F$ comes equipped with a natural stratification, defined by the local topology of the singular loci in $K$. With the help of $F$, that stratification induces stratifications in $X$ and in its double $DX$. 

We introduce the $j$-th \emph{complexities} of a shadow $F$ as the number of connected components of the strata of the fixed dimension $j$ in $F(X)$, $X$ or in $DX$.

The main results of Section 3 are Theorem 3.1-3.3.  The first two deal with ``the amenable localization of the Poincar\'{e} duality operators", in particular, with estimations of their norms in terms of the normed ``homology" groups $H^{\mathbf\D}_\ast(DX; \R)$ or $H^{\mathbf\D}_\ast(X; \R)$. 
In Theorem 3.3,  the $j$-th complexities of any shadow $F$ are estimated from below by the ranks of the groups $H^{\mathbf\D}_{n+1-j}(DX; \R)$ or $H^{\mathbf\D}_{n+1-j}(X; \R)$, where $n+1 = \dim(X)$. 
\smallskip

In Section 4, we apply the results from Section 3 to the special shadows, produced by the traversally generic vector fields. The applications deliver two main results, Theorem 4.2 and Theorem 4.5. Recall that the $v$-flow canonically generates some well-understood stratifications of the spaces $\mathcal T(v)$, $X$, and $DX$ (see \cite{K2}). As in the category of shadows,  these stratifications lead to few competing notions of \emph{complexity} for traversally generic flows. In Theorem 4.5, we get  lower estimates of the numbers of connected components of these flow-generated strata of any given dimension. The estimates are \emph{universal} for the given homotopy type of the pair $(X, \d X)$ and any traversing field on $X$. Again, these universal  bounds are expressed in terms of the \emph{normed} homology of $DX$ or of $X$. Moreover, we prove that  that  the normed spaces $H^{\mathbf\D}_{n+1 -j}(DX; \R)$ and  $H^{\mathbf\D}_{n -j}(X; \R)$ form \emph{obstructions} to the existence of the \emph{globally $j$-convex} (see Definition 2.2) traversally generic vector flows on a given $X$.

In the process of studying various  complexities  of traversally generic flows, we introduce few graded differential complexes $\mathbf C_\ast^\mho(\mathcal T(v))$, $\mathbf C_\ast^\mho(X, v)$, $\mathbf C_\ast^\mho(DX, v)$ (see formulae (\ref{eq11.30}), (\ref{eq11.34})). They are naturally produced by the filtrations of the spaces $\mathcal T(v)$, $X$,  and $DX$ by the $v$-induced strata of a fixed codimension. The differential complexes $\mathbf C_\ast^\mho(\sim, v)$ are more refined invariants of the $v$-flow than the flow complexities: the complexities $\{tc_j(\sim, v)\}_j$ are just the ranks of the corresponding $j$-graded terms of these complexes. Although $\mathbf C_\ast^\mho(\sim, v)$ seem to be the right instruments for studying traversing vector fields on $X$ and ultimately $X$ itself, their homological investigation belongs to a different paper. 

\section{Basics of Traversally Generic Vector Fields}

We start with presenting few basic definitions and facts related to the traversally generic vector fields.

Let $X$ be a compact connected smooth $(n+1)$-dimensional manifold with boundary. A vector field $v$ is called \emph{traversing} if each $v$-trajectory is ether a closed interval with both ends residing in $\d X$, or a singleton also residing in $\d X$ (see \cite{K1} for the details). In particular, a traversing field does not vanish in $X$. In fact, $v$ is traversing if and only if $v \neq 0$ and $v$ is of the gradient type (see \cite{K1}). 

For traversing fields $v$, the trajectory space $\mathcal T(v)$ is homology equivalent  to $X$ (Theorem 5.1, \cite{K3}).   \smallskip

We denote by $\mathcal V_{\mathsf{trav}}(X)$ the space of traversing fields on $X$. \smallskip

In this paper, we consider an important subclass of traversing fields which we call \emph{traversally generic} (see formula (2.4) and Definition 3.2 from \cite{K2}).

For a traversally  generic field $v$, the trajectory space $\mathcal T(v)$ is stratified by closed subspaces, labeled by the elements $\omega$ of an \emph{universal} poset $\Omega^\bullet_{'\langle n]}$  which depends only on $\dim(X) = n+1$ (see \cite{K3}, Section 2, for the definition and properties of $\Omega^\bullet_{'\langle n]}$). The elements $\omega \in \Omega^\bullet_{'\langle n]}$ correspond to combinatorial patterns that describe the way in which $v$-trajectories $\g \subset X$ intersect the boundary $\d_1 X =_{\mathsf{def}} \d X$. Each intersection point $a \in \g \cap \d_1X$ acquires a well-defined \emph{multiplicity} $m(a)$, a natural number that reflects the order of tangency of $\g$ to $\d_1X$ at $a$ (see \cite{K1} and Definition 2.1 for the expanded definition of $m(a)$). So $\g \cap \d_1X$ can be viewed as a \emph{divisor} $D_\g$ on $\g$, an ordered set of points in $\g$ with their multiplicities. Then $\omega$ is just the ordered sequence of multiplicities $\{m(a)\}_{a \in \g \cap \d_1X }$, the order being prescribed by $v$. 

The support of the divisor $D_\g$ is either a singleton $a$, in which case $m(a) \equiv 0 \; \mod \, 2$, or the minimum and maximum points of $\sup D_\g$ have \emph{odd} multiplicities, and the rest of the points have \emph{even} multiplicities. \smallskip

Let 
\begin{eqnarray}\label{eq2.1}
m(\g) =_{\mathsf{def}} \sum_{a \in \g\, \cap \, \d_1X }\; m(a)\quad \text{and} \quad m'(\g) =_{\mathsf{def}} \sum_{a \in \g \, \cap \, \d_1X }\; (m(a) -1).
\end{eqnarray} 
Similarly, for $\omega =_{\mathsf{def}} (\omega_1, \omega_2, \dots , \omega_i,  \dots )$ we introduce the \emph{norm} and the \emph{reduced norm} of $\omega$ by the formulas: 

\begin{eqnarray}\label{eq2.2}
|\omega| =_{\mathsf{def}} \sum_i\; \omega_i \quad \text{and} \quad |\omega|'  =_{\mathsf{def}} \sum_i\; (\omega_i -1).
\end{eqnarray}
\smallskip

Let $\d_jX =_{\mathsf{def}} \d_jX(v)$ denote the locus of points $a \in \d_1X$ such that the multiplicity of the $v$-trajectory $\g_a$ through $a$ at $a$ is greater than or equal to $j$. This locus has a description in terms of  an auxiliary function $z: \hat X \to \R$ which satisfies the following three properties:
\begin{eqnarray}\label{eq2.3}
\end{eqnarray}

\begin{itemize}
\item $0$ is a regular value of $z$,   
\item $z^{-1}(0) = \d_1X$, and 
\item $z^{-1}((-\infty, 0]) = X$. 
\end{itemize}

In terms of $z$, the locus $\d_jX =_{\mathsf{def}} \d_jX(v)$ is defined by the equations: 
$$\{z =0,\; \mathcal L_vz = 0,\; \dots, \;  \mathcal L_v^{(j-1)}z = 0\},$$
where $\mathcal L_v^{(k)}$ stands for the $k$-th iteration of the Lie derivative operator $\mathcal L_v$ in the direction of $v$ (see \cite{K2}). 

The pure stratum $\d_jX^\circ \subset \d_jX$ is defined by the additional constraint  $\mathcal L_v^{(j)}z \neq 0$. 
\smallskip

\noindent{\bf Definition 2.1} The multiplicity $m(a)$, where $a \in \d X$, is the index $j$ such that $a \in \d_jX^\circ$. 

\hfill $\diamondsuit$
\smallskip

The characteristic property of \emph{traversally generic} fields is that they admit special flow-adjusted coordinate systems, in which the boundary is given by  quite special polynomial equations (see formula (\ref{eq2.4})) and the trajectories are parallel to one of the preferred coordinate axis (see  \cite{K2}, Lemma 3.4). For a traversally generic $v$ on a $(n+1)$-dimensional $X$, the vicinity $U \subset \hat X$ of each $v$-trajectory $\g$ of the combinatorial type $\omega$ has a special coordinate system $$(u, \vec x, \vec y): U \to \R\times \R^{|\omega|'} \times \R^{n-|\omega|'}.$$ By Lemma 3.4  and formula $(3.17)$ from \cite{K2}, in these coordinates, the boundary $\d_1X$ is given  by the polynomial equation: 
\begin{eqnarray}\label{eq2.4}
 \wp (u, \vec x) =_{\mathsf{def}} \prod_i \big[(u-i)^{\omega_i} + \sum_{l = 0}^{\omega_i-2} x_{i, l}(u -i)^l \big] = 0
 \end{eqnarray}
of an even degree $|\omega|$ in $u$. Here  $i \in \Z$ runs over the distinct roots of  $\wp (u, \vec 0)$ and  $\vec x =_{\mathsf{def}} \{ x_{i, l}\}_{i,l}$.   
At the same time, $X$ is given by the polynomial inequality $\{\wp(u, \vec x) \leq 0\}$.  Each $v$-trajectory in $U$ is produced by freezing all the coordinates $\vec x, \vec y$, while letting $u$ to be free.
\smallskip

We denote by $X(v, \omega)$ the union of $v$-trajectories whose divisors are of a given combinatorial type $\omega \in \Omega^\bullet_{'\langle n]}$. Its closure $\cup_{\omega' \preceq_\bullet \omega}\; X(v, \omega')$ is denoted by $X(v, \omega_{\succeq_\bullet})$.
\smallskip

Each pure stratum $\mathcal T(v, \omega) \subset \mathcal T(v)$ is an open smooth manifold and, as such, has a ``conventional" tangent bundle. \smallskip

We denote by $\mathcal V^\ddagger(X)$ the space of traversally  generic fields on $X$.  It turns out that $\mathcal V^\ddagger(X)$ is an \emph{open} and \emph{dense} (in the $C^\infty$-topology) subspace of $\mathcal V_{\mathsf{trav}}(X)$ (see \cite{K2}, Theorem 3.5).
\smallskip

\noindent{\bf Definition 2.2.} We say that a traversing field $v$ on $X$ is \emph{globally} $k$-\emph{convex} if $m'(\g) < k$ for any $v$-trajectory $\g$. \hfill $\diamondsuit$
\smallskip

\section{Shadows of Manifolds with Boundary and their Complexity}

The notion and properties of \emph{shadows} (see Definition 3.1), the main subject of this section, are inspired by the maps $\Gamma: X \to \mathcal T(v)$, where the field $v$ is traversally generic. \smallskip

We are going to pick a fixed and carefully chosen class of compact $n$-dimensional $CW$-complexes $K$ whose local topological structure is prescribed. 

Let $X$ be a compact connected $\mathsf{PL}$-manifold of dimension $n+1$ with boundary. 

We will consider a variety of surjective maps $\{F: X \to K\}$ with the $F$-fibers being a particular type of \emph{contractible} $1$-dimensional complexes (in this paper, segments and singletons). We think of such $CW$-complexes $K = F(X)$ as ``shadows" of the given manifold $X$. We consider singularities in $K$ of particular types $\{K(\omega)\}_\omega$ and intend to count the cardinalities $\{\#\pi_0(K(\omega))\}_\omega$.  We  view this count of connected components of the strata $K(\omega)$ as measuring the \emph{complexity} of the surjection $F$. Then we minimize these numbers over all $F$'s to get various notions of complexity for the given manifold $X$. \smallskip

Let us start with a quite general setting. Let $\mathcal S$ be a poset equipped with two maps:  a map $\mu: \mathcal S \to \Z_+$ and an order-preserving map $\mu': \mathcal S \to \Z_+$. By definition, for each $\omega \in \mathcal S$, $\mu'(\omega) < \mu(\omega)$. For each $n \in \Z_+$, we assume that the poset $\mathcal S_n =_{\mathsf{def}} (\mu')^{-1}([0, n])$ is finite. 

With each element $\omega \in \mathcal S_n$ we associate a model compact $CW$-complex $\mathsf T_\omega$ of dimension $\mu'(\omega)$ and a model compact $\mathsf{PL}$-manifold $\mathsf E_\omega$ of dimension $\mu'(\omega) +1$. They are linked by a $\mathsf{PL}$-map $p_\omega: \mathsf E_\omega \to \mathsf T_\omega$ whose fibers are \emph{closed intervals} or \emph{singletons}.  In what follows, we will list additional properties of the two collections, $\mathsf E =_{\mathsf{def}} \{\mathsf E_\omega\}_{\omega \in \mathcal S}$ and  $\mathsf T =_{\mathsf{def}} \{\mathsf T_\omega\}_{\omega \in \mathcal S}$ (exhibiting topologically distinct $\mathsf T_\omega$'s). We will do it in a recursive fashion.
\smallskip

Consider a set $\mathsf {Cosp}(\mathsf T, n)$\footnote{``$\mathsf {Cosp}$" is an abbreviation of ``cospine".} of $n$-dimensional compact $CW$-complexes $K$ such that each point $y \in K$ has a neighborhood which is $\mathsf{PL}$-homeomorphic to the product $\mathsf T_\omega \times D^{n- \mu'(\omega)}$ for some $\omega \in \mathcal S$, where $\mu'(\omega) \leq n$, and  $D^{n- \mu'(\omega)}$ denotes the standard ball. 

We require that each model space $\mathsf T_\omega$\footnote{``normal" to the $\omega$-labeled stratum $K(\omega)$ in $K$} topologically will be a \emph{cone} over a space $\mathsf S_\omega$ that belongs to the set $\mathsf {Cosp}(\mathsf T, n-1)$. 

By definition, $\mathsf {Cosp}(\mathsf T, 1)$ consists of finite graphs whose verticies are of valencies $1$ and $3$ only.

We denote by $K(\omega)$ the set of points in $K$ whose neighborhoods are modeled after  the space $\mathsf T_\omega \times D^{n- \mu(\omega)}$. It follows that each $K(\omega) \subset K$ is a locally closed  $\mathsf{PL}$-manifold.  

Let us also consider a filtration 
$K = K_{-0} \supset K_{-1} \supset \dots \supset K_{-n}$
of $K$ by the closed subcomplexes
\begin{eqnarray}\label{eq11.2}
K_{-j} =_{\mathsf{def}} \bigcup_{\{\omega \in \mathcal S_n |\; \mu'(\omega) \geq j\}} \, K(\omega)
\end{eqnarray}
of dimensions $n-j$.  Note that $K_{-j} = \emptyset$ implies $K_{-(j+1)} = \emptyset$.
\smallskip

{\bf Definition 3.1.}
Let $X$ be a compact connected $\mathsf{PL}$-manifold of dimension $n+1$.  We assume that $\d X \neq \emptyset$. Consider the set  $\mathsf {Shad}(X, \mathsf E \Rightarrow \mathsf T)$ of surjective $\mathsf{PL}$-maps $F: X \to  K$ such that:
\begin{itemize}
\item $K$ is a compact $CW$-complex of the type from $\mathsf {Cosp}(\mathsf T, n)$,
\item each fiber of $F$ is $\mathsf{PL}$-homeomorphic to a closed  interval $I$ or to a singleton $pt$, where $\partial I$ and $pt$ reside in $\partial X$.
\item $F^\partial =_{\mathsf{def}} F|: \partial X \to K$ is a surjective map with finite fibers,
\item for each $\omega \in \mathcal S$, the map  $$F|: F^{-1}(K(\omega)) \to K(\omega)$$ is a trivial fibration with an orientable manifold base, and the  map $$F^\partial |: (F^\partial)^{-1}(K(\omega)) \to K(\omega)$$ is a trivial covering with the fiber of cardinality $\mu(\omega) -\mu'(\omega)$,
\item each point $y \in K(\omega)$ has a regular neighborhood $V_y \subset K$ such that the map $$F: (F^{-1}(V_y), F^{-1}(\d V_y)) \to (V_y, \d V_y)$$ is conjugate to the model map $$p_\omega \times id : (\mathsf E_\omega, \delta \mathsf E_\omega) \times D^{n- \mu(\omega)} \to  (\mathsf T_\omega, \mathsf S_\omega) \times D^{n- \mu(\omega)}$$ via a $\mathsf{PL}$- homeomorphism which preserves the $\mathcal S_n$-stratifications in both spaces. Here $\delta \mathsf E_\omega$ denotes the portion of the boundary $\d \mathsf E_\omega$ that is mapped to $\mathsf S_\omega$.
\end{itemize}

For $F \in \mathsf {Shad}(X,  \mathsf E \Rightarrow \mathsf T)$, we call the $CW$-complex  $F(X)$ a $\mathcal S$-\emph{shadow} of $X$. 
\hfill $\diamondsuit$
\smallskip

{\bf Definition 3.2.}
We say that  a compact connected $\mathsf{PL}$-manifold $X$ with boundary is \emph{globally $k$-convex} if it has a shadow $F \in  \mathsf {Shad}(X,  \mathsf E \Rightarrow \mathsf T)$ with the property $F(X)_{-k} = \emptyset$.

Note that the global $k$-convexity implies global $(k+1)$-convexity. \hfill $\diamondsuit$
\smallskip

Part of this section is devoted to finding obstructions to the global $k$-convexity, selected from the tool set of algebraic topology. \smallskip

Next, we will employ \emph{Gromov's simplicial semi-norms} \cite{Gr} to give lower estimates of  the complexities of  shadows and of traversing flows on a given $(n+1)$-manifold $X$. The number of $v$-trajectories of the (maximal) reduced multiplicity $\dim(X) - 1$ can serve as an example of such complexity. 
First, let us recall a few relevant definitions (see \cite{Gr}, \cite{Gr1}). \smallskip

{\bf Definition 3.3.}
Let $X \supset Y$ be topological spaces. Given a relative homology class $h \in H_k(X, Y; \R)$, consider all relative singular cycles $c = \sum_i r_i \s_i$ that represent $h$. Here $r_i \in \R$ and $\s_i: \D^k \to X$ are singular simplicies;  each $(k-1)$-simplex from the algebraic boundary $\d c$ being mapped to $Y$. We assume that, for any compact $K \subset X$, only finitely many images $\{\s_i(\D)\}$ intersect $K$. 
Put $$\|c\|_{l_1} =_{\mathsf{def}} \sum_i |r_i|.$$ We define the \emph{simplicial semi-norm} of a homology class $h$ by the formula:
$$\| h\|_\D =_{\mathsf{def}} \inf_c \big\{\|c\|_{l_1}\big\}.$$
\hfill $\diamondsuit$
\smallskip

This construction defines a semi-norm $\|\sim \|_\D$ on the vector space $H_k(X, Y;\, \R)$. The semi-norm is monotone decreasing under continuous maps of pairs of spaces: $$\|h\|_\D \geq  \|f_\ast(h)\|_\D$$ for any $h \in H_k(X_1, Y_1\; \Z)$ and a continuous map $f: (X_1, Y_1) \to (X_2, Y_2)$.
Moreover, if $f: X_1 \to X_2$ is a continuous map such that $f_\ast: \pi_1(X_1) \to \pi_1(X_2)$ is an \emph{isomorphism} of the fundamental groups, then $\| f_\ast(h) \|_\D = \| h \|_\D$ for any $h \in H_k(X_1; \R)$ \cite{Gr}.
\smallskip

If $M$ is any closed, oriented \emph{hyperbolic} manifold, then 
\begin{eqnarray}\label{eq11.3}
\textup{Vol}(M) = c(n) \cdot \| [M] \|_\D,
\end{eqnarray}
where $[M]$ denotes the fundamental class of $M$ and $c(n)$ is an universal positive constant (this is the Proportionality Theorem, page 11 of [Gr]). For this reason, the simplicial norm of the fundamental class $[X]$ is often called the \emph{simplicial volume}.\smallskip

Gromov's   Localization Lemma 3.1 below relies on the notion of the \emph{stratified simplicial semi-norm},  available for stratified spaces $X$ and pairs $X \supset Y$ of stratified spaces. \smallskip

We consider stratified spaces such that if a stratum $S$ intersects the closure $\overline{S'}$ of another stratum $S'$, then $S \subseteq \overline{S'}$.  In this case we write $S \preceq S'$.  If neither $S \preceq S'$ nor $S' \preceq S$, then we say the two strata are \emph{incomparable}. \smallskip

Recall that the \emph{corank} of a stratum $Y_\omega$ in a $\mathcal S$-stratification of a given space $Y$ is the maximal length $k$ of a filtration $Y_\omega \subset \bar Y_{\omega_1} \subset \dots \subset \bar Y_{\omega_k}$ by the distinct strata whose closures contain $Y_\omega$. \smallskip 

\begin{figure}[ht]
\centerline{\includegraphics[height=3in,width=4in]{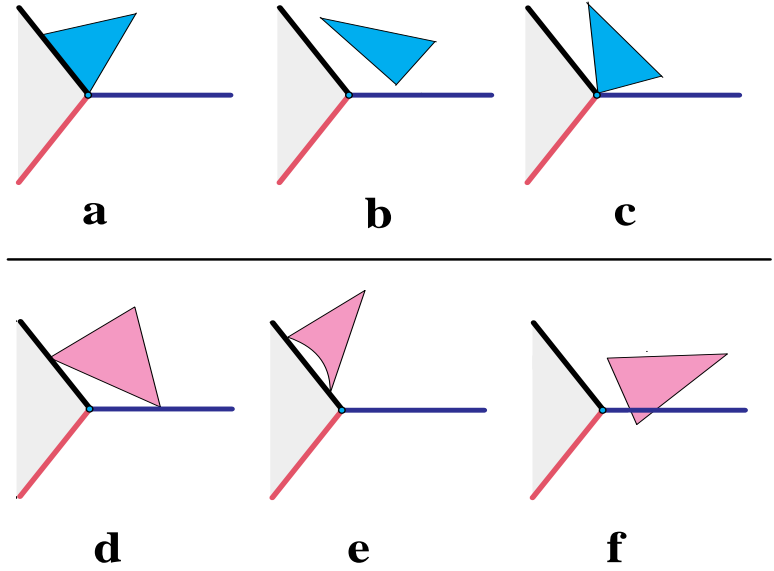}} 
\bigskip
\caption{Examples of a singular $2$-simplex in relation to a stratification of the plane by a single stratum of codimension $2$, three strata of codimension $1$,  and two strata of codimension $0$. Diagrams (a), (b), (c) are consistent with the four bullet list from Definition 3.4; diagram (d) violates the second bullet, diagram (e) violates the third bullet, diagram (f) violates the first bullet.}
\end{figure}

{\bf Definition 3.4.} 
Let $X$ be a $\mathcal S$-stratified topological space. Given a real homology class $h \in H_k(X; \R)$, consider all singular cycles $c = \sum_i r_i \s_i$ that represent $h$, where $r_i \in \R$ and $\s_i: \D^k \to X$ are singular simplicies that are consistent with the stratification $\mathcal S$, in the following sense\footnote{Gromov gives two conditions: \emph{ord}(er) and \emph{int}(ernality) (\cite{Gr1}, p.~27).  We use these two conditions plus two more!}:

\begin{itemize}
\item   We require that for each simplex $\s_i$ of $c$, the image of the interior of each face (of any dimension) must be contained in one stratum.  We call this the \emph{cellular} condition.
\item The (\emph{ord}) condition states that the image of each simplex of $c$ must be contained in a totally ordered chain of strata; that is, the simplex does not intersect any two incomparable strata.  
\item The (\emph{int}) condition states that for each simplex of $c$, if the boundary of a face (of any dimension) maps into a stratum $S$, then the whole face maps into $S$.  
\item For technical reasons (involving the Amenable Reduction Lemma in \cite{AK}), we require that if two vertices of a simplex  $\s_i$ map to the same point $v \in X$, then the edge between them must be constant at $v$.  We call this the \emph{loop} condition.
\end{itemize}
We define the $\mathcal S$-\emph{stratified} simplicial semi-norm of a homology class $h$ by the formula:
$$\| h\|_\D^{\mathcal S} =_{\mathsf{def}} \inf_c \big\{\|c\|_{l_1}\big\},$$
where $c$ runs over all the cycles $c = \sum_i r_i \s_i$ that represent $h$, subject to the four properties above.

Similar definition of $\| h\|_\D^{\mathcal S}$ is available for relative homology classes $h \in H_k(X, Y; \R)$, where the inclusion $Y \subset X$ is a $\mathcal S$-stratified map. \hfill $\diamondsuit$
\smallskip

{\bf Definition 3.5.} 
A discrete group $G$ is called \emph{amenable} if for every finite subset $S \subset G$ and every $\e > 0$, there exists a finite non-empty set $A \subset G$
such that the proportion of cardinalities $$\frac{|((g \cdot A) \cup A) \setminus ((g \cdot A) \cap A)|}{|A|} < \e$$ for all $g \in S$. \hfill $\diamondsuit$
\smallskip

Finally, we are in position to state the pivotal Gromov's  Localization Lemma from \cite{Gr1}, page 772. Its proof there is a bit rough; a detailed proof can be found in \cite{AK}.

\begin{lemma}[{\bf Gromov's  Localization Lemma}]\label{lem11.1}
Let $X$ be a closed $(n+1)$-manifold with stratification $\mathcal{S}$ consisting of finitely many connected locally closed submanifolds. Pick  an integer $j$ from the interval $[0, n+1]$.  Let $Z$ be a space with the contractible universal cover\footnote{that is, a $K(\pi, 1)$-space}, and let $\alpha : X \rightarrow Z$ be a continuous map such that the $\alpha$-image of the fundamental group of each stratum of codimension less than $j$ is an amenable subgroup of $\pi_1(Z)$.  

Let $X_{-j} \subseteq X$ denote the union of strata with codimension at least $j$, and let $U$ be a neighborhood of $X_{-j}$ in $X$.  Then the $\alpha$-image of every $j$-dimensional homology class $h \in H_j(X)$ satisfies the upper bound
\[\| \alpha_*(h) \|_{\Delta} \leq \| h_U\|_{\Delta}^\mathcal{S},\]
where $h_U \in H_j(U, \d U)$ denotes the restriction of $h$ to $U$, obtained via the composite homomorphism 
$$H_j(X) \rightarrow H_j(X, X \setminus U) \rightarrow H_j(U, \d U),$$
where the last map is the excision isomorphism. \hfill $\diamondsuit$
\end{lemma}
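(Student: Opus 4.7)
The plan is to construct an absolute singular cycle in $Z$ representing $\alpha_*(h)$ by lifting an efficient stratified relative cycle in $(U,\partial U)$ to an absolute cycle in $X$, pushing it forward, and then invoking amenability to make the ``extension'' part contribute a negligible $l_1$-mass. Fix $\epsilon>0$ and choose a relative singular cycle $c_U=\sum_i r_i\sigma_i$ representing $h_U$ in $(U,\partial U)$, subject to the four stratified conditions of Definition 3.4, with
\[
\|c_U\|_{l_1}\;\leq\;\|h_U\|_{\Delta}^{\mathcal S}+\epsilon.
\]
Under the excision isomorphism $H_j(U,\partial U)\cong H_j(X,X\setminus U)$ composed with the natural map $H_j(X)\to H_j(X,X\setminus U)$, the class of $c_U$ equals the image of $h$. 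A short diagram chase in the long exact sequence of the pair $(X,X\setminus U)$ produces a singular $j$-chain $c_{\mathrm{ext}}$ supported in $X\setminus U$ with $\partial c_{\mathrm{ext}}=-\partial c_U$, so that $c:=c_U+c_{\mathrm{ext}}$ is an absolute cycle in $X$ representing $h$.

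Pushing forward, $\alpha_*(c)=\alpha_*(c_U)+\alpha_*(c_{\mathrm{ext}})$ represents $\alpha_*(h)$ in $Z$, but $\|\alpha_*(c_{\mathrm{ext}})\|_{l_1}$ is a priori uncontrolled. The crux of the proof is to replace $\alpha_*(c_{\mathrm{ext}})$ by a chain $\tilde c$ in $Z$ having the \emph{same boundary} $-\alpha_*(\partial c_U)$ and arbitrarily small $l_1$-norm. Because $c_{\mathrm{ext}}$ is supported in $X\setminus U\subseteq X\setminus X_{-j}$, every one of its simplices is carried in a stratum of codimension strictly less than $j$, and by hypothesis the $\alpha$-image of $\pi_1$ of each such stratum is amenable in $\pi_1(Z)$. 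Meanwhile, the stratified conditions (\emph{cellular}, \emph{ord}, \emph{int}, \emph{loop}) imposed on $c_U$ ensure that $\partial c_U$, lying in $\partial U$, is already adapted to the $\mathcal S$-stratification: each of its simplices maps into a single stratum of codimension $<j$, so that $\alpha_*(\partial c_U)$ is a $(j-1)$-cycle supported on a union of amenable-$\pi_1$ subspaces of $Z$. This is exactly the hypothesis needed to invoke the Amenable Reduction Lemma of \cite{AK}, which states that any such boundary-constrained chain admits representatives of arbitrarily small $l_1$-norm within its relative homology class; hence the desired $\tilde c$ with $\|\tilde c\|_{l_1}<\epsilon$ exists.

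Setting $c_Z:=\alpha_*(c_U)+\tilde c$ yields an absolute singular cycle in $Z$ representing $\alpha_*(h)$ with
\[
\|\alpha_*(h)\|_{\Delta}\;\leq\;\|c_Z\|_{l_1}\;\leq\;\|c_U\|_{l_1}+\epsilon\;\leq\;\|h_U\|_{\Delta}^{\mathcal S}+2\epsilon.
\]
Letting $\epsilon\to 0$ gives the claimed inequality. The main obstacle is the amenable filling step, i.e.\ verifying that $\tilde c$ of arbitrarily small norm genuinely exists. This is not soft: it rests on the duality between simplicial $l_1$-norm and bounded cohomology together with Johnson's vanishing of $H^*_b$ in positive degrees for amenable groups, combined with a careful reassembly of the cycle out of stratum-localized pieces. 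The role of the subtle \emph{loop} and \emph{int} conditions in Definition 3.4 is precisely to make this reassembly compatible with the fundamental groupoid structure of the strata, so that the pushforward $\alpha_*(\partial c_U)$ really does see only amenable subgroups of $\pi_1(Z)$; the role of the $K(\pi,1)$-assumption on $Z$ is to allow these bounded cohomology computations to be performed on $Z$ itself.
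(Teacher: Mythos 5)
The paper does not supply its own proof of this lemma; it explicitly defers to Gromov \cite{Gr1} (which it calls ``a bit rough'') and to the detailed write-up in \cite{AK}. So the comparison here is really with the Alpert--Katz argument, and your sketch reproduces its high-level plan: take an $\epsilon$-efficient stratified relative cycle $c_U$ for $h_U$, extend it to an absolute cycle $c=c_U+c_{\mathrm{ext}}$ in $X$, push forward to $Z$, and kill the contribution of the extension by amenability. That is the right skeleton. However, two of your steps do not go through as written.

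First, the assertion ``every one of $c_{\mathrm{ext}}$'s simplices is carried in a stratum of codimension strictly less than $j$'' is false as stated. The chain $c_{\mathrm{ext}}$ is only required to be supported in the open set $X\setminus U$; it is produced by a diagram chase in singular homology, and a generic singular $j$-simplex landing in $X\setminus U$ will cross several of the codimension-$<j$ strata. To put $c_{\mathrm{ext}}$ (or, equivalently, the full cycle $c$) into stratified form one needs an additional general-position/subdivision step so that $c$ satisfies the four bullets of Definition~3.4; this is genuinely part of the work in \cite{AK} (the \emph{cellular}, \emph{ord}, \emph{int}, \emph{loop} conditions are imposed precisely so that such a reduction is possible), and it cannot be skipped. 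Second, your use of the ``Amenable Reduction Lemma'' is not the statement that actually makes the argument close. You quote it as saying that a chain whose boundary lies in amenable strata has representatives of arbitrarily small $l_1$-norm in its relative class, and you then fill $-\alpha_*(\partial c_U)$ by a small chain. But $\partial c_U$ is supported on a \emph{union} of several strata, each individually amenable; a cycle spread over a union of amenable-$\pi_1$ pieces need not have small $l_1$ filling norm in $Z$, and the naive bounded-cohomology duality (Johnson vanishing for each piece) does not combine across the union without more structure. What Gromov's and Alpert--Katz's argument actually does is diffuse the \emph{whole} stratified cycle $c$ simplex-by-simplex over F\o lner sets of the amenable stratum groups (organized on a multicomplex model), so that the simplices lying entirely in codimension-$<j$ strata cancel in the limit, and the surviving $l_1$-mass is carried only by the simplices meeting $X_{-j}$, i.e.\ by $c_U$. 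Your proposal identifies the correct target inequality but replaces this diffusion mechanism with a filling claim that it does not justify; that is the real gap.
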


Let $X$ be a compact oriented manifold with boundary. For technical reasons related to the application of  Lemma \ref{lem11.1}, many arguments to follow will deal with the \emph{double} $DX = X\cup_{\d X} X$ of $X$ instead of the pair $(X, \d X)$. The orientation on $X$ extends to an orientation on its double. Then there is an orientation-reversing involution $\tau: DX \to DX$  whose orbit space is $X$ and whose fixed point set $DX^\tau = \d X$.

Note that any absolute homology class $\tilde h \in H_\ast(DX)$, via the restriction to $X$,  gives rise to a relative class $h \in H_\ast(X, \d X)$. Conversely, every relative class $h$, with the help of the involution $\tau$ gives rise to an absolute class $\tilde h  \in H_\ast(DX)$ whose restriction to $X \subset DX$ produces $h$. Therefore,  $H_\ast(X, \d X)$ can be viewed as a direct summand of $H_\ast(DX)$. 

However, the relation between the simplicial semi-norms of  $\tilde h  \in H_\ast(DX)$ and of $h \in H_\ast(X, \d X)$ is not so transparent. For example, if $X$ is a cylindrical surface and  $h \in H_1(X, \d X)$ is a generator, then $\|h\|_\D = 1$, while $\|\tilde h\|_\D = 0$ since homologically the longitude $\tilde h$ of the torus $DX$  can be represented by the singular rational cycle  $\frac{1}{N}\cdot \{f: [0, 1] \to DX\}$, where $f$ wraps the segment $[0, 1]$ $N$ times around the longitude; so  $\|\tilde h\|_\D \leq \frac{1}{N}$ for all $N$. 

Note that $X/\d X$ is a torus with one of its meridians being collapsed to a point. The simplicial semi-norm of the longitude in $X/\d X$ is zero.  So, although $H_1(X, \d X)$ is canonically isomorphic to the reduced homology $H_1(X/\d X, \d X/\d X)$, the isomorphism is not an isometry in the two simplicial semi-norms!

At the same time, similar considerations imply that, for the cylinder $X$, the simplicial semi-norms on both $H_2(X, \d X)$ and $H_2(DX)$ are trivial.
\smallskip

\begin{lemma}\label{lem11.2} Let $X$ be a connected compact and oriented $(n+1)$-manifold with boundary. Let $\b: DX \to X/\d X$ be the degree 1 map which collapses the second copy $\tau(X) \subset DX$ of $X$ to the point $\star = \d X/\d X \in X/\d X$, and let $\a: (X, \d X) \to (X/\d X, \star)$ be the obvious map of pairs. 
Then, for any $h \in H_j(X, \d X)$,  we get
$$2\|h\|_\D \geq \|h - \tau_\ast(h)||_\D \geq \|\a_\ast(h)\|_\D,$$
where the middle term is the simplicial semi-norm of the class $h - \tau_\ast(h) \in H_j(DX)$.

In particular, for the fundamental class  $h = [X,  \d X] \in H_{n+1}(X, \d X; \R)$, the class \hfill\break $h - \tau_\ast(h) = [DX]$ is fundamental; so we get $$2\| [X,  \d X] \|_\D \geq \| [DX] \|_\D \geq \|[X/\d X] \|_\D.$$
 \end{lemma}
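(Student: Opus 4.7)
The strategy is to produce explicit cycle-level representatives of the classes appearing in the inequalities, compare their $\ell^1$-norms, and then extract the semi-norm estimates.

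\emph{Step 1 (Construction of the lift.)} Given $h \in H_j(X, \partial X;\R)$, pick a relative cycle $c = \sum_i r_i \sigma_i$ representing $h$; thus $\partial c$ is supported in $\partial X$. Because $\tau$ fixes $\partial X$ pointwise, the chain $c - \tau_\#(c)$ has boundary $\partial c - \tau_\#(\partial c) = 0$ in $DX$, so it is an absolute cycle. Its homology class is precisely what we denote $h - \tau_\ast(h) \in H_j(DX;\R)$.

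\emph{Step 2 (Proof of $2\|h\|_\D \geq \|h - \tau_\ast(h)\|_\D$.)} Since $\tau$ is a homeomorphism, $\|\tau_\#(c)\|_{l_1}=\|c\|_{l_1}$, so $\|c - \tau_\#(c)\|_{l_1} \leq 2\|c\|_{l_1}$. Taking the infimum over all representatives $c$ of $h$ gives the bound.

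\emph{Step 3 (Proof of $\|h - \tau_\ast(h)\|_\D \geq \|\a_\ast(h)\|_\D$.)} The key claim is that $\b_\ast(h - \tau_\ast(h)) = \a_\ast(h)$ in $H_j(X/\d X;\R)$. Indeed, on $X \subset DX$ the map $\b$ agrees with the quotient $q: X \to X/\d X$ (i.e.\ with $\a$), while on $\tau(X)$ the map $\b$ is constant at $\star$. Hence $\b_\#(c - \tau_\#(c)) = q_\#(c) - (\text{chain of constant simplices at } \star)$; the latter chain is a boundary in $C_\ast(X/\d X)$, so the two cycles are homologous, verifying the claim. The monotonicity of the simplicial semi-norm under the continuous map $\b$ then yields the inequality.

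\emph{Step 4 (The fundamental-class case.)} When $h = [X,\d X]$, pick a fundamental relative cycle $c$ for $X$ oriented compatibly with the fixed orientation of $DX$. Since $\tau: DX \to DX$ is orientation-reversing, the chain $-\tau_\#(c)$ is a fundamental chain for $\tau(X) \subset DX$ with the $DX$-orientation. Adding the two, $c - \tau_\#(c)$ represents $[DX]$, so $[DX] = [X,\d X] - \tau_\ast[X,\d X]$. Moreover $\a_\ast[X,\d X] = [X/\d X]$ under the standard isomorphism $H_{n+1}(X,\d X) \cong H_{n+1}(X/\d X,\star)$. Substituting into the general inequality of Steps 2 and 3 gives the displayed chain $2\|[X,\d X]\|_\D \geq \|[DX]\|_\D \geq \|[X/\d X]\|_\D$.

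The only subtlety I expect is the bookkeeping in Step 3: one must check that $\b_\#\circ\tau_\#$ applied to $c$ really produces a \emph{bounding} chain of degenerate simplices at $\star$ (not merely a null-homotopic one), and in Step 4 one must be careful with orientation conventions so the sign in $c - \tau_\#(c)$ comes out correctly. Both are routine once stated explicitly.
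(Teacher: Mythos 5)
Your proposal is correct and follows the same route as the paper's proof: lift a relative cycle $c$ for $h$ to the absolute cycle $c - \tau_\#(c)$ in $DX$ to get the factor-of-two bound, then apply monotonicity of the simplicial semi-norm under the collapsing map $\beta$ together with the identity $\beta_\ast(h - \tau_\ast h) = \alpha_\ast(h)$. You supply the chain-level bookkeeping for that last identity and for the orientation check $[DX] = [X,\partial X] - \tau_\ast[X,\partial X]$, both of which the paper asserts without elaboration, but these are the same observations, not a different method.
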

 
\begin{proof} If $c = \sum_i r_i \s_i$ is a relative cycle with the $l_1$-norm $\|c\|$ being $\epsilon$-close to $\| h \|_\D$, then the chain $\sum_i r_i \s_i - \sum_i r_i \tau(\s_i)$ is an absolute cycle in $DX$ whose $l_1$-norm is $2 \| c\|$ at most. Thus $2\| h \|_\D \geq \| h - \tau_\ast(h)\|_\D$. 
 
On the other hand, consider the degree $1$ map $\b: DX \to X/\d X$. By \cite{Gr}, the simplicial volume does not increase under continuous maps. Therefore, observing that $\a_\ast(h) = \b_\ast(h - \tau_\ast(h))$, we get $\|  h - \tau_\ast(h)\|_\D \geq \| \a_\ast(h)] \|_\D.$
  \end{proof}

Let $\mathcal S$ be a stratification of $X$ by strata $S$ such that the sets $\{S \cap \d X\}_{S \in \mathcal S}$ form a stratification $\mathcal S^\d$ of the boundary $\d X$. Then $\mathcal S$ gives rise to a stratification $\mathcal{DS}$ of the double $DX$ so that any stratum $S \in \mathcal{DS}$ either belongs to $\d X$ (in which case $S \in \mathcal S^\d$) or to $DX \setminus \d X$. Conversely, any $\tau$-invariant stratification $\mathcal{DS}$ of the double induces a stratification $\mathcal S$ of $X$. 
\smallskip

Next, we are going to associate few useful differential chain complexes and their homology groups with shadows of manifolds $X$ with boundary.  Eventually they will become useful instruments in our investigations of various notions of complexity of traversing flows.\smallskip

Let $F \in \mathsf{Shad}(X,  \mathsf E \Rightarrow \mathsf T)$ be a shadow (see Definition 3.1) of a compact orientable $\mathsf{PL}$-manifold $X$ of dimension $n+1$. Put $K =_{\mathsf{def}} F(X)$. We consider the finite $\mathcal S_n$-stratification  $\{K(\omega)\}_{\omega \in \mathcal S_n}$ of $K$ by the model (normal to the strata) spaces $\{\mathsf T_\omega\}$ of dimensions $\{\mu'(\omega)\}$.  We denote by $X(F, \omega)$ the $F$-preimage of the pure stratum $K(\omega)$. These sets form a stratification $\mathcal S_F(X)$ of $X$. 

The $CW$-complex $K \in \mathsf{Cosp}(\mathsf T, n)$ comes equipped with a filtration $$K =_{\mathsf{def}} K_{-0} \supset K_{-1} \supset \dots \supset K_{-n}$$ which has been introduced in (2.1) 
and employs the more refined $\mathcal S_n$-stratification. 

Let $\mathcal A$ be an abelian coefficient system on $K$. As a default, $\mathcal A = \R$, the trivial coefficient system with the real numbers for a stalk. 

For each $j \in [0, n]$, consider the relative homology groups\footnote{in our notations, we suppress the dependance of these homology groups on the coefficients $\mathcal A$. }
\begin{eqnarray}\label{eq11.4} 
\{\mathsf C^\mho_j(K) : = H_j(K_{-n+j}, K_{-n+j-1}; \, \mathcal A)\}_j  
\end{eqnarray}
associated with the filtration. Note that $\dim(K_{-n+j}) = j$, so $\mathsf C^\mho_j(K)$ is the \emph{top} reduced homology of the quotient $K_{-n+j}/K_{-n+j-1}$.

These homology groups can be organized into a \emph{differential complex}
\begin{eqnarray}\label{eq11.5}
\mathbf C^\mho_\ast(K) =_{\mathsf{def}} \big\{0 \to \mathsf C^\mho_n(K) \stackrel{\partial_n}{\rightarrow} \mathsf C^\mho_{n-1}(K) \stackrel{\partial_{n-1}}{\rightarrow}  \dots \stackrel{\partial_1}{\rightarrow}  \mathsf C^\mho_0(K) \to 0\big\}, \quad
\end{eqnarray}
where the differentials $\{\partial_j\}$ are the boundary homomorphisms from the long exact homology sequences of the triples $\{K_{-n+j} \supset  K_{-n+j-1} \supset K_{-n+j-2}\}_j.$ 


Since the pure strata $K_{-n+j}^\circ =_{\mathsf{def}} K_{-n+j} \setminus K_{-n+j-1}$ are open orientable manifolds of dimension $j$, the $\mathcal A$-modules $\{\mathsf C^\mho_j(K)\}_j$ are \emph{free} and finitely generated, the number of generators being the number of connected components of the locus $K_{-n+j}^\circ$.
\smallskip

We call the homology groups $\mathsf H^\mho_\ast(K)$, associated with the differential complex (3.4), 
the $\mho$-\emph{homology} of $K$. 

In fact, the $\mho$-\emph{homology} is an ingredient of the homology spectral sequence associated with the filtration $\{K_{-j}\}_j$ and converging to the regular homology $H_\ast(K) \approx H_\ast(X)$ for any shadow  $K = F(X)$. 
Therefore the is a canonical homomorphism $A^\mho_\ast : \mathsf H^\mho_\ast(K) \to H_\ast(X)$.\smallskip


\noindent {\bf Definition 3.6.}
With any filtered $CW$-complex $K \in \mathsf {Shad}(\mathsf T, n)$ and its filtration as in (2.1), we associate the ordered collection of ranks $$\big\{c_j(K) =_{\mathsf{def}} \textup{rk}_{\mathcal A}(\mathsf C^\mho_j(K))\big\}_{0 \leq j \leq n},$$ where the groups $\{\mathsf C^\mho_j(K))\}_j$ were introduced in (3.3). 

We call $c_j(K)$ \emph{the $j$-th $\mho$-complexity} of $K$.\smallskip
\hfill $\diamondsuit$ 


\noindent{\bf Definition 3.7.}
Let $X$ be a compact connected $\mathsf{PL}$-manifold  with boundary, $\dim(X) = n+1$.
Consider the variety of maps $F: X \to K$ from the set $\mathsf {Shad}(X,  \mathsf E \Rightarrow \mathsf T)$ as in Definition 3.1.  
Each $F$ produces the sequence of $\mho$-complexities: 
$$\mathbf c(F) =_{\mathsf{def}} \big\{c_0(F(X)),\, c_1(F(X)),\, \dots ,\, c_n(F(X))\big\}$$

Consider the \emph{lexicographical minima} 
\begin{eqnarray}\label{eq11.6} 
\mathbf c^{\mathsf {shad}}(X,  \mathsf E \Rightarrow \mathsf T) =_{\mathsf{def}} \textup{lex.min}_{\{F \in \mathsf {Shad}(X, \, \mathsf E \Rightarrow \mathsf T)\}}\; \mathbf c(F(X)) 
\end{eqnarray}
We call them \emph{the lexicographic shadow complexity} of $X$. 

We denote by $c_j^{\mathsf {shad}}(X,  \mathsf E \Rightarrow \mathsf T)$ the $(j+1)$-component of the vectors  $\mathbf c^{\mathsf {shad}}(X,  \mathsf E \Rightarrow \mathsf T)$.
\hfill $\diamondsuit$
\smallskip 

\noindent{\bf Remark 3.1.}
Of course, the definitions above rely on the set  $\mathsf {Shad}(X,  \mathsf E \Rightarrow \mathsf T)$ being nonempty, a nontrivial fact which requires a carefully designed poset $\mathcal S$ and a system of models $\{p_\omega: \mathsf E_\omega \to \mathsf T_\omega\}_{\omega \in \mathcal S}$ (as the ones introduced in [K2]). \hfill $\diamondsuit$
\smallskip

\noindent{\bf Remark 3.2.}
If a compact connected $\mathsf{PL}$-manifold $X$ of dimension $n+1$  is globally $k$-convex (see Definition 3.2), 
then for some shadow $F$ and all $j \geq k$,  we get $F(X)_{-j } = \emptyset$. Thus $\mathsf C^\mho_{n-j}(F(X)) = 0$ for all $j \geq k$, implying $c_{n-j}^{\mathsf{shad}}(X,  \mathsf E \Rightarrow \mathsf T) = 0$ for all $j \geq k$. In other words, $c_{l}^{\mathsf{shad}}(X,  \mathsf E \Rightarrow \mathsf T) = 0$ for all $l \leq n-k$.
\hfill $\diamondsuit$
\smallskip 

Our next goal is to find some lower estimates of the complexities  $\{c_j^{\mathsf {shad}}(X,  \mathsf E \Rightarrow \mathsf T)\}_j$ in terms of the algebraic topology of $X$.
\smallskip

Let us consider a refinement $\mathcal S^\bullet_F(X)$ of the stratification $\mathcal S_F(X)$ of $X$, formed by the \emph{connected components} of the sets:
\begin{eqnarray}\label{eq11.7}
\qquad \qquad \big\{X^\circ(F, \omega) =_{\mathsf{def}} X(F, \omega) \setminus (\partial X \cap X(F, \omega)),\;  X^\partial(F, \omega) =_{\mathsf{def}} \partial X \cap X(F, \omega)\big\}_{\omega \in \mathcal S}.
\end{eqnarray}
Fig. 1 shows an example of such a stratification $\mathcal S^\bullet_F(X)$ on a surface $X$.

The double $DX$ acquires the $\tau$-equivariant stratification $\mathcal S_F(DX)$ whose pure strata are:  $$\{X^\circ(F, \omega),\,  \tau(X^\circ(F, \omega)),\,  X^\partial(\omega)\}_{\omega}.$$
and its refinement $\mathcal S^\bullet_F(DX)$, formed by the connected components of the strata from $\mathcal S_F(DX)$.

The stratifications $\mathcal S^\bullet_F(X)$ and $\mathcal S^\bullet_F(DX)$ give rise to the filtrations: 
$$X =_{\mathsf{def}} X_{-0}^F \supset X_{-1}^F \supset \dots \supset X_{-(n+1)}^F,$$
$$DX =_{\mathsf{def}} DX_{-0}^F \supset DX_{-1}^F \supset \dots \supset DX_{-(n+1)}^F$$
by the union of strata of a fixed codimension, each pure stratum being an open manifold.
 
Analogously to (\ref{eq11.4}) we consider the relative homology  and cohomology groups with coefficients in $\mathcal A$:
 \begin{eqnarray}\label{eq11.7} 
\{\mathsf C^\mho_j\big(DX, F\big) =_{\mathsf{def}} H_j\big(DX^F_{j -n - 1},\, DX^F_{j-n}; \, \mathcal A\big)\}_j,  \\ 
\{\mathsf C_\mho^j\big(DX, F\big) =_{\mathsf{def}} H^j\big(DX^F_{j -n - 1},\, DX^F_{j-n}; \, \mathcal A\big)\}_j. \nonumber
\end{eqnarray}
They can be organized into a differential complex: 
\begin{eqnarray}\label{eq11.8}
\mathbf C^\mho_\ast(DX, F) =_{\mathsf{def}} \qquad \qquad \qquad  \\ 
\big\{0 \to \mathsf C^\mho_{n+1}(DX, F) \stackrel{\partial_{n+1}}{\rightarrow} \mathsf C^\mho_n(DX, F) \stackrel{\partial_{n}}{\rightarrow}  \dots \stackrel{\partial_1}{\rightarrow}  \mathsf C^\mho_0(DX, F) \to 0\big\}, \nonumber
\end{eqnarray}
where the differentials $\{\partial_j\}$ are the boundary homomorphisms from the long exact homology sequences of triples $\{DX^F_{-j +1} \supset  DX^F_{-j} \supset DX^F_{-j -1}\}_j.$

Similarly, we introduce the dual differential complex 
\begin{eqnarray}\label{eq11.9}
\mathbf C_\mho^\ast(DX, F) =_{\mathsf{def}} \qquad \qquad \qquad \\ 
\big\{0 \leftarrow \mathsf C_\mho^{n+1}(DX, F) \stackrel{\partial^\ast_{n+1}}{\leftarrow} \mathsf C_\mho^n(DX, F) \stackrel{\partial^\ast_{n}}{\leftarrow}  \dots \stackrel{\partial^\ast_1}{\leftarrow} \mathsf C_\mho^0(DX, F) \leftarrow 0\big\} \nonumber 
\end{eqnarray}

When $\mathcal A = \Z$, since  $DX^F_{j -n - 1} \setminus DX^F_{j-n}$ is an open orientable $j$-manifold,  by \cite{Hat}, Corollary 3.28, the torsion  
$\textup{tor} \big\{H_{j-1}\big(DX^F_{j -n - 1},\, DX^F_{j-n}; \, \Z\big)\big\} =0$,
which results in the natural isomorphism  $$\mathsf C_\mho^j(DX, F) \approx \textup{Hom}_\Z(\mathsf C^\mho_j(DX, F),\, \Z).$$

In the notations to follow, we drop the dependence of the constructions on the choice of  the coefficient group $\mathcal A$, but presume that $\mathcal A$ is $\Z$ or $\R$. \smallskip

So the complexes $\mathbf C^\mho_\ast(DX, F)$ and $\mathbf C_\mho^\ast(DX, F)$ are comprised of \emph{free} $\mathcal A$-modules whose generators $\{[\s]\}$ are in 1-to-1 correspondence with the strata $\s\in \mathcal S^\bullet_F(DX)$ (see (\ref{eq11.7})).

We denote by $\mathsf Z^\mho_j(DX, F)$ the kernels  of the differentials  $\d_j$  and by $\mathsf B^\mho_j(DX, F)$ the images of the differentials  $\d_{j+1}$ from (\ref{eq11.9}).  
Next, we  form the $\mho$-\emph{homology}  $$\mathsf H^\mho_j(DX, F) =_{\mathsf{def}} \mathsf Z^\mho_j(DX, F)/ \mathsf B^\mho_j(DX, F)$$ 
of the double $DX$, associated with its stratification $\mathcal S^\bullet_F(DX)$.

Since each cycle $\zeta \in \mathsf Z^\mho_j(DX, F)$ is a common singular cycle in $DX$ (indeed, by  (\ref{eq11.8}) and (\ref{eq11.9}), the boundary of the $j$-chain $\zeta$ is mapped in the subcomplex of $DX$ of dimension $j-2$), there is a canonical homomorphism 
$A^\mho_\ast : \mathsf H^\mho_\ast(DX, F) \to  H_\ast(DX).$

Since the differential complex (\ref{eq11.9}) is the dual of the differential complex (\ref{eq11.8}), the natural paring $\mathsf C^{n-k}_\mho(DX, F) \otimes \mathsf C_{n-k}^\mho(DX, F) \to \R$ produces an isomorphism $$\Phi: \mathsf C^{n-k}_\mho(DX, F)/ \mathsf B^{n-k}_\mho(DX, F) \approx  \mathsf (Z^\mho_{n-k}(DX, F))^\ast,$$
where $(Z^\mho_{n-k}(DX, F))^\ast$ denotes the dual space of $Z^\mho_{n-k}(DX, F))$. 
\smallskip


The next localization construction is central to the rest of this section. 
Using the Poincar\'{e} duality $\mathcal D$ on the closed oriented manifold $DX$, for each $k \in [-1, n]$, 
we introduce the localization transfer map
$$\mathcal L^F_{k+1}: H_{k+1}\big(DX\big) \stackrel{\approx\mathcal D}{\longrightarrow} H^{n-k}\big(DX\big) \stackrel{i^\ast}{\rightarrow}  H^{n-k}\big(DX^F_{-(k+1)}\big),$$
where $i : DX^F_{-(k+1)} \subset DX$ is the embedding.

Because the locus $DX^F_{-(k+1)}$ is a $(n-k)$-dimensional $CW$-complex, the top cohomology group $H^{n-k}(DX^F_{-(k+1)})$ can be identified with the quotient 
$\mathsf C^{n-k}_\mho(DX, F)/ \mathsf B^{n-k}_\mho(DX, F)$ from the complex in  (\ref{eq11.9}). This fact follows from the diagram chase in the two long exact cohomology sequences of the pairs $\big(DX^F_{-(k+1)}, DX^F_{-(k+2)}\big), \; \big(DX^F_{-(k+2)}, DX^F_{-(k+3)}\big).$ The first pair gives rise to the fragment of the exact sequence 
$$\to H^{n-k-1}(DX^F_{-(k+2)}) \stackrel{\delta}{\rightarrow}  \mathsf C^{n-k}_\mho(DX, F) \stackrel{j^\ast}{\rightarrow} H^{n-k}(DX^F_{-(k+1)}) \to 0,$$
while the second pair to the fragment
$$ \to \mathsf C^{n-k -1}_\mho(DX, F)  \stackrel{J^\ast}{\rightarrow} H^{n-k-1}(DX^F_{-(k+2)}) \to 0,$$ 
so that the composition $J^\ast \circ \delta$ coincides  with the boundary map $$\delta^{n-k-1} : \mathsf C^{n-k -1}_\mho(DX, F) \to \mathsf C^{n-k}_\mho(DX, F)$$ from the differential complex $\mathsf C^\ast_\mho(DX, F)$.  Therefore $$H^{n-k}(DX^F_{-(k+1)}) \approx \mathsf C^{n-k}_\mho(DX, F)/ \mathsf B^{n-k}_\mho(DX, F).$$
Similar diagram chase in homology leads to an isomorphism 
$$H_{n-k}(DX^F_{-(k+1)}) \approx \mathsf Z_{n-k}^\mho(DX, F).$$

As a result, for each shadow $F$, we get the transfer homomorphism 
\begin{eqnarray}\label{eq11.11}
\qquad \qquad\mathcal L_{k+1}^{F, \mho}: H_{k+1}(DX) \stackrel{\approx\mathcal D}{\longrightarrow} H^{n-k}(DX) \stackrel{i^\ast}{\rightarrow} 
\mathsf C^{n-k}_\mho(DX, F)/ \mathsf B^{n-k}_\mho(DX, F).
\end{eqnarray}

Our next goal is to introduce a norm $|[\sim ]|_\mho$ in the target space $\mathsf C^{n-k}_\mho(DX, F)/ \mathsf B^{n-k}_\mho(DX, F)$ 
of the homomorphism $\mathcal L_{k+1}^{F, \mho}$ and to show that 
\begin{eqnarray}
\|h\|_\D \leq const(n) \cdot |[\mathcal L_{k+1}^{F, \mho}(h)]|_\mho \nonumber
\end{eqnarray}  
for any $h \in H_{k+1}(DX)$ and some \emph{universal} constant $const(n) > 0$ which depends only on the list of model maps $\{\mathsf E_\omega \to \mathsf T_\omega\}_\omega$ as in the last bullet of Definition 3.1. 

Consider the $l_1$-norm $\|\sim\|_{l_1}$ on the space $\mathsf C^{n-k}_\mho(DX, F)$ in the unitary basis $\{[\s^\ast]\}$, labeled by the strata $\s \in \mathcal S^\bullet_F(DX)$ of dimension $n - k$, and dual to the preferred basis $\{[\s]\}$ of $\mathsf C_{n-k}^\mho(DX, F)$. Then the norm $ |[\sim]|_\mho$ on $\mathsf C^{n-k}_\mho(DX, F)/ \mathsf B^{n-k}_\mho(DX, F)$ is, by definition, the quotient norm induced by the norm $\|\sim\|_{l_1}$ on the space $\mathsf C^{n-k}_\mho(DX, F)$.

We may regard the maximal proportion $$\sup_{\{h \in H_{k+1}(DX), \, \|h\|_\D \neq 0\}}\, \big\{ |[\mathcal L_{k+1}^{F, \mho}(h)]|_\mho/\|h\|_\D\big\}$$ as the norm of the operator $\mathcal L_{k+1}^{F, \mho}$. So the inequality above (see also (3.11)) testifies that these norms admit a positive lower bound,  universal for all $X$, $\dim(X) = n+1$,  and $F$.
\smallskip

As the reader examines the hypotheses of the next theorem, it is helpful to keep in mind the following simple example. Let $X$ be a 2-dimensional ball with two holes. Then $DX$ is a closed surface of  genus 2. 
Note that the fundamental group of each component of $\d X$ is abelian. So its image in $\pi_1(DX)$  is an amenable group.

\begin{theorem}[{\bf The $\mathbf 1^{st}$ Amenable Localization of the Poincar\'{e} Duality}]\label{th11.3} \hfill\break
Let $X$ be a compact connected $(n+1)$-dimensional $\mathsf{PL}$-manifold with a nonempty boundary. 

Assume that for each connected component of the boundary $\d X$, the image of its fundamental group in $\pi_1(DX)$ is an amenable group\footnote{Evidently, if for each connected component of the boundary $\d X$, the image of its fundamental group in $\pi_1(X)$ is amenable, then it is automatically amenable in $\pi_1(DX)$.}.

Then, there exists an universal constant $\Theta > 0$ such that, for any shadow  $F \in \hfill\break \mathsf {Shad}(X,  \mathsf E \Rightarrow \mathsf T)$, the space $\mathsf C^{n-k}_\mho(DX, F)/ \mathsf B^{n-k}_\mho(DX, F)$ admits a norm $|[\; \; ]|_\mho$ so that 
\begin{eqnarray}\label{eq11.12}
\|h\|_\D \leq \Theta \cdot |[\mathcal L_{k+1}^{F, \mho}(h)]|_\mho 
\end{eqnarray}  
for all $h \in H_{k+1}(DX)$. Here  the Poincar\'{e} duality localizing operator $\mathcal L_{k+1}^{F, \mho}$ is introduced in (\ref{eq11.11}), and the constant $\Theta$ depends only on $n$, the poset $\mathcal S$, and the list of model maps $\{\mathsf E_\omega \to \mathsf T_\omega\}_{\omega \in \mathcal S}$ in the way that is described in formula (\ref{eq11.16a}). 
\end{theorem}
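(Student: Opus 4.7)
My plan is to combine Poincar\'{e}--Lefschetz duality on the closed oriented manifold $DX$ with Gromov's Localization Lemma (Lemma~\ref{lem11.1}) applied to the stratification $\mathcal{S}^\bullet_F(DX)$. Given $h \in H_{k+1}(DX)$, the goal is to construct a stratified geometric cycle supported in a regular neighborhood $U$ of $DX^F_{-(k+1)}$ that represents the excision $h_U \in H_{k+1}(U,\d U)$ and whose $l_1$-norm is at most a universal constant times $|[\mathcal{L}^{F,\mho}_{k+1}(h)]|_\mho$. Gromov's Lemma then converts this geometric bound into the desired estimate on $\|h\|_\D$.

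To unpack the duality picture, I would use the diagram chase preceding the theorem to identify $i^\ast\mathcal{D}(h)\in H^{n-k}(DX^F_{-(k+1)})$ with a coset $[c^\ast]\in \mathsf{C}^{n-k}_\mho(DX,F)/\mathsf{B}^{n-k}_\mho(DX,F)$; any cocycle representative $c^\ast=\sum_\sigma r_\sigma[\sigma^\ast]$ is Poincar\'{e}--Lefschetz dual inside $U$ to a chain representing $h_U$. I would pick such a $c^\ast$ realizing the quotient $l_1$-norm $|[\mathcal{L}^{F,\mho}_{k+1}(h)]|_\mho$ up to an arbitrary $\e>0$. To each $(n-k)$-stratum $\sigma\subset DX^F_{-(k+1)}$ the fifth bullet of Definition~3.1 associates a local model $p_{\omega(\sigma)}\colon \mathsf{E}_{\omega(\sigma)}\to \mathsf{T}_{\omega(\sigma)}$ with $\dim \mathsf{E}_{\omega(\sigma)}=k+1$. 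Fixing once and for all a simplicial triangulation of each of the finitely many models $\mathsf{E}_\omega$, say with at most $N:=\max_{\omega\in\mathcal{S}_n} N_\omega$ top simplices, the fiber $\mathsf{E}_{\omega(\sigma)}\times\{*\}$ over a generic point of $\sigma$ produces a relative $(k+1)$-chain $c_\sigma$ in $U$ with $\|c_\sigma\|_{l_1}\leq N$, intersection number $\pm 1$ with $\sigma$ and $0$ with every other $(n-k)$-stratum. Setting $c:=\sum_\sigma r_\sigma c_\sigma$, a standard argument in the regular neighborhood of the stratified skeleton shows that $c$ represents $h_U$, is stratified in the sense of Definition~3.4, and satisfies $\|c\|_{l_1}\leq N\cdot\|c^\ast\|_{l_1}$.

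To conclude, I would apply Lemma~\ref{lem11.1} to the classifying map $\alpha\colon DX\to K(\pi_1(DX),1)$ with $j=k+1$. The amenability hypothesis on the components of $\d X$, combined with the product structure of the shadow models $\mathsf{T}_\omega\times D^{n-\mu(\omega)}$, should force the $\pi_1$-image in $\pi_1(DX)$ of every stratum of $\mathcal{S}^\bullet_F(DX)$ of codimension $<k+1$ to be amenable: each boundary stratum $X^\d(F,\omega)$ sits in a component of $\d X$ by construction, while each interior or $\tau$-image stratum is a trivial interval bundle over some $K(\omega)$ whose $\pi_1$-image, transported along the attaching data of the models via Van Kampen, lands in the image of a boundary component. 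Gromov's inequality then gives
\[
\|h\|_\D \;\leq\; \|h_U\|_\D^{\mathcal{S}^\bullet_F(DX)} \;\leq\; \|c\|_{l_1} \;\leq\; N\cdot\big(|[\mathcal{L}^{F,\mho}_{k+1}(h)]|_\mho+\e\big),
\]
and letting $\e\to 0$ yields (\ref{eq11.12}) with universal constant $\Theta=N$ depending only on $n$, $\mathcal{S}$, and the fixed triangulations of the models $\{\mathsf{E}_\omega\to \mathsf{T}_\omega\}_{\omega\in\mathcal{S}}$.

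The main obstacle I anticipate is precisely promoting the amenability of $\pi_1$-images from components of $\d X$ to \emph{all} strata of codimension $<k+1$ in $\mathcal{S}^\bullet_F(DX)$, since a priori a top interior stratum could carry $\pi_1$-image equal to all of $\pi_1(X)$. The argument must exploit heavily that every interior pure stratum $X^\circ(F,\omega)$ is a trivial $I$-bundle over $K(\omega)$ and that, via the model data $p_\omega\colon\mathsf{E}_\omega\to\mathsf{T}_\omega$, every such $K(\omega)$ borders strata whose preimages meet $\d X$. A secondary technical point is verifying that the geometric cycle $c$ satisfies the cellular, \emph{ord}, \emph{int}, and \emph{loop} conditions of Definition~3.4, which should follow by choosing the triangulations of each $\mathsf{E}_\omega$ adapted to the stratified boundary $\d\mathsf{E}_\omega$ and matching them coherently across the product-neighborhood overlaps prescribed by the last bullet of Definition~3.1.
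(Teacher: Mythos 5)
Your proposal follows the same architecture as the paper's proof: take a regular neighborhood $U$ of $DX^F_{-(k+1)}$, identify $\mathcal{L}^{F,\mho}_{k+1}(h)$ with a coset in $\mathsf{C}^{n-k}_\mho/\mathsf{B}^{n-k}_\mho$ via Poincar\'{e} duality, build a geometric chain dual to a near-optimal cocycle representative out of normal slices to the $(n-k)$-strata, and close the loop with Gromov's Localization Lemma applied to the classifying map of $DX$. That is exactly the paper's route.

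However, there is a genuine gap, which you yourself flag as ``the main obstacle I anticipate'': you never establish that the $\pi_1$-image in $\pi_1(DX)$ of each \emph{interior} stratum $\sigma \subset X^\circ(F,\omega)$ (or $\tau(X^\circ(F,\omega))$) is amenable. You float two ideas --- that the $I$-bundle structure helps, and that a Van Kampen argument along the ``attaching data of the models'' should push $\pi_1(\sigma)$ into the image of a boundary component --- but the second is left entirely vague and the first alone is not enough, since $\pi_1(\sigma)\cong\pi_1(F(\sigma))\subset\pi_1(K(\omega))$ could a priori have a large image. The paper's actual argument is short and concrete and you should supply it: since $F\colon X^\partial(F,\omega)\to K(\omega)$ is a \emph{trivial} covering (bullet 4 of Definition 3.1), pick a section $\rho\colon K(\omega)\to X^\partial(F,\omega)$ landing in the closure of $\sigma$ in $X$, and set $\bar\sigma = \sigma\cup\rho(F(\sigma))$. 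Then $F\colon\bar\sigma\to F(\sigma)$ is a trivial fibration with a half-open interval as fiber, so both inclusions $\sigma\hookrightarrow\bar\sigma$ and $\rho(F(\sigma))\hookrightarrow\bar\sigma$ are homotopy equivalences; hence $\pi_1(\sigma)$ and $\pi_1(\rho(F(\sigma)))$ have the same image in $\pi_1(DX)$. Since $\rho(F(\sigma))\subset\partial X$, that image is a subgroup of the (amenable) image of $\pi_1(\partial X)$, hence amenable. Without this step, the hypothesis of the Localization Lemma is simply unverified and your inequality $\|h\|_\D\le\|h_U\|^{\mathcal{S}^\bullet_F(DX)}_\D$ is unjustified.

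A smaller slip: you claim $\dim\mathsf{E}_{\omega(\sigma)} = k+1$ for every $(n-k)$-stratum $\sigma$, but this only holds when $\sigma$ is a boundary stratum ($\mu'(\omega)=k$). For interior strata one has $\mu'(\omega)=k+1$, so $\dim\mathsf{E}_\omega=k+2$, and the object you should be triangulating is not the fiber $\mathsf{E}_\omega\times\{\ast\}$ but a $(k+1)$-dimensional normal slice to $\sigma$ inside $D\mathsf{E}_\omega\times D^{n-\mu(\omega)}$ --- precisely the paper's normal disk $D^{k+1}_\sigma$. With this correction, your constant $\Theta=N$ is a legitimate (if less sharp) universal upper bound for the paper's $\Theta=\max_\sigma\|[D^{k+1}_\sigma]\|_\D^{\mathcal{S}^\bullet_F(U)}$, provided you do verify the cellular/\emph{ord}/\emph{int}/\emph{loop} conditions of Definition 3.4 for your triangulation, which you correctly list as a remaining technical point.
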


\begin{proof} Let $U$ be a \emph{regular} neighborhood of the set $DX^F_{-(k+1)}$ in $DX$.  So the $\mathsf{PL}$-manifold $U$ has a homotopy type of a $(n - k)$-dimensional $CW$-complex $DX^F_{-(k+1)}$.

Let $\Pi =_{\mathsf{def}} \pi_1(DX)$. In order to apply Localization Lemma \ref{lem11.1} to the neighborhood $U$, the homology class $h \in H_{k+1}(DX; \Z)$, and the classifying map  $\b: DX \to K(\Pi, 1)$, we need to check that, for each stratum $\s \in \mathcal S^\bullet_F(DX)$, the subgroup $\b_*\pi_1(\s)$ of $\Pi$ is an amenable group.  This is true if $\s \subseteq \d X$ since for each connected component of $\d X$, the image of its fundamental group in $\Pi$ is  amenable, and every subgroup of an amenable group is amenable.  Otherwise,  $\s$ is contained in some locus $X^\circ(\omega)$ or in some locus $\tau(X^\circ(\omega))$. 

By the fourth bullet of Definition 3.1, this preimage $X^\circ(F, \omega)$ is a trivial oriented bundle $F: X^\circ(F, \omega) \to K(\omega)$ whose fiber is a disjointed union of open intervals.  Therefore \hfill\break $F: \s \to F(\s) \subset K(\omega)$ is a trivial fibration whose fiber is an open interval. As a result, $F_\ast: \pi_1(\s) \to \pi_1(F(\s))$ is an isomorphism.

On the other hand, the covering  $F: X^\partial(F, \omega) \to K(\omega)$ is trivial and therefore admits a section $\rho: K(\omega) \to  X^\partial(F, \omega)$ such that its image intersects with the closure $\textup{cl}(\s)$ of $\s$ in $X$. Put $\bar \s =_{\mathsf{def}} \s \cup \rho(F(\s))$. The map $F: \bar \s \to F(\s)$ is a trivial  fibration with a semi-open interval for the fiber. Therefore the imbedding  $j: \s  \subset \bar \s$ is a homotopy equivalence, and so is the obvious imbedding $q: \rho(F(\s)) \subset \bar \s$. Hence $j_\ast: \pi_1(\s) \to \pi_1(\bar \s)$ and $q_\ast: \pi_1(\rho(F(\s))) \to \pi_1(\bar \s)$ are isomorphisms. Since $\rho(F(\s)) \subset \d X$ and, for every choice of the base point $x_\star \in \d X$, the $\b_\ast$-image of each fundamental group $\pi_1(\d X, x_\star)$ in $\Pi$ is amenable, so is the  $\b_\ast$-image of  $\pi_1(\rho(F(\s)))$---a subgroup of an amenable group  is amenable. By the previous arguments, the $\b_\ast$-image of  $\pi_1(\s)$  is amenable.

Now, applying localization Lemma 3.1 to the classifying map $\b: DX \to K(\Pi, 1)$, gives the inequality:  
 \begin{eqnarray}\label{eq11.13}
 \| h \|_\D \leq \| h_U \|_\D^{\mathcal S^\bullet_F(U)},
 \end{eqnarray}
 where $h_U$ denotes the restriction of the absolute homology class $h$ to $(U, \d U)$. Indeed by \cite{Gr}, $\|\b_\ast(h)\|_\D = \|h\|_\D$ since, by its construction, $\b$ induces tan isomorphism of the fundamental groups of $DX$ and $K(\Pi, 1)$. \smallskip
 
 \begin{figure}[ht]
\centerline{\includegraphics[height=3in,width=4in]{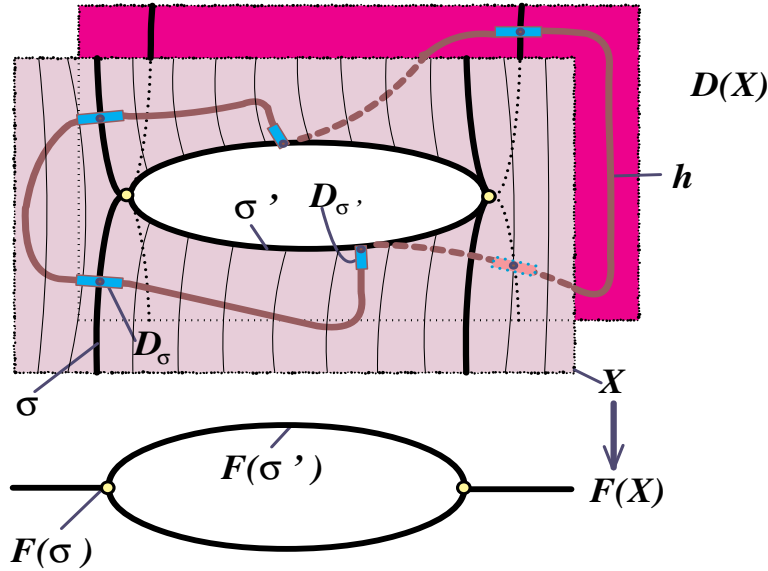}} 
\bigskip
\caption{The cycle $h \in H_{k+1}(DX)$, its transversal intersections with the strata $\{\s \subset DX^F_{-(k+1)}\}$,  and the normal disks $\{D_\s^{k+1}\}$ which represent the localization $h_U$.}
\end{figure}

 For each $(n-k)$-dimensional stratum $\s \in \mathcal S^\bullet_F(DX)$, consider an oriented disk $$(D_\s^{k+1}, \d D_\s^{k+1}) \subset (U, \d U),$$ normal to the open manifold $\s\subset U$ at its typical point. Taking a smaller regular neighborhood $U$ of the subcomplex $DX^F_{-(k+1)} \subset DX$ if necessarily, we can arrange for $\{D_\s^{k+1}\}_\s$ to be disjointed, so that each disk  $D_\s^{k+1}$ hits its stratum $\s$ transversally at a singleton and misses the rest of the strata. Note that the relative integral homology classes $\{[D_\s^{k+1}, \d D_\s^{k+1}]\}_\s$ may be dependent in $H_{k+1}(U, \d U)$. 

We claim that any element $h_U \in H_{k+1}(U, \d U)$ can be written as a linear combination of relative cycles $\{[D_\s^{k+1}, \d D_\s^{k+1}] \in H_{k+1}(U, \d U)\}_\s$: 
\begin{eqnarray}\label{eq11.14}
h_U =  \sum_{\s \in\mathcal S^\bullet_F(DX), \, \dim \s = n-k} r_\s \cdot [D_\s^{k+1}, \d D_\s^{k+1}].
\end{eqnarray}
In fact, $H_{k+1}(U, \d U)$ can be recovered from the free $\Z$-module, generated by the elements $\{[D_\s^{k+1}, \d D_\s^{k+1}]\}_\s$, by factoring the module by the appropriate relations. 

To justify the presentation in (\ref{eq11.14}), we notice that, since  $DX^F_{-(k+1)}$ is a deformation retract of $U$, any $(n-k)$-dimensional homology class  in $U$ is  represented by a cycle $z$ in $DX^F_{-(k+1)}$, a combination $\sum_\s n_\s \cdot \s$ of the top strata of the $(n-k)$-dimensional $CW$-complex $DX^F_{-(k+1)}$. The algebraic intersection of $z \circ D_\s^{k+1} = n_\s$, since $\s \circ   D_\s^{k+1}  =1$ and $\s' \circ   D_\s^{k+1}  = 0$ for any $\s' \neq \s$ by the very choice of the normal disks $ D_\s^{k+1}$'s. For the $\R$-coefficients, by the Poincar\'{e} duality, any $(k+1)$-dimensional homology class $\tilde h \in H_{k+1}(U, \d U)$ is determined by its algebraic intersections with the cycles $z \in H_{n-k}(U) \approx H_{n-k}(DX^F_{-(k+1)})$.  Therefore any $\tilde h$ is a linear combination of $\{[D_\s^{k+1}, \d D_\s^{k+1}]\}_\s$. 

To simplify notations, put $[D_\s^{k+1}] =_{\mathsf{def}} [D_\s^{k+1}, \d D_\s^{k+1}]$.
\smallskip

We introduce the norm of $h_U$ by the formula
\begin{eqnarray}\label{eq11.15}
|[h_U]| =_{\mathsf{def}} \inf_{\{\text{representations of}\, h_U\}} \Big\{\sum_{\s \in\mathcal S^\bullet_F(DX), \, \dim \s = n-k} |r_\s|\Big\}, 
\end{eqnarray}
the minimum being taken over all representations of $h_U$ as in (\ref{eq11.14}). \smallskip 

Applying (\ref{eq11.13}) to any presentation of $h_U$ as in (\ref{eq11.14}), we get
 \begin{eqnarray}\label{eq11.16}
 \| h \|_\D \leq \sum_{\s \in\mathcal S^\bullet_F(DX), \, \dim \s = n-k} |r_\s| \cdot \big\| [D_\s^{k+1}]\big \|_\D^{\mathcal S^\bullet_F(U)}.
 \end{eqnarray}

Thus, 
 \begin{eqnarray}\label{eq11.16a}
 \| h \|_\D \leq \Theta \cdot \sum_{\s \in\mathcal S^\bullet_F(DX), \, \dim \s = n-k} |r_\s|, \nonumber
 \end{eqnarray}
where 
\begin{eqnarray}\label{eq11.17}
 \Theta  =_{\mathsf{def}} \max_{\s \in\mathcal S^\bullet_F(DX), \, \dim \s = n-k} \Big\{\big\| [D_\s^{k+1}]\big \|_\D^{\mathcal S^\bullet_F(U)}\Big\}.
\end{eqnarray}
Here we stratify the normal disk $D_\s^{k+1}$ by intersecting it with the $\mathcal S^\bullet_F(DX)$-stratification in the ambient space $DX$.  Employing Definition 3.1, 
the normal to $\s$ disk $D_\s^{k+1}$ can be viewed as a subspace of the model double space $D\mathsf E_{\omega_\s}$, stratified with the help of the model map $p_{\omega_\s}: \mathsf E_{\omega_\s} \to \mathsf T_{\omega_\s}$. In $D\mathsf E_{\omega_\s}$, the two copies of the space $\mathsf E_{\omega_\s}$, given by $\wp(u, \vec x) \leq 0$ (see (\ref{eq2.4})), are attached along the locus $\wp(u, \vec x) = 0$. The normal disk acquires its stratification from the ambient space $D\mathsf E_{\omega_\s}$. Hence, as a stratified topological space, $D_\s^{k+1}$  depends only on the position of the stratum $\s$ in the canonical stratification of $D\mathsf E_{\omega_\s}$. That position is determined by the appropriate combinatorial data, provided by formula (2.4). As a result, the stratified simplicial norms $\big\| [D_\s^{k+1}]\big \|_\D^{\mathcal S^\bullet_F(U)}$ take a \emph{finite set of values}, which depend only on the list of model maps $\{p_{\omega}: \mathsf E_{\omega} \to \mathsf T_{\omega}\}_\omega$.
So the constant $ \Theta > 0$ in (\ref{eq11.17}) is universal for any $X$ and its  shadow $F \in  \mathsf {Shad}(X,  \mathsf E \Rightarrow \mathsf T)$.
\smallskip 

Therefore, in line with formula (\ref{eq11.13}) and the definition in (\ref{eq11.15}), we get
\begin{eqnarray}\label{eq11.18}
\| h \|_\D \leq  \Theta \cdot |[h_U]|
\end{eqnarray}
where  $\Theta > 0$ is universal for all $X$, $F$, and $h$. 
\smallskip

Next, we are going to reinterpret the norm $|[h_U]|$ in terms of the differential complexes $ \mathbf C_\mho^\ast(DX, F)$ and $\mathbf C^\mho_\ast(DX, F)$ to make it ``more computable".

Let  $\{[\s]^\ast\}_\s$ be the basis of $\mathsf C_\mho^{n-k}(DX, F)$, dual to the basis $\{[\s]\}_\s$ in $\mathsf C^\mho_{n-k}(DX, F)$. 

The $|[\sim]|^\mho$-norm of a class $\eta^\ast \in \mathsf C^{n-k}_\mho(DX, F)/ \mathsf B^{n-k}_\mho(DX, F)$ is the quotient norm,  induced by the $l_1$-norm on $\mathsf C^{n-k}_\mho(DX, F)$; it is defined by the formula (which resembles the formula in (\ref{eq11.15})) 
$$|[\eta^\ast]|^\mho =_{\mathsf{def}} \inf_{\{\zeta^\ast\, \equiv \, \eta^\ast \mod\, \mathsf B^{n-k}_\mho(DX, F)\}} \Big\{\sum_{\s \in\mathcal S^\bullet_F(DX), \, \dim \s = j} |r_\s| \Big\},$$
where $$\zeta^\ast = \sum_{\s \in\mathcal S^\bullet_F(DX), \, \dim \s = j} r_\s\cdot [\s]^\ast.
\footnote{The unit balls in the $l_1$-norms on the spaces  $\mathsf C_\mho^j(DX, F)$ are convex closures of the vectors  $\{\pm[\s]^\ast\}_\s$, where $\s$ are strata of  dimension $j$.}$$  



\smallskip 

Recall that in (\ref{eq11.14}) we have considered an epimorphism $A: \mathsf C_{k+1}^\dagger \to H_{k+1}(U, \d U)$, where $\mathsf C_{k+1}^\dagger $ denotes the free module over $\R$ (or over  $\Z$), generated by the normal relative disks $\{D_\s^{k+1}\}_\s$. Put $\mathsf R_{k+1}^\dagger =_{\mathsf{def}} \ker(A)$, so that $$\mathsf C_{k+1}^\dagger/\mathsf R_{k+1}^\dagger \approx H_{k+1}(U, \d U).$$

On the other hand, the Poincar\'{e} duality $\mathcal D_U$ produces an isomorphism
$$B: H_{k+1}(U, \d U) \stackrel{\approx\mathcal D_U}{\longrightarrow} H^{n-k}(U) \stackrel{\approx}{\longrightarrow} \mathsf C^{n-k}_\mho(DX, F)/ \mathsf B^{n-k}_\mho(DX, F).$$ The composition $B \circ A$ takes each generator $D_\s^{k+1} \in  \mathsf C_{k+1}^\dagger$ to the class of $[\s]^\ast$ and identifies the space of relations $\mathsf R_{k+1}^\dagger$ with the space $\mathsf B^{n-k}_\mho(DX, F)$.



Let $\mathcal D_U(h_U)$ denotes the Poincar\'{e} dual in $U$ of the relative homology class $h_U$. Examining the definitions of the norm in (\ref{eq11.15}) and of the norm $|[\sim]|^\mho$ and tracing the nature of the Poincar\'{e} duality (in terms of the intersections of relative and absolute cycles of complementary dimensions in $U$), we see that the norm $|[h_U]|$ in (\ref{eq11.15}) coincides with the norm $|[\mathcal D_U(h_U)]|^\mho$, where  
$$\mathcal D_U(h_U) \in H^{n-k}(DX^F_{-(k+1)}) \approx  \mathsf C^{n-k}_\mho(DX, F)/ \mathsf B^{n-k}_\mho(DX, F).$$

Therefore, combining this observation with (\ref{eq11.18}), we get the desired inequality: 
\begin{eqnarray}\label{eq11.19}
\|h\|_\D \leq \Theta \cdot |[\mathcal L_{k+1}^{F, \mho}(h)]|_\mho.
\end{eqnarray}

\quad 
\end{proof} 

Theorem 3.1 has a ``more geometric" interpretation (see Fig. 3). 

\begin{corollary}\label{cor11.2} Let a homology class $h \in H_{k+1}(DX; \Z)$ be realized my a singular pseudo-manifold\footnote{a compact simplicial complex whose singular set has codimension $2$ at least} $f: M \to DX$, $\dim(M) = k+1$.  

Under the hypotheses of Theorem \ref{th11.3a}, 
the number of intersections of $f(M)$ with the locus $DX^F_{-(k+1)}$ is greater than or equal to $\Theta^{-1} \cdot \|h\|_\D$, provided that $f$ is in general position with the subcomplex $DX^F_{-(k+1)} \subset DX$.
\end{corollary}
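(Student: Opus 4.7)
The plan is to translate the algebraic inequality of Theorem \ref{th11.3} into a geometric count, by reading the quotient norm $|[\mathcal{L}^{F,\mho}_{k+1}(h)]|_\mho$ via transverse intersections of $f(M)$ with the top-dimensional strata of $DX^F_{-(k+1)}$.

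First I would identify the relevant cocycle. The top-dimensional strata of $DX^F_{-(k+1)}$ are precisely the $(n-k)$-dimensional strata $\s \in \mathcal{S}^\bullet_F(DX)$; they generate the free module $\mathsf{C}^\mho_{n-k}(DX,F)$, with dual basis $\{[\s]^\ast\}_\s$ generating $\mathsf{C}_\mho^{n-k}(DX,F)$. Because $f:M \to DX$ is transverse to each such $\s$ (and the singular locus of $M$ has codimension at least $2$, hence generically misses the smooth $(n-k)$-manifold $\s$), the preimage $f^{-1}(\s)$ is a finite set of points, each with a well-defined local intersection sign; let $n_\s \in \Z$ be the algebraic intersection number and $N_\s \geq |n_\s|$ the geometric count. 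Standard intersection theory (as used in the proof of Theorem \ref{th11.3} via the normal disks $D_\s^{k+1}$) identifies the cocycle $\sum_\s n_\s \, [\s]^\ast$ with a representative of $\mathcal{L}^{F,\mho}_{k+1}(h) \in \mathsf{C}^{n-k}_\mho(DX,F)/\mathsf{B}^{n-k}_\mho(DX,F)$: this is just the statement that Poincar\'e duality in the regular neighborhood $U$ of $DX^F_{-(k+1)}$ pairs a cycle with the basis $\{[D^{k+1}_\s]\}_\s$ by counting its intersections with the normal disks, which in turn compute intersections of $f(M)$ with the $\s$'s.

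Next I would bound the quotient norm from above by the geometric count. By the definition of $|[\,\cdot\,]|_\mho$ as the quotient of the $l_1$-norm on $\mathsf{C}^{n-k}_\mho(DX,F)$ in the basis $\{[\s]^\ast\}$,
\[
|[\mathcal{L}^{F,\mho}_{k+1}(h)]|_\mho \; \leq \; \Bigl\| \sum_\s n_\s \, [\s]^\ast \Bigr\|_{l_1} \;=\; \sum_\s |n_\s| \;\leq\; \sum_\s N_\s \;=\; \#\bigl(f(M)\cap DX^F_{-(k+1)}\bigr).
\]
Combining this with the inequality $\|h\|_\D \leq \Theta \cdot |[\mathcal{L}^{F,\mho}_{k+1}(h)]|_\mho$ supplied by Theorem \ref{th11.3} yields $\#(f(M)\cap DX^F_{-(k+1)}) \geq \Theta^{-1} \cdot \|h\|_\D$, as claimed.

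The main obstacle I expect is the verification that the cocycle $\sum_\s n_\s [\s]^\ast$ really represents $\mathcal{L}^{F,\mho}_{k+1}(h)$ when $M$ is only a pseudo-manifold rather than a closed manifold. One must argue that the codimension-$2$ singular set of $M$ can be pushed off the $(n-k)$-dimensional strata by a small perturbation (general position), so that $f(M)$ behaves as a cycle near every intersection point, and then check that the Poincar\'e--Lefschetz duality used in Theorem \ref{th11.3} (expressed through the normal disks $D_\s^{k+1}$) exchanges cap product with $[M]$ for the signed count $n_\s$. Once this identification is secured, the argument above is essentially a bookkeeping step.
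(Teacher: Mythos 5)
Your proposal is correct and essentially reproduces the paper's argument: perturb $f$ into transversal position, use the intersection points to write down an explicit representative of the localized class, bound the quotient $l_1$-norm by the raw count of intersections, and then feed this into the inequality $\|h\|_\D \leq \Theta \cdot |[\mathcal L_{k+1}^{F,\mho}(h)]|_\mho$ from Theorem~\ref{th11.3}. The only cosmetic difference is that you phrase the representative cohomologically as $\sum_\s n_\s[\s]^\ast$ (grouping intersections by stratum), whereas the paper phrases the same object homologically as $\sum_{x_\s} r_\s [D^{k+1}_{x_\s}]$ indexed by individual intersection points; these are identified by exactly the Poincar\'e--Lefschetz duality in $U$ that Theorem~\ref{th11.3} already establishes, so the two formulations coincide.
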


\begin{proof} Recall that any integral homology class $h \in H_{k+1}(DX; \Z)$ can be realized by a pseudo-manifold  $f: M \to DX$ (\cite{Hat}, pages 108-109).  

By a small perturbation of $f$, we can assume that $f(M)$ intersects transversally only with the pure $(n-k)$-dimensional strata in $DX^F_{-(k+1)}$ from the poset $\mathcal S^\bullet_F(DX)$. 

Consider the transversal intersections from the set $f(M) \cap DX^F_{-(k+1)}$. Then, for a sufficiently narrow regular neighborhood $U$ of $DX^F_{-(k+1)}$, the localized class $h_U$ has a representation 
$$h_U =  \sum_{x_\s \in f(M) \cap DX^F_{-(k+1)}} r_\s \cdot [D_{x_\s}^{k+1}],$$
where the relative disk  $(D_{x_\s}^{k+1}, \d D_{x_\s}^{k+1}) \subset (U, \d U) \cap f(M)$ and $r_\s = \pm 1$. By (\ref{eq11.19}), the norm $$|[ h_U ]| \leq \sum_{x_\s \in f(M) \cap DX^F_{-(k+1)}} 1 \; = \; \#\Big( f(M) \cap DX^F_{-(k+1)}\Big).$$ 
Again, by  (\ref{eq11.19}), we get  $\| h \|_\D \leq \Theta \cdot \#\Big( f(M) \cap DX^F_{-(k+1)} \Big)$. Thus 
\begin{eqnarray}\label{eq11.20}
\#\Big( f(M) \cap DX^F_{-(k+1)} \Big) \geq \Theta^{-1} \cdot \| h \|_\D,
\end{eqnarray}
where the positive constant $\Theta^{-1}$ depends only on $n$, the poset $\mathcal S$, and the list of model maps $\mathsf E \Rightarrow \mathsf T$ in the way that is described in formula (\ref{eq11.17}). 
\end{proof}
\smallskip

In order to get similar results about \emph{absolute} homology classes $h \in H_{k+1}(X)$, we consider the $F$-induced filtration
$$X_{-0}^{F\circ} \supset X_{-2}^{F\circ} \supset \dots \supset X_{-n}^{F\circ}$$
of $\mathsf{int}(X)$ by the connected components  of the strata $\{X^\circ(F, \omega)\}$ as in (3.6) and the relative homology/cohomology groups 
\begin{eqnarray}\label{eq11.8a} 
\{\mathsf C^\mho_j\big(X, F^\circ \big) =_{\mathsf{def}} H_j\big(X^{F \bullet}_{j -n - 1},\, X^{F \bullet }_{j-n} \cup (X^{F \bullet}_{j -n - 1}  \cap \d X); \, \mathcal A\big)\}_j, \nonumber \\  
\{\mathsf C_\mho^j\big(X, F^\circ\big) =_{\mathsf{def}} H^j\big(X^{F \bullet}_{j -n - 1},\, X^{F \bullet }_{j-n} \cup (X^{F \bullet}_{j -n - 1}  \cap \d X); \, \mathcal A\big\}_j . 
\end{eqnarray}
They are designed to emphasize the role of the connected components of $\{X^\circ(F, \omega)\}$ from the stratification  $\mathcal S_F^\circ(X)$ of $\mathsf{int}(X)$. 


Using the excision property, the long exact sequences of the triples $$\{X^{F \bullet}_{-j +1} \cup \d X \supset  X^{F \bullet}_{-j} \cup \d X \supset X^{F \bullet}_{-j -1} \cup \d X\}_j$$ help to organize the groups in  (\ref{eq11.8a}) into differential complexes  $\mathbf C^\mho_\ast(X, F^\circ)$ and $\mathbf C_\mho^\ast(X, F^\circ)$, similar to the ones in (\ref{eq11.8}) and (\ref{eq11.9}).   
\smallskip


Employing the Poincar\'{e} duality $\mathcal D$ again, we introduce the localization homomorphism
$$\mathcal M^F_{k+1}: H_{k+1}(X) \stackrel{\approx\mathcal D}{\longrightarrow} H^{n-k}(X, \d X) \stackrel{i^\ast}{\rightarrow}  H^{n-k}\big(X^{F \bullet}_{-(k+1)},  X^{F \bullet}_{-(k+1)} \cap \d X\big)$$
whose target can be identified with the quotient $\mathsf C^{n-k}_\mho(X, F^\circ)/ \mathsf B^{n-k}_\mho(X, F^\circ).$ This isomorphism can be validated in a way that is similar to the one that led  to (\ref{eq11.11}). Indeed, put $A =_{\mathsf{def}} X_{-(k+1)}^{F\bullet}$, $B =_{\mathsf{def}} (X_{-(k+1)}^{F\bullet} \cap \d X) \cup X_{-(k+2)}^{F\bullet}$, and $C =_{\mathsf{def}}  (X_{-(k+1)}^{F\bullet} \cap \d X)$.  Consider the fragment $J: H^{n-k}(A, B) \to H^{n-k}(A, C)$ of the long exact sequence of the triple $A \supset B \supset C$. 
By definition, $H^{n-k}(A, C)$ is the target of $\mathcal M^F_{k+1}$, and $H^{n-k}(A, B) = \mathsf C_\mho^{n-k}(X, F^\circ)$. Since $\dim(B) < n-k$,  $H^{n-k}(B, C) = 0$; so $J$ is an epimorphism. 

Similar arguments, which involve a different triple of spaces, imply that $\ker J$ can be identified with $\textup{im} \big(\d^\ast:  \mathsf C_\mho^{n-k-1}(X, F^\circ) \to \mathsf C_\mho^{n-k}(X, F^\circ)\big)$.


As a result, for each shadow $F$, we get a localization of the  Poincar\'{e} duality: 
\begin{eqnarray}\label{eq11.11a}
\qquad \qquad\mathcal M_{k+1}^{F, \mho}: H_{k+1}(X) \stackrel{\approx\mathcal D}{\longrightarrow} H^{n-k}(X, \d X) \stackrel{I^\ast}{\rightarrow} 
\mathsf C^{n-k}_\mho(X, F^\circ)/ \mathsf B^{n-k}_\mho(X, F^\circ). 
\end{eqnarray}

As before, the target space of  $\mathcal M_{k+1}^{F, \mho}$ admits the $|[\sim]|_\mho$ quotient norm.  It is induced by the $l_1$-norm on the based space $\mathsf C^{n-k}_\mho(X, F^\circ)$, the base being indexed by the connected components of $\{X^\circ(F, \omega)\}_\omega$. \smallskip

These considerations lead to a twin of Theorem \ref{th11.3}.
\begin{theorem}[{\bf The $\mathbf 2^{nd}$ Amenable Localization of the Poincar\'{e} Duality}]\label{th11.3a} \hfill\break
Let $X$ be a compact connected $(n+1)$-dimensional $\mathsf{PL}$-manifold with a nonempty boundary.  

Assume that for each connected component of the boundary $\d X$, the image of its fundamental group in $\pi_1(X)$ is an amenable group.

Then, there exists an universal constant $\Theta^\circ > 0$ such that, for every shadow  $F \in \hfill\break \mathsf {Shad}(X,  \mathsf E \Rightarrow \mathsf T)$, the space $\mathsf C^{n-k}_\mho(X, F^\circ)/ \mathsf B^{n-k}_\mho(X, F^\circ)$ admits a norm $|[\; \; ]|_\mho$ so that 
\begin{eqnarray}\label{eq11.12}
\|h\|_\D \leq \Theta^\circ \cdot |[\mathcal M_{k+1}^{F, \mho}(h)]|_\mho 
\end{eqnarray}  
for all $h \in H_{k+1}(X)$. Here  the Poincar\'{e} duality localizing operator $\mathcal M_{k+1}^{F, \mho}$ is introduced in  (\ref{eq11.11a}), 
and the constant $\Theta^\circ > 0$ depends only on $n$, the poset $\mathcal S$, and the list of model maps $\{\mathsf E_\omega \to \mathsf T_\omega\}_{\omega \in \mathcal S}$ in the way that is described in formula (\ref{eq11.17a}). 
\end{theorem}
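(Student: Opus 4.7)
The argument will follow the template of Theorem \ref{th11.3} with three adaptations that account for the shift from $DX$ to $X$: the classifying map now targets $K(\pi_1(X), 1)$; Poincar\'{e}--Lefschetz duality on the pair $(X, \d X)$ replaces Poincar\'{e} duality on the closed manifold $DX$; and the normal disks representing the localized class are confined to the \emph{interior} strata $\{X^\circ(F, \omega)\}$. To bring the problem within the scope of Gromov's Localization Lemma \ref{lem11.1}, which requires a closed ambient, I would use the doubling trick. Let $\pi: DX \to X$ be the fold induced by $\tau$ (so that $\pi \circ i = \textup{id}_X$ for the inclusion $i: X \hookrightarrow DX$), and let $\b: X \to K(\pi_1(X), 1)$ be a classifying map. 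Set $\hat\b = \b \circ \pi: DX \to K(\pi_1(X), 1)$. For $h \in H_{k+1}(X)$, put $\tilde h = i_\ast(h) \in H_{k+1}(DX)$. Then $\hat\b_\ast(\tilde h) = \b_\ast(h)$, and because $\b$ induces an isomorphism on $\pi_1$, Gromov's invariance of the simplicial semi-norm gives $\|\hat\b_\ast(\tilde h)\|_\D = \|h\|_\D$.

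Next I would verify the amenability condition of Lemma \ref{lem11.1} for $\hat\b$ and the stratification $\mathcal S^\bullet_F(DX)$. For $\s \subset \d X$, the restriction $\hat\b|_\s = \b|_\s$ factors through the boundary inclusion, so its image in $\pi_1(X)$ is amenable by hypothesis. For $\s \subset X^\circ(F, \omega)$, the section trick of Theorem \ref{th11.3}---extending $\s$ by the image of a local section $\rho: F(\s) \to X^\partial(F, \omega)$---exhibits $\s$ as homotopy equivalent to a subcomplex of $\d X$, reducing matters to the boundary case. For $\s \subset \tau(X^\circ(F, \omega))$, we have $\pi(\s) = \tau(\s) \subset X^\circ(F, \omega)$ and $\pi|_\s$ is a homeomorphism, so $\hat\b_\ast(\pi_1(\s)) = \b_\ast(\pi_1(\tau(\s)))$, reducing to the interior case. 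Applying Lemma \ref{lem11.1} with $V = U \cup \tau(U)$, where $U$ is a regular neighborhood of $X^{F\bullet}_{-(k+1)}$ in $X$, produces
$$\|h\|_\D \;=\; \|\hat\b_\ast(\tilde h)\|_\D \;\leq\; \|\tilde h_V\|_\D^{\mathcal S^\bullet_F(V)}.$$

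To convert this bound into $\Theta^\circ \cdot |[\mathcal M_{k+1}^{F,\mho}(h)]|_\mho$, I would mimic the normal-disk expansion of Theorem \ref{th11.3}, but inside $X$ rather than $DX$. Because $h$ is an \emph{absolute} class in $X$, a general-position representative can be taken disjoint from $\d X$; its algebraic intersection numbers with the $\tau$-copy strata $\tau(X^\circ(F, \omega))$ and with the boundary strata $X^\partial(F, \omega)$ then vanish identically. Hence $\tilde h_V$ admits presentations $\tilde h_V = \sum_\s r_\s \cdot [D^{k+1}_\s]$ with $\s$ ranging only over connected interior strata $\s \subset X^\circ(F, \omega)$ of dimension $n-k$, each $D^{k+1}_\s$ a normal disk in $U$ relative to $\d U \setminus \d X$. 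The Poincar\'{e}--Lefschetz duality $H_{k+1}(U, \d U \setminus \d X) \cong H^{n-k}(U, U \cap \d X)$ combined with the deformation retract $U \simeq X^{F\bullet}_{-(k+1)}$ identifies $\inf \sum_\s |r_\s|$ with the quotient norm $|[\mathcal M_{k+1}^{F,\mho}(h)]|_\mho$. The stratified simplicial norm $\|[D^{k+1}_\s]\|_\D^{\mathcal S^\bullet_F(V)}$ of each normal disk depends only on the combinatorial pattern $\omega$ locating $\s$ inside the local model $D\mathsf E_\omega$, so its maximum over the finite catalog of patterns furnishes the universal constant $\Theta^\circ$ announced in the statement. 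The main obstacle, in my view, is this last identification---matching the relations among the interior normal disks in $H_{k+1}(U, \d U \setminus \d X)$ with the coboundary subspace $\mathsf B^{n-k}_\mho(X, F^\circ)$ under Poincar\'{e}--Lefschetz duality; this requires a careful diagram chase through the triples $A \supset B \supset C$ of the discussion preceding (\ref{eq11.11a}), suitably localized to the neighborhood $U$ and checked compatibly with the flat boundary faces contributed by the strata in $X^\partial(F, \omega)$.
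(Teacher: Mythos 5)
Your proposal is correct and reaches the same estimate, but it routes the pivotal application of Gromov's Localization Lemma differently from the paper. The paper applies Lemma~\ref{lem11.1} \emph{directly} to $X$ with $\a: X \to K(\pi_1(X), 1)$, takes $U$ to be a regular neighborhood of $X^{F\circ}_{-(k+1)}$ inside $\mathsf{int}(X)$, observes that a cycle representing an absolute class $h \in H_{k+1}(X)$ can be pushed off $\d X$ so that its localization is supported only on normal disks to the interior strata, and compresses the remaining bookkeeping into the sentence ``the rest of the arguments are identical with the ones from the proof of Theorem~\ref{th11.3}.'' You instead double first, set $\hat\b = \b \circ \pi: DX \to K(\pi_1(X), 1)$ so that $\|\hat\b_\ast(i_\ast h)\|_\D = \|\b_\ast h\|_\D = \|h\|_\D$, and apply the Lemma on the closed manifold $DX$ with $V = U \cup \tau(U)$. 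The trade-off is real: your route respects the closed-manifold hypothesis of Lemma~\ref{lem11.1} as stated (the paper's direct application to a manifold with boundary tacitly invokes a relative version of the Lemma, or leans on the detailed treatment in \cite{AK}), but in exchange you must verify amenability for the $\tau$-mirrored strata --- which you do correctly via $\pi(\s) = \tau(\s) \subset X^\circ(F,\omega)$ --- and then argue that the localized class $\tilde h_V$ admits a disk presentation supported only on the interior strata, which you extract from the fact that the intersection numbers of the pushed-in cycle with the boundary and mirrored strata vanish. Both your write-up and the paper defer the final identification of the disk norm with the quotient norm $|[\mathcal M_{k+1}^{F,\mho}(h)]|_\mho$ to the mechanism established in Theorem~\ref{th11.3}; you are candid that this is where the remaining care lies. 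In short: same estimate, different scaffolding, yours being slightly more conservative about the ambient-manifold hypothesis in the Localization Lemma.
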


\begin{proof}  When  $h$  is represented  by a singular cycle whose image is contained in $\mathsf{int}(X)$,  it  interacts only with the strata from $\mathcal S_F^\circ(X)$ (and misses the strata from $\mathcal S_F^\bullet(\d X)$). Therefore,  we can pick a regular neighborhood $U$ of $X^{F \circ}_{-(k+1)}$ in $X$ so small that the localized class $h_U \in H_{k+1}(U, \d U)$ is a linear combination of a disjoint union of disks $\{(D_{x_\s}^{k+1}, \d D_{x_\s}^{k+1})\}$, normal to the strata $\s$ in $X^{F \circ}_{-(k+1)}$.

We apply the Localization Lemma \ref{lem11.1} to the image of the class $h$ under the classifying map $\a: X \to K(\pi_1(X), 1)$. By the argument that we used to prove Theorem \ref{th11.3}, the images of the fundamental groups of the  strata from $\mathcal S_F^\circ(X)$ are amenable in $\pi_1(X)$ .

Since only the disks  $\{D_{x_\s}^{k+1}\}_{\s \in \mathcal S_F^\circ(X)}$ participate in the localization of $h$ to $U$, the constant $\Theta^{-1}$ in the estimate (\ref{eq11.20}) can be improved by replacing $\Theta$ from formula (\ref{eq11.17}) by a smaller positive constant
\begin{eqnarray}\label{eq11.17a}
\Theta^\circ  =_{\mathsf{def}} \max_{\s \in\mathcal S^\circ_F(X), \, \dim \s = n-k} \Big\{\big\| [D_\s^{k+1}]\big \|_\D^{\mathcal S^\bullet_F(U)}\Big\}
\end{eqnarray}
which has a smaller universal upper bound, also depending  on $\mathcal S$, $n$, and $k$ only. 

The rest of the arguments are identical with the ones from the proof of Theorem \ref{th11.3}.
\end{proof}

Retracing the proof of Theorem \ref{th11.3a}, we get the following


\begin{corollary}\label{cor11.3} Under the amenability hypotheses of Theorem \ref{th11.3a},  if $h \in H_{k+1}(X; \Z)$ is represented by a singular pseudo-manifold $f: M \to X$, then $(\Theta^\circ)^{-1} \cdot \|h\|_\D$ gives a lower bound of the number of transversal intersections of the absolute cycle $f(M)$ with the locus $X^{F \circ}_{-(k+1)} =_{\mathsf{def}} \mathsf{int}(X) \cap X^F_{-(k+1)}$.   \hfill $\diamondsuit$
\end{corollary}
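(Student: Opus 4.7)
The plan is to mimic, almost verbatim, the proof of Corollary \ref{cor11.2}, but replacing the localization machinery of Theorem \ref{th11.3} by that of Theorem \ref{th11.3a} so that everything takes place in $X$ (rather than $DX$) and only the ``interior'' strata $\{X^\circ(F,\omega)\}$ participate.

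First, invoke the pseudo-manifold representability of integral homology classes (Hatcher, pp.~108--109) to fix a pseudo-manifold $f:M\to X$ of dimension $k+1$ realizing $h\in H_{k+1}(X;\Z)$. Since $h$ is an \emph{absolute} class, a small homotopy lets us assume $f(M)\subset \mathsf{int}(X)$; then a further $C^0$-small perturbation puts $f$ in general position with respect to the stratification $\mathcal S^\circ_F(X)$, so that $f(M)$ meets the subcomplex $X^{F\circ}_{-(k+1)}\subset\mathsf{int}(X)$ only along its top-dimensional pure strata, and only transversally at finitely many points.

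Second, pick a regular neighborhood $U$ of $X^{F\circ}_{-(k+1)}$ in $\mathsf{int}(X)$, narrow enough that (a) $U\cap\partial X=\emptyset$ and (b) $U\cap f(M)$ is a disjoint union of relative disks $(D_{x_\s}^{k+1},\partial D_{x_\s}^{k+1})\subset(U,\partial U)$, one for each transversal intersection point $x_\s\in f(M)\cap\s$ with $\s$ a stratum of dimension $n-k$ in $\mathcal S^\circ_F(X)$. The localized class $h_U\in H_{k+1}(U,\partial U)$ then has the tautological presentation
\[
h_U \;=\; \sum_{x_\s\in f(M)\cap X^{F\circ}_{-(k+1)}} r_\s\cdot [D_{x_\s}^{k+1}], \qquad r_\s=\pm 1,
\]
so that the quotient norm of Definition \eqref{eq11.15} (adapted to the interior setting) satisfies
\[
|[h_U]| \;\leq\; \sum_{x_\s} |r_\s| \;=\; \#\bigl(f(M)\cap X^{F\circ}_{-(k+1)}\bigr).
\]

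Third, apply Theorem \ref{th11.3a} to the class $h$: under the amenability hypothesis on $\pi_1(\partial X)\to\pi_1(X)$, combined with the amenability of $\b_\ast\pi_1(\s)$ for every $\s\in\mathcal S^\circ_F(X)$ (which was established in the proof of Theorem \ref{th11.3} via the fiber-bundle structure supplied by the fourth bullet of Definition 3.1), the Gromov Localization Lemma \ref{lem11.1} applied to the classifying map $\a:X\to K(\pi_1(X),1)$ yields
\[
\|h\|_\D \;\leq\; \Theta^\circ\cdot |[\mathcal M_{k+1}^{F,\mho}(h)]|_\mho \;=\; \Theta^\circ\cdot|[h_U]|,
\]
the last equality being the identification of $|[h_U]|$ with $|[\mathcal D_U(h_U)]|^\mho$ under the Poincar\'e duality $\mathcal D_U$ in the interior, exactly as in the final paragraph of the proof of Theorem \ref{th11.3}. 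Combining the two displayed inequalities gives
\[
\#\bigl(f(M)\cap X^{F\circ}_{-(k+1)}\bigr) \;\geq\; (\Theta^\circ)^{-1}\cdot \|h\|_\D,
\]
with $\Theta^\circ$ the universal constant of formula \eqref{eq11.17a}.

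The only subtlety I anticipate lies in verifying that pushing $f(M)$ into $\mathsf{int}(X)$ does not lose transversal intersections with the \emph{boundary portion} of $X^F_{-(k+1)}$ in a way that would weaken the bound: this is precisely why the statement of Corollary \ref{cor11.3} uses $X^{F\circ}_{-(k+1)}$ rather than $X^F_{-(k+1)}$, and why the constant $\Theta^\circ$ of Theorem \ref{th11.3a}---obtained by maximizing stratified simplicial norms of normal disks only over interior strata---replaces the larger constant $\Theta$ of Theorem \ref{th11.3}. Everything else is a bookkeeping transcription of the $DX$-argument to the $X$-setting.
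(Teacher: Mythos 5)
Your proof is correct and follows exactly the route the paper intends: the paper gives no explicit proof of this corollary, merely the remark ``Retracing the proof of Theorem~\ref{th11.3a}, we get the following,'' and your argument supplies precisely that retracing---push $f(M)$ into $\mathsf{int}(X)$, perturb for transversality, localize to a thin regular neighborhood of $X^{F\circ}_{-(k+1)}$, read off the tautological $\pm 1$-presentation of $h_U$, and invoke inequality (\ref{eq11.12}) with the constant $\Theta^\circ$ of (\ref{eq11.17a}), mirroring the proof of Corollary~\ref{cor11.2}. Your closing remark about why the interior locus $X^{F\circ}_{-(k+1)}$ and the smaller constant $\Theta^\circ$ appear is an accurate reading of the difference between Theorems~\ref{th11.3} and~\ref{th11.3a}.
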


For a pair of topological spaces  $Z \supset W$ and an integer $j \geq 0$, consider the subspace/subgroup  $\mathcal K^{\D = 0}_j(Z, W) \subset H_j(Z, W),$ formed by the homology classes  $h \in H_j(Z, W)$ whose simplicial semi-norm  $\| h\|_\D  = 0$. Then $\| \sim \|_\D$ becomes a norm on the quotient space $$H^{\mathbf \D}_j(Z, W) =_{\mathsf{def}} H_j(Z, W)/\mathcal K^{\D = 0}_j(Z, W).$$

Note that if $Z$ admits a continuous self map $\phi: (Z, W) \to (Z, W)$ whose action $\phi_\ast: H_j(Z, W) \to H_j(Z, W)$ on homology  has an eigen-element $h$ such that $\phi_\ast(h) = \lambda \cdot h$ for a scalar $\lambda$, subject to $|\lambda| >  1$, then $$\| h\|_\D  \geq \| \phi_\ast(h)\|_\D = |\lambda|\cdot \| h\|_\D.$$ Thus  $\| h\|_\D = 0$; so such an element $h \in H_j(Z, W)$ dies in the quotient $H^{\mathbf \D}_j(Z, W)$.

In fact, the construction of $H^{\mathbf \D}_1(\sim)$ always produces a trivial result: for any $Z$, $H_1^{\mathbf \D}(Z) = 0$. However, if $Z$ is a closed surface of genus $g > 1$, then $H_2^{\mathbf \D}(Z) \neq 0$. Moreover, when $Z$ is a product of many closed surfaces of genera $g(M_i) \geq 2$, then $H_2^{\mathbf \D}(Z) \neq 0$ is rich.
\smallskip

If $f: Z \to Y$ is a continuous map of topological spaces, then $\| h \|_\D \geq \| f_\ast(h) \|_\D$. Therefore, $f$ induces a continuous  linear map of normed spaces $f_\ast: H^{\mathbf \D}_j(Z)  \to H^{\mathbf \D}_j(Y)$ whose operator norm is $\leq 1$.  It is not difficult to verify that when $f$ is a homotopy equivalence, then this map $f_\ast$ is an isometry (in the simplicial norm) between the normed spaces $H^{\mathbf \D}_j(Z)$ and $H^{\mathbf \D}_j(Y)$. In particular, the shape of the unit ball in $H^{\mathbf \D}_j(Z)$ is an invariant of the homotopy type of $Z$. 

We can take this observation one step further. The Mapping Theorem from \cite{Gr}, section 3.1, implies that the classifying map $f: Z \to K(\pi_1(Z), 1)$ induces an \emph{isometry} $f_\ast: H^{\mathbf \D}_j(Z) \to H^{\mathbf \D}_j \big(K(\pi_1(Z), 1)\big)$ (so that $f_\ast$ is a monomorphism). 
\smallskip

\noindent{\bf Question 3.1.} For a given group $\pi$, how to describe the shape of the unit ball $B_j$ in the normed space $H^{\mathbf \D}_j \big(K(\pi, 1); \R\big)$ in terms of $\pi$ (say, in terms of its subgroups or in terms of the representation theory)? Is $B_j$ a polyhedron for a finitely-presented $\pi$? 
\hfill $\diamondsuit$
\smallskip

It turns out that formula (\ref{eq11.19}) gives a nontrivial lower boundary for the number of strata $\s \in \mathcal S^\bullet_F(DX)$ of dimension $n- k$ for \emph{any} shadow $F \in  \mathsf {Shad}(X,  \mathsf E \Rightarrow \mathsf T)$. This  universal boundary is constructed in terms of the group $H^{\mathbf \D}_{k+1}(DX)$. 

The next theorem is also an expression of  the amenable localization, coupled with the Poincar\'{e} duality. It can be viewed as a version of \emph{Morse Inequalities}, where a Morse function $f: X \to \R$ is replaced by a shadow $F$, the $f$-critical points are replaced by the $F$-induced strata in the double $DX$, and the homology of $X$ by the ``homology" $H^{\mathbf \D}_\ast(DX)$. 
It shows, in particular,  that the groups $H^{\mathbf \D}_{k+1}(DX)$ provide lower bounds of the ranks of the groups $\mathsf C^{n-k}_\mho(DX, F)$ from the differential complex in (3.9).

\begin{theorem}\label{th11.4} Let $X$ be a compact connected $(n+1)$-dimensional $\mathsf{PL}$-manifold with a nonempty boundary.
\begin{itemize}  
\item Let, for each connected component of the boundary $\d X$, the image of its fundamental group in $\pi_1(DX)$ be amenable.
Then, for each $k \in [-1, n]$ and every shadow $F \in  \mathsf {Shad}(X,  \mathsf E \Rightarrow \mathsf T)$  the  Poincar\'{e} duality localizing operator $\mathcal L_{k+1}^{F, \mho}$ from (\ref{eq11.11}) has the property 
\begin{eqnarray}\label{eq11.21}
\textup{rk}\big(\textup{im} (\mathcal L_{k+1}^{F, \mho})\big) \geq \textup{rk}\big(H^{\mathbf \D}_{k+1}(DX)\big).
\end{eqnarray}
As a result,  $\textup{rk}\big(\mathsf C^{n-k}_\mho(DX, F)\big)$---the number of strata $\s \in \mathcal S^\bullet_F(DX)$ of dimension $(n- k)$---is greater than or equal to $\textup{rk}\big(H^{\mathbf \D}_{k+1}(DX)\big)$. \smallskip

\item Let, for each connected component of the boundary $\d X$, the image of its fundamental group in $\pi_1(X)$ be  amenable.
Then, for for each $k \in [0, n]$ and every shadow $F \in  \mathsf {Shad}(X,  \mathsf E \Rightarrow \mathsf T)$,  the Poincar\'{e} duality localizing operator $\mathcal M_{k+1}^{F, \mho}$ from (\ref{eq11.11a}) has the property 
\begin{eqnarray}\label{eq11.21a}
\textup{rk}\big(\textup{im} (\mathcal M_{k+1}^{F, \mho})\big) \geq \textup{rk}\big(H^{\mathbf \D}_{k+1}(X)\big).
\end{eqnarray}
Thus,  $\textup{rk}\big(\mathsf C^{n-k}_\mho(X, F^\circ)\big)$---the number of strata $\s \in \mathcal S^\circ_F(X)$ of dimension $(n- k)$---is greater than or equal to $\textup{rk}\big(H^{\mathbf \D}_{k+1}(X)\big)$.
\end{itemize}
\end{theorem}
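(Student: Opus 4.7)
The plan is to deduce both bullets of Theorem \ref{th11.4} almost immediately from the amenable localization inequalities already established in Theorems \ref{th11.3} and \ref{th11.3a}. The key observation is that an inequality of the form $\|h\|_\D \leq C \cdot |[\mathcal L(h)]|_\mho$ forces $\ker(\mathcal L)$ to lie inside the subspace $\mathcal K^{\D=0}_{k+1}$ of classes with vanishing simplicial semi-norm; this is precisely the subspace by which one quotients to form $H^{\mathbf\D}_{k+1}$.

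For the first bullet, I would argue as follows. Applying Theorem \ref{th11.3} to a class $h \in H_{k+1}(DX;\R)$ with $\mathcal L_{k+1}^{F,\mho}(h)=0$, one obtains $\|h\|_\D \leq \Theta \cdot 0 = 0$, so $h \in \mathcal K^{\D=0}_{k+1}(DX)$. Hence
\[
\ker\bigl(\mathcal L_{k+1}^{F,\mho}\bigr) \;\subseteq\; \mathcal K^{\D=0}_{k+1}(DX),
\]
and consequently the canonical projection $H_{k+1}(DX) \to H^{\mathbf\D}_{k+1}(DX)$ factors through the image of $\mathcal L_{k+1}^{F,\mho}$. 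Equivalently, there is an induced surjection
\[
\textup{im}\bigl(\mathcal L_{k+1}^{F,\mho}\bigr) \twoheadrightarrow H^{\mathbf\D}_{k+1}(DX),
\]
which yields the rank inequality (\ref{eq11.21}). To deduce the stratum count, recall from the discussion preceding (\ref{eq11.11}) that $\mathsf C^{n-k}_\mho(DX,F)$ is free on the basis $\{[\s]\}$ indexed by the $(n-k)$-dimensional strata of $\mathcal S^\bullet_F(DX)$; since $\textup{im}(\mathcal L_{k+1}^{F,\mho}) \subseteq \mathsf C^{n-k}_\mho(DX,F)/\mathsf B^{n-k}_\mho(DX,F)$, passing to a quotient can only decrease rank, so
\[
\#\{\s \in \mathcal S^\bullet_F(DX):\dim\s = n-k\}
= \textup{rk}\bigl(\mathsf C^{n-k}_\mho(DX,F)\bigr)
\geq \textup{rk}\bigl(\textup{im}(\mathcal L_{k+1}^{F,\mho})\bigr)
\geq \textup{rk}\bigl(H^{\mathbf\D}_{k+1}(DX)\bigr).
\]

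For the second bullet, the argument is formally identical, with Theorem \ref{th11.3a} replacing Theorem \ref{th11.3} and $\mathcal M_{k+1}^{F,\mho}$ replacing $\mathcal L_{k+1}^{F,\mho}$. The amenability hypothesis is now on the images of the boundary-component fundamental groups in $\pi_1(X)$, which is exactly what Theorem \ref{th11.3a} requires. One obtains $\ker(\mathcal M_{k+1}^{F,\mho}) \subseteq \mathcal K^{\D=0}_{k+1}(X)$, a surjection $\textup{im}(\mathcal M_{k+1}^{F,\mho}) \twoheadrightarrow H^{\mathbf\D}_{k+1}(X)$, and the stratum count via the fact that $\mathsf C^{n-k}_\mho(X,F^\circ)$ is free on the components of $\{X^\circ(F,\omega)\}_\omega$ of dimension $n-k$.

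There is no substantial obstacle; the whole content of the theorem has been absorbed into the norm estimates of Theorems \ref{th11.3} and \ref{th11.3a}. The only point to be careful about is the bookkeeping between $\mathsf C^{n-k}_\mho$ and its quotient $\mathsf C^{n-k}_\mho/\mathsf B^{n-k}_\mho$: the Poincar\'e localizing map lands in the quotient, but the desired lower bound on the number of strata is on the rank of the larger free module, and the inequality goes in the right direction because quotients can only shrink rank. Once this is laid out, both inequalities (\ref{eq11.21}) and (\ref{eq11.21a}) follow in one line each.
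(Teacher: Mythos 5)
Your proof is correct and follows essentially the same line as the paper's: Theorem \ref{th11.3} (resp.\ \ref{th11.3a}) forces $\ker(\mathcal L_{k+1}^{F,\mho}) \subseteq \mathcal K^{\D=0}_{k+1}$, hence the quotient map to $H^{\mathbf\D}_{k+1}$ factors through the image, giving the rank inequality, and the stratum count then follows because the target quotient has rank at most that of the free module $\mathsf C^{n-k}_\mho$. The paper phrases the middle step as a comparison of ranks of quotients ($\textup{rk}(H_{k+1}/\mathcal K) \leq \textup{rk}(H_{k+1}/\ker)$), while you phrase it as an induced surjection onto $H^{\mathbf\D}_{k+1}$ --- the same argument in slightly different words.
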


\begin{proof} Put $\mathcal K^{\D = 0}_{k+1} =_{\mathsf{def}}  \mathcal K^{\D = 0}_{k+1}(DX)$. If $h \in H_{k+1}(DX)$ belongs to the kernel of the homomorphism $\mathcal L_{k+1}^{F, \mho}$, then evidently $|[\mathcal L_{k+1}^{F, \mho}(h)]|_\mho = |[0]|_\mho = 0$. By the inequality (\ref{eq11.18}), $\|h\|_\D = 0$. Therefore, $h \in \mathcal K^{\D = 0}_{k+1} $. In other words, $\ker (\mathcal L_{k+1}^{F, \mho}) \subset \mathcal K^{\D = 0}_{k+1}$. Hence,
$$\textup{rk}\big(H_{k+1}(DX)/ \mathcal K^{\D = 0}_{k+1}\big) \leq \textup{rk}\big(H_{k+1}(DX)/\ker (\mathcal L_{k+1}^{F, \mho})\big) = \textup{rk}\big(\textup{im} (\mathcal L_{k+1}^{F, \mho})\big).$$ 
So we get 
\begin{eqnarray}\label{eq11.22}
\textup{rk}\big(H^{\mathbf \D}_{k+1}(DX)\big) \leq \textup{rk}\big( \mathsf C^{n-k}_\mho(DX, F)/ \mathsf B^{n-k}_\mho(DX, F)\big)
 \nonumber \\ \leq \textup{rk}\big(\mathsf C^{n-k}_\mho(DX, F)\big),
\end{eqnarray}
the later rank is the number of strata $\s$ of dimension $n-k$ in the stratification $ \mathcal S^\bullet_F(DX)$.
\smallskip

Similar arguments, based on Theorem \ref{th11.3a}, prove that
\begin{eqnarray}\label{eq11.22a}
\textup{rk}\big(H^{\mathbf \D}_{k+1}(X)\big) \leq \textup{rk}\big( \mathsf C^{n-k}_\mho(X, F^\circ)/ \mathsf B^{n-k}_\mho(X, F^\circ)\big)
 \nonumber \\ \leq \textup{rk}\big(\mathsf C^{n-k}_\mho(X, F^\circ)\big).
\end{eqnarray}
\end{proof}

\noindent {\bf Remark 3.1.} Since $H^{\mathbf \D}_{1}(DX) = 0$ and $H^{\mathbf \D}_{1}(X) = 0$ for all $X$, the statements of Theorem \ref{th11.4} are vacuous for  $k = 0$. Also, for $k = n$, the statement in the second bullet is vacuous since $H^{\mathbf \D}_{n+1}(X) = 0$.

\smallskip

Since the classifying map $\b: DX \to K(\Pi, 1)$, where $\Pi = \pi_1(DX)$, induces an isomorphism of the fundamental groups, $\b_\ast: H_\ast(DX) \to H_\ast(K(\Pi, 1))$ is an isometry (see \cite{Gr}). As a result, the induced map $\tilde\b_\ast: H^{\mathbf\D}_\ast(DX) \to H^{\mathbf\D}_\ast(K(\Pi, 1))$ is a monomorphism. Therefore,
\emph{the best lower estimate} that formula (\ref{eq11.22}) could provide is equal to $$\textup{rk}\big(H^{\mathbf \D}_{k+1}\big(K(\Pi, \, 1)\big)\big) \geq \textup{rk}\big(H^{\mathbf \D}_{k+1}(DX)\big).$$ 
Similarly, with $\pi = \pi_1(X)$, \emph{the best lower estimate} that formula (\ref{eq11.22a}) could provide is equal to $$\textup{rk}\big(H^{\mathbf \D}_{k+1}\big(K(\pi, \, 1)\big)\big) \geq \textup{rk}\big(H^{\mathbf \D}_{k+1}(X)\big).$$ 
\hfill $\diamondsuit$

\noindent{\bf Question 3.2.}
Can one probe faithfully the shape of the unit ball $\tilde B_{k+1}$ in the quotient norm $\| \sim \|_\D$ on $H^{\mathbf \D}_{k+1}(DX)$ by counting the cardinalities $\#\Big( f(M) \cap DX^F_{-(k+1)} \Big)$ for various shadows $F$ and singular pseudo-manifolds $f: M \to DX$? Similar question can be posed about the shape of the unit ball $B_{k+1}$ in the quotient norm $\| \sim \|_\D$ on $H^{\mathbf \D}_{k+1}(X)$. 
\hfill $\diamondsuit$
\smallskip

In one interesting special case that deals with the strata of maximal codimension Theorems \ref{th11.3}-\ref{th11.4}  
can be made more explicit. The proposition below is a generalization of  Theorem 2 from \cite{AK}.

\begin{theorem}\label{th11.5} Let $X$ be a compact connected  and oriented $(n+1)$-dimensional $\mathsf{PL}$-manifold with a nonempty boundary. 

Assume that, for each connected component of the boundary $\d X$, the image of its fundamental group in $\pi_1(DX)$ is  amenable. 
Then there is a $X$-independent universal constant $\theta >  0$ such that, for any shadow $F \in  \mathsf {Shad}(X,  \mathsf E \Rightarrow \mathsf T)$,  the cardinality of the finite set $F(X)_{-n}$ satisfies the inequality 
\begin{eqnarray}\label{eq11.23}
 \#\big(F(X)_{-n}\big) \geq \theta \cdot \| [DX] \|_\D.
\end{eqnarray}
\end{theorem}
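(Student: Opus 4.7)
The plan is to feed Corollary \ref{cor11.2} the fundamental class $h = [DX] \in H_{n+1}(DX; \mathbb{Z})$, realized by the identity map $f = \textup{id}_{DX}: DX \to DX$ (a closed oriented manifold being in particular a pseudo-manifold). The amenability hypothesis on the boundary components is exactly the one required by Theorem \ref{th11.3}. Setting $k = n$, the subcomplex $DX^F_{-(n+1)}$ is the finite set of $0$-dimensional strata of $\mathcal S^\bullet_F(DX)$, and $f = \textup{id}$ is automatically in general position with this finite set. Corollary \ref{cor11.2} therefore delivers
\begin{equation*}
\#\big( DX^F_{-(n+1)} \big) \;\geq\; \Theta^{-1}\cdot \|[DX]\|_\Delta,
\end{equation*}
with $\Theta > 0$ the universal constant of formula (\ref{eq11.17}).

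Next, $\#\big( DX^F_{-(n+1)} \big)$ must be compared with $\#\big(F(X)_{-n}\big)$. Every vertex $y \in F(X)_{-n}$ has combinatorial type $\omega$ with $\mu'(\omega) = n$; the second bullet of Definition 3.1 forces the fiber $F^{-1}(y)$ to be either a closed segment with both endpoints on $\partial X$ or a singleton on $\partial X$. Hence every $0$-dimensional stratum of $\mathcal S^\bullet_F(DX)$ sitting over $y$ lies in $X^\partial(F,\omega) \subset \partial X$ (the interior of a segment fiber contributes a $1$-dimensional, not a $0$-dimensional, stratum of $\mathcal S^\bullet_F(DX)$). By the fourth bullet of Definition 3.1, the cover $X^\partial(F,\omega) \to K(\omega)$ is trivial of cardinality $\mu(\omega) - \mu'(\omega) = \mu(\omega) - n$, and each of these boundary points is $\tau$-fixed, so it contributes once to $DX$. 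Conversely, each $0$-dimensional stratum of $\mathcal S^\bullet_F(DX)$ projects under $F$ to a vertex of $F(X)_{-n}$. Consequently
\begin{equation*}
\#\big( DX^F_{-(n+1)} \big) \;=\; \sum_{y \in F(X)_{-n}} \big( \mu(\omega_y) - n \big) \;\leq\; C\cdot \#\big( F(X)_{-n} \big),
\end{equation*}
where $C := \max\{\mu(\omega) - n : \omega \in \mathcal S_n,\ \mu'(\omega) = n\}$ is finite (by the assumed finiteness of $\mathcal S_n$) and depends only on $n$, $\mathcal S$, and the model list $\mathsf E \Rightarrow \mathsf T$.

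Combining the two displays yields $\|[DX]\|_\Delta \leq \Theta C \cdot \#\big(F(X)_{-n}\big)$, so the theorem follows with the universal constant $\theta := (\Theta C)^{-1}$. All the analytic content is packaged inside Theorem \ref{th11.3} (equivalently, Corollary \ref{cor11.2}), which itself rests on the Gromov Localization Lemma \ref{lem11.1}; the rest of the argument is combinatorial bookkeeping about the local structure of a shadow near its top-codimension vertices. The one step that requires a moment of care is the identification of all $0$-dimensional strata of $\mathcal S^\bullet_F(DX)$ as boundary points of $X$---this prevents a $\tau$-doubling factor from polluting $C$ and is what keeps $C$ dependent on the model list alone, not on $X$.
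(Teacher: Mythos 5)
Your proof is correct and follows the same route as the paper: specialize the amenable localization result (Theorem \ref{th11.3}, here via Corollary \ref{cor11.2}) to $h = [DX]$ with $k=n$, then bound $\#\big(DX^F_{-(n+1)}\big)$ by $\kappa \cdot \#\big(F(X)_{-n}\big)$ where $\kappa = \max_{\{\omega \mid \mu'(\omega)=n\}}(\mu(\omega)-\mu'(\omega))$ is the maximal fiber cardinality of $F$ over $F(X)_{-n}$, which is exactly your $C$. The only cosmetic difference is that the paper applies inequality (\ref{eq11.18}) directly to the restriction $[DX]_U$ rather than routing through Corollary \ref{cor11.2}, and your justification that all $0$-dimensional strata of $\mathcal S^\bullet_F(DX)$ lie on $\partial X$ (so no $\tau$-doubling) is a useful explicit version of what the paper states implicitly via the equality $DX^F_{-(n+1)} = F^{-1}\big(F(X)_{-n}\big) \cap \partial X$.
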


\begin{proof} The argument is based on the proof of Theorem 3.1.  
Note that the fundamental class $[DX] \in H_{n+1}(DX)$, being restricted to the regular neighborhood  
$$U =_{\mathsf{def}} \coprod_{\{\s \in \mathcal S^\bullet_F(DX)| \; \mu'(\omega_\s) = n \}} D^{n+1}_\s$$ of the locus $DX^F_{-(n+1)}$, equals the sum $$\sum _{\{\s \in \mathcal S^\bullet_F(DX)| \; \mu'(\omega_\s) = n \}} [D^{n+1}_\s, \d D^{n+1}_\s].$$ Therefore, by (\ref{eq11.15}) ,  $ [|\, [DX]_U\, |] = \#\Big(DX^F_{-(n+1)}\Big)$. Employing inequality (\ref{eq11.19}), we get 
$$\| [DX] |_\D \leq \Theta \cdot \#\Big(DX^F_{-(n+1)}\Big) \leq  \Theta \cdot \kappa \cdot \#\big(F(X)_{-n}\big),$$
where 
$\kappa = _{\mathsf{def}} \max_{\{\omega|\, \mu'(\omega) = n\}} (\mu(\omega) - \mu'(\omega))$
is the maximal cardinality of the fibers of the map $$F:\; DX^F_{-(n+1)} = F^{-1}\big(F(X)_{-n}\big) \cap \d X   \longrightarrow  F(X)_{-n}.$$
Choosing $\theta = (\Theta \cdot \kappa)^{-1}$ completes the argument. \smallskip

Note that $X$ has no absolute fundamental cycle and that $X^{F\circ}_{-(n+1)} = \emptyset$; therefore applying the second bullet of Theorem \ref{th11.4} leads to a  tautology.  
\end{proof}

The next implication of Theorem  \ref{th11.4} reveals the groups $H^{\mathbf \D}_{k+1}(DX)$ and  $H^{\mathbf \D}_{k}(X)$ as \emph{obstructions} to the existence of globally $k$-convex shadows of $X$.

\begin{corollary}\label{cor11.4} Let $X$ be a compact connected $(n+1)$-dimensional $\mathsf{PL}$-manifold with a nonempty boundary.  
Assume that, for each connected component of the boundary $\d X$, the image of its fundamental group in $\pi_1(X)$ is an amenable group.

If $X$ is globally $k$-convex, then the simplicial semi-norm is trivial on $H_{j+1}(DX)$  and on $H_{j}(X)$ for all $j \geq k$. 

In particular, if an oriented $X$ admits a shadow $F \in \mathsf {Shad}(X,  \mathsf E \Rightarrow \mathsf T)$ such that $F(X)_{-n} = \emptyset$, then $\| [DX] \|_\D = 0$ and $H^{\mathbf \D}_n(X) = 0$. 
\end{corollary}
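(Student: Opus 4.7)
The plan is to leverage global $k$-convexity to force the relevant graded pieces of the chain complexes $\mathbf C^\mho_\ast(DX, F)$ and $\mathbf C^\mho_\ast(X, F^\circ)$ to vanish, then read off the conclusion from Theorem~\ref{th11.4}. First, by Definition 3.2 together with Remark 3.2, global $k$-convexity will furnish a shadow $F \in \mathsf{Shad}(X, \mathsf E \Rightarrow \mathsf T)$ with $F(X)_{-k} = \emptyset$, which means every nonempty pure stratum $K(\omega) \subset F(X)$ satisfies $\mu'(\omega) \leq k - 1$ and hence $\dim K(\omega) = n - \mu'(\omega) \geq n - k + 1$. Then I use the local triviality in the fourth bullet of Definition 3.1 to get $\dim X^\circ(F, \omega) = \dim K(\omega) + 1 \geq n - k + 2$ (whenever nonempty) and $\dim X^\partial(F, \omega) = \dim K(\omega) \geq n - k + 1$. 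Consequently every stratum of $\mathcal S^\circ_F(X)$ will have dimension $\geq n - k + 2$, whereas a stratum of $\mathcal S^\bullet_F(DX)$---a component of $X^\circ(F, \omega)$, $\tau(X^\circ(F, \omega))$, or $X^\partial(F, \omega)$---only enjoys the weaker bound $\geq n - k + 1$.

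Next I exploit that the free modules $\mathsf C^\mho_d(DX, F)$ and $\mathsf C^\mho_d(X, F^\circ)$ have ranks equal to the numbers of dim-$d$ strata in $\mathcal S^\bullet_F(DX)$ and in $\mathcal S^\circ_F(X)$ respectively, so the dimension bounds above yield
\[
\mathsf C^\mho_{n-j}(DX, F) = 0 \quad\text{and}\quad \mathsf C^\mho_{n-j+1}(X, F^\circ) = 0 \quad\text{for every } j \geq k.
\]
The footnote to Theorem~\ref{th11.3} upgrades the Corollary's amenability hypothesis (on images in $\pi_1(X)$) to the amenability hypothesis of the first bullet of Theorem~\ref{th11.4} (on images in $\pi_1(DX)$), so both bullets of that theorem apply. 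Substituting $k := j$ in the first bullet and $k := j - 1$ in the second then gives $\textup{rk}\, H^{\mathbf \D}_{j+1}(DX) = 0$ and $\textup{rk}\, H^{\mathbf \D}_j(X) = 0$ for every $j \geq k$, which is the required vanishing of the simplicial semi-norm.

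The ``in particular'' assertion is the specialization $k = n$: $H^{\mathbf \D}_{n+1}(DX) = 0$ forces $\|[DX]\|_\D = 0$, because the fundamental class represents an element of that quotient, and $H^{\mathbf \D}_n(X) = 0$ is the case $j = n$ of the main vanishing. The only real hazard is bookkeeping: one has to track the off-by-one shifts between the filtrations of $F(X)$, $X$, and $DX$ and remember that $\mathcal S^\circ_F(X)$ omits boundary strata, which is precisely what produces the different indexing $H^{\mathbf \D}_{j+1}(DX)$ versus $H^{\mathbf \D}_j(X)$ in the statement.
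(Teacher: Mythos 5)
Your proof is correct and follows essentially the same route as the paper's: both proofs reduce the corollary to the vanishing of the appropriate graded pieces $\mathsf C^\mho_\ast(DX,F)$ and $\mathsf C^\mho_\ast(X,F^\circ)$ and then read off the conclusion from Theorem~\ref{th11.4}, with the same off-by-one bookkeeping between the filtrations of $F(X)$, $X$, and $DX$. The only stylistic difference is that you phrase the emptiness argument via dimension bounds on the strata, while the paper phrases it by tracking which $F(X)$-strata contribute strata of a given dimension to $\mathcal S^\bullet_F(DX)$ and $\mathcal S^\circ_F(X)$; your explicit remark about the footnote upgrading the amenability hypothesis from $\pi_1(X)$ to $\pi_1(DX)$ is a useful detail that the paper leaves implicit.
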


\begin{proof} By Remark 3.2,  $F(X)_{-k} = \emptyset$ implies $F(X)_{-j} = \emptyset$ for all $j\geq k$.  Therefore $DX^F_{-(j+1)} = \emptyset$ for all $j \geq k$, since the $(n-j)$-dimensional strata from $\mathcal S_F^\bullet(DX)$ are contributed only by the strata of $F(X)$ of dimensions $(n-j)$ and $(n-j-1)$. The exception to this rule is provided by the $0$-dimensional strata from $\mathcal S_F^\bullet(DX)$: they are contributed by the points from $F(X)_{-n}$ only. 

In contrast, only the $(n-j)$-dimensional strata of $F(X)$ contributes to the $(n + 1- j)$-dimensional strata from $\mathcal S_F^\circ(X)$. So $F(X)_{-j} = \emptyset$ implies $X_{-j}^{F\circ} = \emptyset$. In turn, this implies $\mathsf C^{n+1 -j}_\mho(X, F^\circ) = 0$. Therefore, by (\ref{eq11.22a}), $H^{\mathbf \D}_{j}(X) = 0$ for all $j \geq k$.
\end{proof}

\noindent{\bf Definition 3.8.}
 Let $X$ be a compact connected $(n+1)$-dimensional $\mathsf{PL}$-manifold with a nonempty boundary, and $\mathcal A$ denotes an abelian group or a field. 
For any shadow $F \in  \mathsf {Shad}(X,  \mathsf E \Rightarrow \mathsf T)$ and each $j \in [0, n+1]$,
let  $$\Sigma c^j(X, F) =_{\mathsf{def}} \textup{rk}_\mathcal A \big(\mathsf C_\mho^j(DX, F)\big).$$ We call this integer the $j$-th \emph{suspension $\mho$-complexity} of the shadow  $F$.

Let $$\Sigma c^j_{\mathsf{shad}}(X) =_{\mathsf{def}} \min_{F \in  \mathsf {Shad}(X,  \mathsf E \Rightarrow \mathsf T)} \Sigma c^j(X, F).$$
We call  $\Sigma c^j_{\mathsf{shad}}(X)$ the $j$-th \emph{suspension shadow complexity} of $X$. \hfill $\diamondsuit$ 
\smallskip

These numbers can be organized into a sequence 
$$\mathbf{\Sigma c}_{\mathsf{shad}}(X) =_{\mathsf{def}} \big(\Sigma c^0_{\mathsf{shad}}(X),\, \Sigma c^1_{\mathsf{shad}}(X)\, \dots \, \Sigma c^{n+1}_{\mathsf{shad}}(X)\big).$$
Note that this optimal sequence may not be realizable by a single shadow! To avoid this fundamental difficulty, we will need a more manageable version of the previous definition.
\smallskip

\noindent{\bf Defintion 3.8.}
Let $X$ be a compact connected $(n+1)$-dimensional $\mathsf{PL}$-manifold with a nonempty boundary. For any shadow $F \in  \mathsf {Shad}(X, \mathsf E \Rightarrow \mathsf T)$, form the sequence 
$$
\mathbf{\Sigma c}(X, F) =_{\mathsf{def}} \big(\Sigma c^0(X, F),\, \Sigma c^1(X, F),\, \dots ,\, \Sigma c^{n+1}(X, F)\big).
$$
and take the \emph{lexicographic} minimum
$$\mathbf{\Sigma c}_{\mathsf{shad}}^{\mathsf{lex}}(X) =_{\mathsf{def}} \textup{lex.min}_{F \in  \mathsf {Shad}(X,  \mathsf E \Rightarrow \mathsf T)}\; \mathbf{\Sigma c}(X, F).$$

We denote by $\Sigma c_{\mathsf{shad}}^{\mathsf{lex},\, j}(X)$ the $(j+1)$ component of the vector $\mathbf{\Sigma c}_{\mathsf{shad}}^{\mathsf{lex}}(X)$ and call it the $j$-th \emph{lexicographic suspension shadow complexity} of $X$. \hfill $\diamondsuit$
\smallskip

\noindent{\bf Remark 3.2.}
By its very definition, the lexicographically optimal sequence of complexities is delivered by some shadow $F$!

Evidently, for each $j$, $\Sigma c_{\mathsf{shad}}^{\mathsf{lex},\, j}(X)\,  \geq \, \Sigma c^j_{\mathsf{shad}}(X).$
\hfill $\diamondsuit$
\smallskip

Theorem 3.4 
has an immediate implication. 

\begin{corollary}\label{cor11.5} Let $X$ be a compact connected $(n+1)$-dimensional $\mathsf{PL}$-manifold with a nonempty boundary.  
Assume that, for each connected component of the boundary $\d X$, the image of its fundamental group in $\pi_1(DX)$ is an amenable group. 

Then, for any $k \in [-1, n]$, the suspension shadow complexity of  $X$ satisfies the inequality
$$\Sigma c^{n-k}_{\mathsf{shad}}(X) \geq  \textup{rk}\big(H^{\mathbf \D}_{k+1}(DX)\big).$$ 
\hfill $\diamondsuit$
\end{corollary}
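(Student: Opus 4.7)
The plan is to deduce this corollary directly from the first bullet of Theorem 3.4, which is the substantive result; the corollary is essentially a packaging statement that rewrites that bound in terms of the minimized invariant $\Sigma c^{n-k}_{\mathsf{shad}}(X)$.

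First, I would fix an arbitrary shadow $F \in \mathsf{Shad}(X, \mathsf E \Rightarrow \mathsf T)$ and apply the first bullet of Theorem 3.4 under the stated amenability hypothesis on the components of $\partial X$ inside $\pi_1(DX)$. That bullet yields
\[
\textup{rk}\bigl(\mathsf C^{n-k}_\mho(DX, F)\bigr) \;\geq\; \textup{rk}\bigl(H^{\mathbf \D}_{k+1}(DX)\bigr),
\]
where the left-hand side counts the $(n-k)$-dimensional strata of the refined stratification $\mathcal S^\bullet_F(DX)$ of the double.

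Second, by Definition 3.8 this left-hand rank is precisely the $j$-th suspension $\mho$-complexity $\Sigma c^{n-k}(X, F)$. Since the right-hand side $\textup{rk}(H^{\mathbf \D}_{k+1}(DX))$ depends only on the homotopy type of $DX$ and not on the chosen shadow $F$, I can take the minimum over $F \in \mathsf{Shad}(X, \mathsf E \Rightarrow \mathsf T)$ on the left and preserve the inequality, obtaining
\[
\Sigma c^{n-k}_{\mathsf{shad}}(X) \;=\; \min_{F}\, \Sigma c^{n-k}(X, F) \;\geq\; \textup{rk}\bigl(H^{\mathbf \D}_{k+1}(DX)\bigr),
\]
which is the desired conclusion.

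There is essentially no obstacle here: the corollary is a direct repackaging of Theorem 3.4 once one recognizes that the rank bound is shadow-independent on the right. The only small item worth flagging in the write-up is that $\mathsf{Shad}(X, \mathsf E \Rightarrow \mathsf T)$ should be assumed nonempty (as per Remark 3.1), so that the minimum makes sense; otherwise the statement is vacuously true. For the edge cases $k = 0$ and $k = n$ noted in Remark 3.1 the lower bound degenerates to $0$ via $H^{\mathbf\D}_1(DX) = 0$, so the corollary is consistent with those cases even though it carries no content there.
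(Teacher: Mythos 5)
Your proof is correct and is exactly the argument the paper intends: the corollary is stated with no proof (the $\diamondsuit$ immediately follows, preceded by ``Theorem 3.4 has an immediate implication''), so filling in the deduction from the first bullet of Theorem \ref{th11.4}---that for every shadow $F$ one has $\textup{rk}\bigl(\mathsf C^{n-k}_\mho(DX, F)\bigr) \geq \textup{rk}\bigl(H^{\mathbf \D}_{k+1}(DX)\bigr)$, then identifying this rank with $\Sigma c^{n-k}(X,F)$ via Definition 3.8 (using that $\mathsf C_\mho^{n-k}$ and $\mathsf C^\mho_{n-k}$ are dual free modules of equal rank) and minimizing over $F$---is precisely the intended reasoning, and your caveats about nonemptiness of $\mathsf{Shad}(X,\mathsf E \Rightarrow \mathsf T)$ and the degenerate edge cases are appropriate.
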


Let us consider the sequence 
\begin{eqnarray}
\qquad \mathbf{rk}\big(H^{\mathbf \D}_\ast(DX)\big)  =_{\mathsf{def}} \;  \Big( \textup{rk}\big(H^{\mathbf \D}_{n+1}(DX)\big), \,  \textup{rk}\big(H^{\mathbf \D}_{n}(DX)\big), \, \dots , \,  \textup{rk}\big(H^{\mathbf \D}_{0}(DX)\big)\Big) \nonumber.
\end{eqnarray}


Then Corollary \ref{cor11.5} can be expressed in its compressed form as
\begin{eqnarray}\label{eq11.24}
\mathbf{\Sigma c}^{\mathsf{lex}}_{\mathsf{shad}}(X) \, \geq \, \mathbf{\Sigma c}_{\mathsf{shad}}(X) \, \geq \, \mathbf{rk}\big(H^{\mathbf \D}_\ast(DX)\big),
\end{eqnarray}
where the vectorial inequality is understood as the inequality among all the the corresponding components of the participating vectors.\smallskip

Let $X$ be a compact connected $(n+1)$-dimensional $\mathsf{PL}$-manifold with a nonempty boundary, and $F: X \to K$ its shadow. Recall that, by Definition 3.1,  
the cardinality of the fiber $F: X(F, \omega) \cap \d X \to K(\omega)$ depends only on $\omega \in \mathcal S$: it is the difference $\mu(\omega) - \mu'(\omega)$. 
\smallskip

\noindent{\bf Definition 3.9.}
Consider a filtered $CW$-complex $K \in \mathsf {Shad}(\mathsf T, n)$ and its filtration as in (\ref{eq11.2}). Put $$\kappa_{j}(n) =_{\mathsf{def}} \max_{\{\omega|\, \mu'(\omega) = n-j\}} \big(\mu(\omega) - \mu'(\omega)\big).$$

The \emph{weighted  $j$-th complexity} of $K$ is defined by the formula $$^\sharp c^j(K) =_{\mathsf{def}}  \kappa_j(n) \cdot c^j(K).$$
\hfill $\diamondsuit$

The next proposition helps to link the suspension complexities $\Sigma c^j(X, F)$ to the weighted ones $^\sharp c^j(F(X))$ and thus motivates Definition 3.9. 

\begin{corollary}\label{cor11.6} Let $X$ be a compact connected $(n+1)$-dimensional $\mathsf{PL}$-manifold with a nonempty boundary.  Assume that, for each connected component of the boundary $\d X$, the image of its fundamental group in $\pi_1(X)$ is an amenable group. 

Then, for any shadow $F \in  \mathsf {Shad}(X, \mathsf E \Rightarrow \mathsf T)$ and each $k \in [-1, n]$,
\begin{eqnarray}\label{eq11.25}
^\sharp c^{n-k}(F(X))  + 2\cdot\,  ^\sharp c^{n-k-1}(F(X))  \geq  \textup{rk}\big(H^{\mathbf \D}_{k+1}(DX)\big), \\
^\sharp c^{n-k-1}(F(X))  \geq  \textup{rk}\big(H^{\mathbf \D}_{k+1}(X)\big).  \nonumber
\end{eqnarray}
\end{corollary}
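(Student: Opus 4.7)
The plan is to deduce both inequalities directly from Theorem \ref{th11.4} by bounding the ranks $\textup{rk}\big(\mathsf C^{n-k}_\mho(DX,F)\big)$ and $\textup{rk}\big(\mathsf C^{n-k}_\mho(X,F^\circ)\big)$ from above in terms of the weighted complexities of $K =_{\mathsf{def}} F(X)$. First observe that amenability of the images $\pi_1(\partial_i X) \to \pi_1(X)$ automatically forces amenability of $\pi_1(\partial_i X) \to \pi_1(DX)$ (by the footnote to Theorem \ref{th11.3}), so both bullets of Theorem \ref{th11.4} are available; only the passage from abstract stratum-counts to the weighted $\mho$-complexities of $K$ remains to be done.

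For the first inequality I would identify $\textup{rk}\big(\mathsf C^{n-k}_\mho(DX,F)\big)$ with the total number of $(n-k)$-dimensional cells in the refined stratification $\mathcal S_F^\bullet(DX) = \{X^\partial(F,\omega),\, X^\circ(F,\omega),\, \tau(X^\circ(F,\omega))\}_\omega$. The dimension formulas $\dim X^\partial(F,\omega) = \dim K(\omega) = n - \mu'(\omega)$ and $\dim X^\circ(F,\omega) = n+1 - \mu'(\omega)$ single out two families of indices: those $\omega$ with $\mu'(\omega) = k$ (contributing to the boundary stratum) and those with $\mu'(\omega) = k+1$ (contributing to $X^\circ$ and its $\tau$-twin). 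The fourth bullet of Definition 3.1 then supplies the fibre counts: over each connected component of $K(\omega)$ the restriction $F^\partial$ is a trivial covering with $\mu(\omega)-\mu'(\omega)$ sheets, while removing the $\mu(\omega)-\mu'(\omega)$ boundary-intersection points from the interval fibre of $F$ cuts it into $\mu(\omega)-\mu'(\omega)-1$ open sub-intervals (singleton fibres contribute nothing to $X^\circ$). Using $\mu(\omega)-\mu'(\omega) \leq \kappa_{n-k}(n)$ in the first family and $\mu(\omega)-\mu'(\omega) - 1 \leq \kappa_{n-k-1}(n)$ in the second, summation over $\omega$ yields
\[
\textup{rk}\big(\mathsf C^{n-k}_\mho(DX,F)\big) \;\leq\; {^\sharp c^{n-k}(F(X))} + 2\cdot{^\sharp c^{n-k-1}(F(X))},
\]
the factor of $2$ accounting for the pair $\{X^\circ(F,\omega),\,\tau(X^\circ(F,\omega))\}$. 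The first bullet of Theorem \ref{th11.4} then delivers the first asserted inequality. The second inequality is parallel but simpler: only $X^\circ(F,\omega)$-pieces with $\mu'(\omega)=k+1$ contribute to the $(n-k)$-dimensional cells of $\mathcal S_F^\circ(X)$, and the same interval-splitting count (without the factor of $2$) gives $\textup{rk}\big(\mathsf C^{n-k}_\mho(X,F^\circ)\big) \leq {^\sharp c^{n-k-1}(F(X))}$, which combines with the second bullet of Theorem \ref{th11.4} to finish.

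Once Theorem \ref{th11.4} is in hand, the only obstacle is bookkeeping: one must keep two different dimensional conventions straight ($\mu'(\omega)$ is the codimension of $K(\omega)$ in the $n$-dimensional shadow $K$, whereas the cells of $\mathcal S_F^\bullet(DX)$ and $\mathcal S_F^\circ(X)$ live in the $(n+1)$-dimensional ambient spaces $DX$ and $X$), and handle the interval-versus-singleton dichotomy for the fibres of $F$ correctly. No further analytic ingredient is required.
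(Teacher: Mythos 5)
Your proof is correct and follows essentially the same route as the paper's: both reduce the corollary to Theorem~\ref{th11.4} by counting, per connected component of $K(\omega)$, how many $(n-k)$-dimensional strata it contributes to $\mathcal S^\bullet_F(DX)$ (namely $\mu(\omega)-\mu'(\omega)$ boundary strata when $\mu'(\omega)=k$ and $2(\mu(\omega)-\mu'(\omega)-1)$ interior strata when $\mu'(\omega)=k+1$) and to $\mathcal S^\circ_F(X)$, then bounding these multiplicities by $\kappa_{n-k}(n)$ and $\kappa_{n-k-1}(n)$. Your explicit note that amenability of the boundary image in $\pi_1(X)$ carries over to $\pi_1(DX)$ (so that both bullets of Theorem~\ref{th11.4} apply under the stated hypothesis) is a useful clarification that the paper leaves implicit.
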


\begin{proof} Let $K = F(X)$ be a shadow of $X$. Any connected component of $K(\omega)$ of dimension $n- \mu'(\omega)$ (by Definition 3.1) gives rise to $\mu(\omega)- \mu'(\omega)$ strata $\s \in \mathcal S^\bullet_F(DX)$ of  dimension $n- \mu'(\omega)$ and to $2(\mu(\omega)- \mu'(\omega) -1)$ strata $\s \in \mathcal S^\bullet_F(DX)$ of dimension $n + 1- \mu'(\omega)$. Therefore (see Fig. 3) the number of strata $\s \in \mathcal S^\bullet_F(DX)$ of dimension $j = n-k$ is given by the formula:
\begin{eqnarray}\label{eq11.30a}
\sum_{\omega|\, \mu'(\omega) = k} (\mu(\omega) -k)\cdot \#\Big(\pi_0\big(K(\omega)\big)\Big)   + \sum_{\omega|\, \mu'(\omega) = k+1}2(\mu(\omega) -k -1)\cdot\#\Big(\pi_0\big(K(\omega)\big)\Big).\nonumber \\
\hfill 
\end{eqnarray}
By the definition of $\{\kappa_j(n)\}_j$, the latter number is smaller than or equal to
$$\kappa_{n-k}(n)\cdot \sum_{\omega|\, \mu'(\omega) = k}\#\Big(\pi_0\big(K(\omega)\big)\Big)   + 2\kappa_{n-k -1}(n)\cdot \sum_{\omega|\, \mu'(\omega) = k+1}\#\Big(\pi_0\big(K(\omega)\big)\Big),$$
where the fist sum is the number of strata in $F(X)$ of dimension $n-k$,  and the second sum is the number of strata of dimension $n-k-1$. Thus the previous formula is an upper bound of the number of components in $\mathcal S^\bullet_F(DX)$ of dimension $n-k$. 

Similar upper estimate of the number of connected components in $X^{F\circ}_{-k}$  leads to the LHS of the second inequality in (\ref{eq11.25}). 

Now Theorem \ref{th11.4} implies formulas (\ref{eq11.25}).
\end{proof} 

Corollary \ref{cor11.6} has the following two immediate implications which reveal the non-triviality of the groups/spaces $H^{\mathbf \D}_{k+1}(DX) \subset H^{\mathbf \D}_{k+1}(\Pi)$ and $H^{\mathbf \D}_{k}(X) \subset H^{\mathbf \D}_{k}(\pi)$ as \emph{obstructions} to the existence of shadows $F$ of low/vanishing complexity. The first implication is just a repackaging of Corollary \ref{cor11.4}. 

\begin{corollary}\label{cor11.7} Under the hypotheses of Corollary \ref{cor11.6}, 
 if $H^{\mathbf \D}_{k+1}(DX) \neq 0,$ then either $c^{n-k}(F(X)) \neq 0$ or $c^{n-k -1}(F(X)) \neq 0$ for \emph{any} shadow $F$. \smallskip
 
If $H^{\mathbf \D}_{k}(X) \neq 0,$ then $c^{n-k -1}(F(X)) \neq 0$ for \emph{any} shadow $F$.
\hfill $\diamondsuit$
\end{corollary}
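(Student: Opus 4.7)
The plan is to derive both assertions as direct contrapositives of the two inequalities in (\ref{eq11.25}) of Corollary \ref{cor11.6}. The single algebraic observation needed is that, by the definition $^\sharp c^j(K) =_{\mathsf{def}} \kappa_j(n) \cdot c^j(K)$, one has $c^j(F(X)) = 0 \Rightarrow {}^\sharp c^j(F(X)) = 0$ automatically (this implication requires nothing at all about $\kappa_j(n)$; the reverse implication would need $\kappa_j(n) \geq 1$, but that direction is not used).

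For the first bullet, I would argue by contradiction. Assume some shadow $F \in \mathsf{Shad}(X,\mathsf E \Rightarrow \mathsf T)$ satisfies $c^{n-k}(F(X)) = 0$ and $c^{n-k-1}(F(X)) = 0$. Then $^\sharp c^{n-k}(F(X)) = 0$ and $^\sharp c^{n-k-1}(F(X)) = 0$ as well, so the left-hand side of the first inequality of (\ref{eq11.25}) collapses to zero. This forces $\textup{rk}\big(H^{\mathbf\D}_{k+1}(DX)\big) = 0$, i.e.\ $H^{\mathbf\D}_{k+1}(DX) = 0$, contradicting the hypothesis.

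The second bullet is the one-term analogue, obtained from the second inequality of (\ref{eq11.25}) after shifting the index so that the target homology group on the right matches the hypothesis $H^{\mathbf\D}_{k}(X) \neq 0$ (that is, substituting $k-1$ for $k$ in that inequality, legitimate because Corollary \ref{cor11.6} is quantified over all admissible indices and its amenability hypothesis in $\pi_1(X)$ is untouched by the shift). Vanishing of the single relevant unweighted complexity on the left forces its weighted cousin to vanish, and the inequality then yields $H^{\mathbf\D}_k(X) = 0$, contradicting the hypothesis.

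I expect no serious obstacle: the entire analytic content sits in Corollary \ref{cor11.6} (and upstream in Theorem \ref{th11.3}, Theorem \ref{th11.3a}, and Theorem \ref{th11.4}). The present statement is a purely bookkeeping rephrasing in contrapositive form, and the only point to verify is that the amenability assumption on boundary components transported from Corollary \ref{cor11.6}---into $\pi_1(DX)$ for the first bullet, into $\pi_1(X)$ for the second---remains valid after the index shift in the second case, which it does since the hypothesis does not depend on $k$.
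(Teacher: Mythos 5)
Your treatment of the first bullet is correct and is essentially the paper's intended argument: $c^{n-k}(F(X)) = c^{n-k-1}(F(X)) = 0$ forces both weighted complexities to vanish, the first inequality of (\ref{eq11.25}) then collapses to $0 \geq \textup{rk}\big(H^{\mathbf\D}_{k+1}(DX)\big)$, contradicting $H^{\mathbf\D}_{k+1}(DX)\neq 0$. The observation that $c^j=0\Rightarrow {}^\sharp c^j=0$ trivially, and that only this direction is needed, is also fine.

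For the second bullet, however, your index shift does not land where you claim it does, and this is precisely the point that deserved scrutiny. Substituting $k-1$ for $k$ in the second inequality of (\ref{eq11.25}) yields
\[
{}^\sharp c^{\,n-(k-1)-1}(F(X)) \;=\; {}^\sharp c^{\,n-k}(F(X)) \;\geq\; \textup{rk}\big(H^{\mathbf\D}_{k}(X)\big),
\]
so the ``single relevant unweighted complexity on the left'' is $c^{n-k}(F(X))$, not $c^{n-k-1}(F(X))$. Your argument therefore proves ``if $H^{\mathbf\D}_{k}(X)\neq 0$ then $c^{n-k}(F(X))\neq 0$,'' which is \emph{not} the printed second bullet. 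By leaving the left-hand index unnamed you glossed over the fact that the shifted inequality and the stated conclusion disagree by one. Moreover, no index choice in (\ref{eq11.25}) pairs $H^{\mathbf\D}_{k}(X)$ with $c^{n-k-1}(F(X))$: at index $k$ one gets the pair $\big(H^{\mathbf\D}_{k+1}(X),\,c^{n-k-1}\big)$ and at index $k-1$ the pair $\big(H^{\mathbf\D}_{k}(X),\,c^{n-k}\big)$. The pair as printed, $\big(H^{\mathbf\D}_{k}(X),\,c^{n-k-1}\big)$, is unattainable, and there is no way to bridge the gap (vanishing of $c^{n-k-1}$ controls strata of \emph{higher} codimension, not lower, so one cannot infer $c^{n-k}=0$ from it; nor does $H^{\mathbf\D}_{k+1}(X)=0$ imply $H^{\mathbf\D}_{k}(X)=0$). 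This strongly suggests an off-by-one typo in the paper's statement of Corollary~\ref{cor11.7}: compare with the Section~4 analogue (\ref{eq11.42A}), where the second inequality reads $\textup{rk}\big(H^{\mathbf\D}_{k}(X)\big)\leq (n+2)\,tc^{n-k}(v)$, pairing $H^{\mathbf\D}_{k}(X)$ with the complexity at level $n-k$, and the sentence following it concludes exactly ``$tc^{n-k}(v)=0$ implies $H^{\mathbf\D}_{k}(X)=0$.'' You should state explicitly which complexity index your shift produces and flag that the statement as printed does not follow; proving the corrected pairing (either $H^{\mathbf\D}_{k}(X)\neq 0\Rightarrow c^{n-k}(F(X))\neq 0$, or $H^{\mathbf\D}_{k+1}(X)\neq 0\Rightarrow c^{n-k-1}(F(X))\neq 0$) is what the machinery actually yields.
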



\begin{corollary} Let $X$ be a compact connected $(n+1)$-dimensional $\mathsf{PL}$-manifold with a nonempty boundary.  Assume that, for each connected component of the boundary $\d X$, the image of its fundamental group in $\pi_1(X)$ is amenable. Then, for each $k \in [-1, n]$,
\begin{eqnarray}\label{eq11.26}
^\sharp c^{n-k}_{\mathsf {shad}}(X, \mathsf E \Rightarrow \mathsf T)  + 2\cdot\,  ^\sharp c^{n-k-1}_{\mathsf {shad}}(X, \mathsf E \Rightarrow \mathsf T)   \geq  \textup{rk}\big(H^{\mathbf \D}_{k+1}(DX)\big),\\
^\sharp c^{n-k-1}_{\mathsf {shad}}(X, \mathsf E \Rightarrow \mathsf T)   \geq  \textup{rk}\big(H^{\mathbf \D}_{k+1}(X)\big).\nonumber
\end{eqnarray}
\hfill $\diamondsuit$
\end{corollary}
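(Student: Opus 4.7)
The plan is to derive both inequalities by applying Corollary~\ref{cor11.6} to a single, jointly optimal shadow. First I would fix the definition of $^\sharp c^j_{\mathsf{shad}}(X, \mathsf E \Rightarrow \mathsf T)$ in the lexicographic spirit of Definition~3.8, i.e.\ as the $(j+1)$-st component of the vector
\[
\textup{lex.min}_{F \in \mathsf {Shad}(X,\,\mathsf E \Rightarrow \mathsf T)}\; \big(^\sharp c^0(F(X)),\, ^\sharp c^1(F(X)),\, \dots,\, ^\sharp c^n(F(X))\big).
\]
The point of this convention is that a single shadow $F^{\mathsf{lex}}$ simultaneously realizes $^\sharp c^j(F^{\mathsf{lex}}(X)) = \, ^\sharp c^j_{\mathsf{shad}}(X, \mathsf E \Rightarrow \mathsf T)$ for every $j$ in the relevant range.

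With such an $F^{\mathsf{lex}}$ in hand, the second inequality is immediate: Corollary~\ref{cor11.6} applied to $F^{\mathsf{lex}}$ already gives $^\sharp c^{n-k-1}(F^{\mathsf{lex}}(X)) \geq \textup{rk}\big(H^{\mathbf \D}_{k+1}(X)\big)$, and the LHS equals $^\sharp c^{n-k-1}_{\mathsf{shad}}(X,\mathsf E \Rightarrow \mathsf T)$ by construction. For the first inequality, the same Corollary yields
\[
^\sharp c^{n-k}(F^{\mathsf{lex}}(X)) + 2 \cdot\, ^\sharp c^{n-k-1}(F^{\mathsf{lex}}(X)) \;\geq\; \textup{rk}\big(H^{\mathbf \D}_{k+1}(DX)\big),
\]
and substituting $^\sharp c^j(F^{\mathsf{lex}}(X)) = \, ^\sharp c^j_{\mathsf{shad}}(X,\mathsf E \Rightarrow \mathsf T)$ for $j \in \{n-k,\, n-k-1\}$ delivers the claim. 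All the genuinely analytic content---amenable localization, the Poincar\'{e} duality transfer $\mathcal M^{F, \mho}_{k+1}$, the universal constants $\Theta, \Theta^\circ$---is already packaged inside Corollary~\ref{cor11.6}, so nothing further has to be invoked.

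The main pitfall, and the only step that requires genuine care, is precisely this matter of joint realizability. A naive definition $^\sharp c^j_{\mathsf{shad}} := \min_F\, ^\sharp c^j(F(X))$ taken component-by-component would \emph{not} suffice for the first inequality, since in general $\min_F a(F) + \min_F b(F) \leq \min_F[a(F)+b(F)]$ with strict inequality entirely possible; the bound on $\min_F[a+b]$ produced by Corollary~\ref{cor11.6} does not automatically descend to a bound on the sum of independent minima, as one sees immediately from a two-shadow example with $a(F_1)=b(F_2)=0$ and $a(F_2)=b(F_1)>0$. The lexicographic convention circumvents exactly this obstruction by exhibiting one shadow that minimizes each coordinate in the prescribed order, after which the two-line deduction above goes through.
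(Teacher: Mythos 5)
The paper states this corollary with no proof (it follows Corollary~3.6 with only a $\diamondsuit$), so there is no written argument to compare against; but your derivation is the natural and correct one. Your central observation is exactly right: the first inequality is an inequality involving a \emph{sum} of two complexities evaluated at (a priori) different $F$'s, and a component-wise definition $^\sharp c^j_{\mathsf{shad}} := \min_F\, ^\sharp c^j(F(X))$ would not allow a direct transfer of Corollary~3.6, because the minima at $j=n-k$ and $j=n-k-1$ need not be achieved by a common shadow. Reading $^\sharp c^j_{\mathsf{shad}}(X,\mathsf E\Rightarrow\mathsf T)$ as $\kappa_j(n)\cdot c^{\mathsf{shad}}_j(X,\mathsf E\Rightarrow\mathsf T)$, i.e.\ the weighted version of Definition~3.7's \emph{lexicographic} shadow complexity, is consistent with the paper's notation and with the author's own acknowledgement of this realizability issue (the remark sandwiched between the two Definitions~3.8, which is exactly why the lexicographic version was introduced in the first place for the suspension complexities). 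Since $\kappa_j(n)>0$ is $F$-independent, the shadow $F^{\mathsf{lex}}$ that lex-minimizes $(c_0,\dots,c_n)$ also lex-minimizes $(\kappa_0 c_0,\dots,\kappa_n c_n)$, so your substitution step is valid. Your two-line derivation then reproduces both inequalities from Corollary~3.6 applied to $F^{\mathsf{lex}}$, and the second inequality (a single term, no sum) would in fact survive even under the component-wise reading, which you could have noted as an extra remark. One minor slip: the lexicographic convention for \emph{ordinary} shadow complexities is in Definition~3.7, not Definition~3.8 (the latter treats the suspension variant); the logic is unaffected.
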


\section{Complexity of Traversing Flows}

Now let us examine applications of these results about the complexities of shadows to the traversing flows on smooth manifolds with boundary. In fact, the entire general setting in Section 3 was designed with these applications in mind. As we will show next,  any proposition about shadows of $\mathsf{PL}$-manifolds with boundary and the simplicial semi-norms has an analogue for the smooth manifolds with boundary that carry a traversally generic vector field. In short, the traversally generic fields are a good source of shadows. \smallskip 

We will apply Theorem \ref{th11.3}, Theorem \ref{th11.4}, and their corollaries to traversing vector fields $v$. In this setting, the poset $\mathcal S =_{\mathsf{def}} \Omega^\bullet$, $\mu : = | \sim |$, $\mu' : = | \sim |'$ (see (\ref{eq2.2})), the role of shadows is played by the obvious maps $\Gamma: X \to \mathcal T(v)$ which belong to an appropriate set $\mathsf{Shad}(X,  \mathsf E \Rightarrow \mathsf T)$. 

\begin{theorem}\label{th11.6} Let $X$ be a smooth compact connected and oriented $(n+1)$-manifold with boundary. Any traversally generic vector field $v$ gives rise to a shadow $\Gamma: X \to \mathcal T(v)$ in the sense of Definition 3.1. The the model projections $\{p_\omega: \mathsf E_\omega \to \mathsf T_\omega\}_{\omega \in  \Omega^\bullet_{'\langle n]}}$ are described in \cite{K3}, Theorems 7.4  and 7.5., utilizing special coordinates as in (2.4). 
\end{theorem}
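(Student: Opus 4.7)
Take $\mathcal S = \Omega^\bullet_{'\langle n]}$ with $\mu = |\cdot|$ and $\mu' = |\cdot|'$ as in (\ref{eq2.2}), and let $\{p_\omega: \mathsf E_\omega \to \mathsf T_\omega\}$ be the universal models built from the polynomial $\wp$ of (\ref{eq2.4}) following \cite{K3}, Theorems 7.4--7.5. The entire statement then reduces to verifying the five bullets of Definition 3.1 for $\Gamma: X \to \mathcal T(v)$, and the single tool required is the traversally generic normal form of \cite{K2}, Lemma 3.4: on a tubular neighborhood $U \subset \hat X$ of each trajectory $\g$ of type $\omega$ there are coordinates $(u,\vec x,\vec y): U \to \R \times \R^{|\omega|'} \times \R^{n-|\omega|'}$ in which $v = \d_u$, $\d X \cap U = \{\wp(u,\vec x) = 0\}$, and $X \cap U = \{\wp(u,\vec x) \le 0\}$. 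Because $\wp$ depends only on $\omega$, the picture is automatically $X$-independent, which is what makes the collections $\mathsf E$ and $\mathsf T$ universal.

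Four of the five bullets are then routine. The \emph{second} bullet (fibers of $\Gamma$ are closed intervals or singletons with endpoints in $\d X$) is the definition of a traversing field. The \emph{third} bullet holds because a trajectory of combinatorial type $\omega = (\omega_1,\ldots,\omega_\ell)$ meets $\d X$ in exactly $\ell = |\omega| - |\omega|' = \mu(\omega) - \mu'(\omega)$ distinct points, one per distinct root of $\wp(u,\vec 0)$; so $\Gamma^\d$ is surjective with finite fibers of the prescribed cardinality. The \emph{fourth} bullet is read off the product structure of the normal form: the $\vec y$-factor parametrizes an orientable chart of $\mathcal T(v,\omega) = K(\omega)$ (the orientability being inherited from the orientation of $X$), while the $(u,\vec x)$-factor realizes $\mathsf E_\omega$ and provides both the trivial interval fibration $\Gamma^{-1}(K(\omega)) \to K(\omega)$ and the trivial $\ell(\omega)$-sheeted covering $(\Gamma^\d)^{-1}(K(\omega)) \to K(\omega)$. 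The \emph{fifth} bullet is the tautology that, after identifying the $\vec y$-factor with $D^{n-\mu'(\omega)}$, the normal form coincides pointwise with $p_\omega \times \mathrm{id}_{D^{n-\mu'(\omega)}}$ and preserves the $\mathcal S_n$-stratifications on both sides.

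The main obstacle is thus the \emph{first} bullet: checking that $\mathcal T(v)$ belongs to the recursively defined class $\mathsf{Cosp}(\mathsf T, n)$. One must show that each $\mathsf T_\omega$ is topologically a cone on a link $\mathsf S_\omega \in \mathsf{Cosp}(\mathsf T, n-1)$, with the base case $\mathsf{Cosp}(\mathsf T, 1)$ of graphs with vertices of valences $1$ and $3$ only. The cone structure should come from the weighted dilation $(u,\vec x) \mapsto (tu, t^{\bullet}\vec x)$ that rescales each coordinate $x_{i,l}$ in (\ref{eq2.4}) according to its $u$-degree, thereby contracting $\mathsf T_\omega$ onto the class of the central trajectory; the link $\mathsf S_\omega$ is then the quotient of $\mathsf E_\omega \cap \{\|\vec x\|_{\mathrm{wt}} = 1\}$ by the $\d_u$-flow. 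The inductive step is controlled by the way the roots of $\wp(u,\vec x)$ split off $\vec x = 0$: the resulting simpler combinatorial types $\omega' \prec_\bullet \omega$ are precisely the elements of $\Omega^\bullet_{'\langle n-1]}$, and the poset $\Omega^\bullet_{'\langle n]}$ of \cite{K3}, Section~2, is designed to encode this recursion. The base case on surfaces is visible in Figure~1: only the types $(11), (121), (2)$ occur, realizing interior edge-points and vertices of valences $3$ and $1$ respectively. The principal difficulty is the bookkeeping that matches the recursion of $\Omega^\bullet_{'\langle n]}$ to the recursion of $\mathsf{Cosp}(\mathsf T, n)$ and checks that the weighted dilation actually preserves the slab $\{\wp \le 0\}$; once this is in place, the first bullet closes and the theorem follows.
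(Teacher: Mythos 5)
Your overall plan—reduce the theorem to a verification of the five bullets of Definition~3.1 with $\mathcal S = \Omega^\bullet_{'\langle n]}$, $\mu = |\cdot|$, $\mu' = |\cdot|'$, and the normal form of \cite{K2}, Lemma~3.4 as the engine—is the same route the paper takes. The paper's actual proof is a concise citation checklist: it invokes \cite{K4}, Theorem~2.2 to get the Whitney stratification of $\mathcal T(v)$ (hence that $\Gamma$ is simplicial), the definition of ``traversing'' for the second bullet, \cite{K3}, Corollary~5.1 for the triviality and cardinality of the boundary cover, \cite{K4}, Theorem~2.1 for the orientability of the pure strata, and \cite{K2}, Lemma~3.4 together with \cite{K3}, Theorems~5.2--5.3 for the regular-neighborhood conjugacy $\Gamma|_{\Gamma^{-1}(V_\gamma)} \cong p_\omega \times \mathrm{id}$, which settles both the $\mathsf{Cosp}(\mathsf T,n)$ membership and the fifth bullet at one stroke.

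Where you diverge is the emphasis placed on the first bullet and the weighted-dilation argument. Notice that the cone condition (``each $\mathsf T_\omega$ is a cone over $\mathsf S_\omega \in \mathsf{Cosp}(\mathsf T, n-1)$'') is a requirement imposed on the \emph{model collection} $\mathsf T$ when $\mathsf{Cosp}(\mathsf T, n)$ is set up, not something to be re-derived for each $\mathcal T(v)$. Since the theorem's statement already declares that the model projections are the ones described in \cite{K3}, Theorems~7.4--7.5, the cone structure of the $\mathsf T_\omega$'s is part of the cited package and should simply be quoted. What remains to check for the first bullet is that every point of $\mathcal T(v)$ has a neighborhood $\mathsf{PL}$-homeomorphic to $\mathsf T_\omega \times D^{n-\mu'(\omega)}$, and this is exactly what your discussion of the fifth bullet already delivers via the normal form. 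In short, your weighted-dilation construction is a reasonable idea in spirit (and roughly how the cone structure is built in \cite{K3}), but it is misdirected here and needlessly reopens the hardest part of the argument; the paper avoids that entirely by delegating it. With that redistribution of effort—use the normal form and the prior citations for the local conjugacy, and quote \cite{K3} for the structural properties of the models—your proof closes.
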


\begin{proof} Recall that, for a traversally generic $v$, the trajectory space $\mathcal T(v)$ is a Whiney stratified space (see \cite{K4}, Theorem 2.2),  
which implies that $\Gamma$ is a simplicial map with respect to the appropriate triangulations of the source and target spaces. Moreover, the triangulation of $X$ can be chosen to be smooth. 

By the very definition of a traversing flow, each fiber $\Gamma^{-1}(\g), \, \g \in \mathcal T(v)$ is either a closed segment, or a singleton. 

By Corollary 5.1 from \cite{K3}, the map $\Gamma: \d_1X \to \mathcal T(v)$, being restricted to the preimage of each proper stratum $\mathcal T(v, \omega)$, is a cover with the trivial monodromy and fibers of cardinality $$\#(\sup(\omega)) = |\omega| - |\omega|' = \mu(\omega) - \mu'(\omega).$$

Moreover, by Theorem 2.1 from \cite{K4}, each pure stratum $\mathcal T(v, \omega)$ is an open orientable smooth manifold. 

By Lemma 3.4 from \cite{K2}, Theorems 5.2 and 5.3 from \cite{K3}, each point $\g \in \mathcal T(v, \omega)$, has a regular neigborhood $V_\g \subset \mathcal T(v)$, so that $\Gamma: \Gamma^{-1}(V_\g) \to V_\g$ is $\mathsf{PL}$-homeomorphic   to the model projection $p_\omega: \mathsf E_\omega \to \mathsf T_\omega$. 

This completes the checklist of bullets from Definition 3.1.
\end{proof}

Let $X$ be a smooth compact connected and oriented $(n+1)$-manifold with boundary, and $v$ a traversally generic vector field.  
  
Let us consider a filtration 
$$\mathcal T(v) = \mathcal T(v)_{-0} \supset \mathcal T(v)_{-1} \supset \dots \supset \mathcal T(v)_{-n}$$
of the trajectory space $\mathcal T(v)$ by the closed subcomplexes
\begin{eqnarray}\label{eq11.28}
\mathcal T(v)_{-j} =_{\mathsf{def}} \bigcup_{\big\{\omega \in \Omega^\bullet_{'\langle n]}\big |\; |\omega|' \geq j\big\}} \, \mathcal T(v, \omega)
\end{eqnarray}
of dimensions $n-j$.  
\smallskip

Let $\mathcal A$ be an abelian coefficient system on $\mathcal T(v)$ (equivalently, on $X$). As a default, $\mathcal A = \R$. For each $j \in [0, n]$, consider the relative homology groups\footnote{in our notations, we suppress the dependance of these homology groups on the coefficients $\mathcal A$. }
\begin{eqnarray}\label{eq11.29} 
\{\mathsf C^\mho_j(\mathcal T(v)) : = H_j\big(\mathcal T(v)_{-n+j}, \mathcal T(v)_{-n+j-1}; \, \mathcal A\big)\}_j  
\end{eqnarray}
associated with the filtration. As in the context of shadows, $\mathsf C^\mho_j(\mathcal T(v))$ is the top homology of the quotient $\mathcal T(v)_{-n+j}/\mathcal T(v)_{-n+j-1}$.

These homology groups can be organized into a \emph{differential complex}
\begin{eqnarray}\label{eq11.30}
\mathbf C^\mho_\ast(\mathcal T(v)) =_{\mathsf{def}} \qquad \qquad  \\ 
\big\{0 \to \mathsf C^\mho_n(\mathcal T(v)) \stackrel{\partial_n}{\rightarrow} \mathsf C^\mho_{n-1}(\mathcal T(v)) \stackrel{\partial_{n-1}}{\rightarrow}  \dots \stackrel{\partial_1}{\rightarrow}  \mathsf C^\mho_0(\mathcal T(v)) \to 0\big\}, \quad \nonumber
\end{eqnarray}
where the differentials $\{\partial_j\}$ are the boundary homomorphisms from the long exact homology sequences of the triples $\{\mathcal T(v)_{-n+j} \supset  \mathcal T(v)_{-n+j-1} \supset \mathcal T(v)_{-n+j-2}\}_j.$

The modules $\mathsf C^\mho_j(\mathcal T(v))$ are free and finitely generated, the number of generators being the number of connected components of the set $$\mathcal T(v)^\circ_{-n+j} =_{\mathsf{def}} \mathcal T(v)_{-n+j}\setminus \mathcal T(v)_{-n+j-1}.$$ This observation is valid, since by Theorem 2.1 from \cite{K4}, 
the components of $\mathcal T(v)^\circ_{-n+j}$ are orientable smooth manifolds.

The differential complex (\ref{eq11.30}) gives rise to the $\mho$-\emph{homology groups} $\mathsf H^\mho_j(v) =_{\mathsf{def}}  \mathsf H^\mho_j(\mathcal T(v))$ of the traversally generic  field $v$. \smallskip

As in the category of shadows, the traversing fields give rise to several notions of complexity.
\smallskip

\noindent{\bf Definition 4.1.}
With any traversally generic vector field $v$ on a smooth compact connected and oriented $(n+1)$-manifold $X$ with boundary, we associate the ordered collection of ranks $$\Big\{tc_j(v) =_{\mathsf{def}} \textup{rk}_{\mathcal A}\big(\mathsf C^\mho_j\big(\mathcal T(v)\big)\big)\Big\}_{0 \leq j \leq n},$$ 
where the groups $\mathsf C^\mho_j(\mathcal T(v)))$ have been introduced in (\ref{eq11.29}).

We call $tc_j(v)$ \emph{the $j$-th  traversing complexity} (``$tc$" for short) of the field $v$.\smallskip
\hfill $\diamondsuit$
\smallskip

\noindent{\bf Remark 4.1.} 
Reviewing (\ref{eq11.29}), we notice that $tc_{n-k}(v) = 0$ implies that $\mathcal T(v)^\circ_{-k} = \emptyset$ since $\mathcal T(v)^\circ_{-k}$ is an orientable manifold. Examining the local models $\{p_\omega: \mathsf E_\omega \to \mathsf T_\omega\}_\omega$, we observe that if a particular combinatorial type $\omega'$ is missing in a model, then all the smaller combinatorial types $\omega'' \prec \omega'$ (of greater codimensions) are missing as well (see \cite{K3}). Therefore $\mathcal T(v)^\circ_{-k} = \emptyset$ implies that $\{\mathcal T(v)_{-j} = \emptyset\}_{j \geq k}$.  As a result, if the complexity $tc_{n-k}(v) = 0$, then the complexities $tc_{n-j}(v) = 0$ for all $j \geq k$.  \hfill $\diamondsuit$
\smallskip

\noindent{\bf Definition 4.2.}
Let $X$ be a compact connected and oriented smooth $(n+1)$-manifold  with boundary. 
For each traversally generic field $v \in \mathcal V^\ddagger(X)$, we form the sequence of traversing complexities: 
$$\mathbf{tc}(v) =_{\mathsf{def}}\{tc_0(v),\, tc_1(v),\, \dots ,\, tc_n(v)\}$$

Consider the \emph{lexicographical minimum} 
\begin{eqnarray}\label{eq11.12} 
\mathbf {tc}^{\mathsf{lex}}(X) =_{\mathsf{def}} \textup{lex.min}_{\{v \in \mathcal V^\ddagger(X)\}}\; \mathbf{tc}(v)
\end{eqnarray}
We call this vector \emph{the lexicographic traversing complexity} of $X$. 

We denote by $tc^{\mathsf{lex}}_j(X)$ the $(j+1)$-component of the vector  $\mathbf {tc}^{\mathsf{lex}}(X)$. 
\hfill $\diamondsuit$
\smallskip

\noindent{\bf Remark 4.2.}
If a compact connected and oriented smooth manifold $X$ of dimension $n+1$  is globally $k$-convex in the sense of  Definition 2.2, 
then evidently $tc^{\mathsf{lex}}_j(X) = 0$ for all $j \leq n-k$ in accordance with Remark 4.1. 
\hfill $\diamondsuit$
\smallskip 

\noindent{\bf Remark 4.3.}
Thanks to Theorem \ref{th11.6}, traversally generic fields produce a particular kind of shadows from the set $\mathsf{Shad}(X, \mathsf E\Rightarrow  \mathsf T)$. Therefore, $$\mathbf{c}^{\mathsf{lex}}_{\mathsf{shad}}(X) \leq \mathbf{tc}^{\mathsf{lex}}(X).$$
\hfill $\diamondsuit$

As with the shadows, we will need ``suspensions" of these notions and constructions. 

We consider the stratification $\Omega^\bullet(X, v)$ of $X$ by the connected components of the strata 
\begin{eqnarray}\label{eq11.32}
\qquad \qquad  \big\{X^\circ(v, \omega) =_{\mathsf{def}} X(v, \omega) \setminus (\partial X \cap X(v, \omega)),\; X^\partial(v, \omega) =_{\mathsf{def}} \partial X \cap X(v, \omega)\big\}_{\omega \in \Omega^\bullet_{'\langle n]}}.
\end{eqnarray}
and the $\tau$-invariant stratification $\Omega^\bullet(DX, v)$ of the double $DX$, which is induced by $\Omega^\bullet(X, v)$.

Let 
$$X^v_{-(k+1)} =_{\mathsf{def}} \Big(\bigcup_{\big\{\omega \big |\, |\omega|' \geq k+1\big\}} X^\circ(v, \omega)\Big) \bigcup \Big( \bigcup_{\big\{\omega \big |\, |\omega|' \geq k\big\}} X^\partial(v, \omega)\Big),$$

The stratifications $\Omega^\bullet(X, v)$ and $\Omega^\bullet(DX, v)$ give rise to the filtrations: 
$$X =_{\mathsf{def}} X^v_{-0} \supset X^v_{-1} \supset \dots \supset X^v_{-(n+1)},$$
$$DX =_{\mathsf{def}} DX_{-0}^v \supset DX_{-1}^v \supset \dots \supset DX_{-(n+1)}^v$$
 by the union of strata of a fixed codimension, each pure stratum being an open manifold.
 
Analogously to (3.7), 
we consider the homology  and cohomology groups with coefficients in $\mathcal A = \R$ or $\mathcal A = \Z$:
 \begin{eqnarray}\label{eq11.33} 
\{\mathsf C^\mho_j\big(DX, v\big) =_{\mathsf{def}} H_j\big(DX^v_{j -n - 1},\, DX^v_{j-n}; \, \mathcal A\big)\}_j, \\ 
\{\mathsf C_\mho^j\big(DX, v\big) =_{\mathsf{def}} H^j\big(DX^v_{j -n - 1},\, DX^v_{j-n}; \, \mathcal A\big)\}_j.\nonumber 
\end{eqnarray}
As in (3.8) and (3.9), 
they can be organized into a differential complex: 
\begin{eqnarray}\label{eq11.34}
\mathbf C^\mho_\ast(DX, v) =_{\mathsf{def}}
\big\{0 \to \mathsf C^\mho_{n+1}(DX, v) \stackrel{\partial_{n+1}}{\rightarrow} \mathsf C^\mho_n(DX, v) \stackrel{\partial_{n}}{\rightarrow}  \dots \stackrel{\partial_1}{\rightarrow}  \mathsf C^\mho_0(DX, v) \to 0\big\}, \nonumber \\ 
 \quad
\end{eqnarray}
where the differentials $\{\partial_j\}$ are the boundary homomorphisms from the long exact homology sequences of triples $\{DX^v_{-j +1} \supset  DX^v_{-j} \supset DX^v_{-j -1}\}_j.$

Similarly, we introduce the dual differential complex 
\begin{eqnarray}\label{eq11.35}
\mathbf C_\mho^\ast(DX, v) =_{\mathsf{def}}
\big\{0 \leftarrow \mathsf C_\mho^{n+1}(DX, v) \stackrel{\partial^\ast_{n+1}}{\leftarrow} \mathsf C_\mho^n(DX, v) \stackrel{\partial^\ast_{n}}{\leftarrow}  \dots \stackrel{\partial^\ast_1}{\leftarrow} \mathsf C_\mho^0(DX, v) \leftarrow 0\big\} \nonumber \\ 
 \quad
\end{eqnarray}

These differential complexes produce the suspension $\mho$-homology $\mathbf{H}^\mho_\ast\big(DX, v\big)$ and $\mho$-cohomology $\mathbf{H}_\mho^\ast\big(DX, v\big)$ of  traversally generic $v$-flows on $X$.\smallskip 

\noindent{\bf Definition 4.3.}
Let $X$ be a compact connected oriented and smooth $(n+1)$-dimensional manifold with a  boundary. 
For any traversally generic field  $v \in \mathcal V^\ddagger(X)$ and each $j \in [0, n+1]$,
let  $$\Sigma tc^j(X, v) =_{\mathsf{def}} \textup{rk}_\mathcal A \big(\mathsf C_\mho^j(DX, v)\big).$$ We call this integer the $j$-th \emph{suspension $\mho$-complexity} of the field  $v$.
\hfill $\diamondsuit$
\smallskip

\noindent{\bf Definition 4.4.}
Let $X$ be a compact connected oriented and smooth $(n+1)$-dimensional manifold with a  boundary. For any  traversally generic field  $v \in \mathcal V^\ddagger(X)$, we form the sequence 
$$
\mathbf{\Sigma tc}(X, v) =_{\mathsf{def}} \big(\Sigma tc^0(X, v),\, \Sigma tc^1(X, v),\, \dots ,\, \Sigma tc^{n+1}(X, v)\big).
$$
and take the \emph{lexicographic} minimum
$$\mathbf{\Sigma tc}_{\mathsf{trav}}^{\mathsf{lex}}(X) =_{\mathsf{def}} \textup{lex.min}_{v \in \mathcal V^\ddagger(X)}\; \mathbf{\Sigma tc}(X, v).$$

We denote by $\Sigma tc_{\mathsf{trav}}^{\mathsf{lex},\, j}(X)$ the $(j+1)$ component of the vector $\mathbf{\Sigma c}_{\mathsf{trav}}^{\mathsf{lex}}(X)$ and call it the $j$-th \emph{lexicographic suspended traversing complexity} of $X$. \hfill $\diamondsuit$
\smallskip

\noindent{\bf Remark 4.4.}
By its very definition, the lexicographically optimal sequence of complexities is delivered by some traversally generic vector field!

Evidently,  $$\mathbf{\Sigma tc}_{\mathsf{trav}}^{\mathsf{lex}}(X)\,  \geq \, \mathbf{\Sigma c}_{\mathsf{shad}}^{\mathsf{lex}}(X),$$
where ``$\mathsf{shad}$" refers to the shadows, based on the list of model maps  $\{\mathsf E_\omega \to \mathsf T_\omega\}_{\omega \in \Omega^\bullet_{'\langle n]}}$ exemplifying the local structure of traversally generic flows.

A priori, both vectors, $\mathbf{tc}_{\mathsf{trav}}^{\mathsf{lex}}(X)$ and $\mathbf{\Sigma tc}_{\mathsf{trav}}^{\mathsf{lex}}(X)$, are invariants of the \emph{smooth} topological type of $X$, while $\mathbf{c}_{\mathsf{shad}}^{\mathsf{lex}}(X)$ and $\mathbf{\Sigma c}_{\mathsf{shad}}^{\mathsf{lex}}(X)$ are invariants of the $\mathsf{PL}$-topological type.
\hfill $\diamondsuit$
\smallskip
 
Consider the space $\mathcal V^\ddagger(X)$ of traversally generic vector fields on $X$ and its subspace $\mathcal V^\ddagger_{\mathsf{fold}}(X)$, formed by fields for which the multiplicity $m(a) \leq 2$ for any $v$-trajectory $\g$ at each point $a \in \g \cap \d X$. Locally, for such fields $v$,  $\Gamma: \d X \to \mathcal T(v)$ is a folding map.
\smallskip

\noindent{\bf Definition 4.5.}
Let $X$ be a compact connected oriented and smooth $(n+1)$-dimensional manifold with a  boundary.

For any traversally generic field $v \in\mathcal V^\ddagger_{\mathsf{fold}}(X)$ we define the \emph{lexicographic fold complexity} of $X$  by
\[ 
\mathbf{tc}_{\mathsf{fold\, trav}}^{\mathsf{lex}}(X) =_{\mathsf{def}} \textup{lex.min}_{v \in \mathcal V^\ddagger_{\mathsf{fold}}(X)}\; \mathbf{tc}(v).
\] 
\hfill $\diamondsuit$
\smallskip

Clearly, $\mathbf{tc}_{\mathsf{fold\, trav}}^{\mathsf{lex}}(X)  \geq \mathbf{tc}_{\mathsf{trav}}^{\mathsf{lex}}(X),$ provided $\mathcal V^\ddagger_{\mathsf{fold}}(X) \neq \emptyset$. When $X$ is a $3$-fold, by Theorem 9.5 in \cite{K}, 
$$tc_{\mathsf{fold \, trav}}^0(X) = tc_{\mathsf{trav}}^0(X).$$

Besides the inequality above, we know little about the relation between $\mathbf{tc}_{\mathsf{fold\, trav}}^{\mathsf{lex}}(X)$ and $\mathbf{tc}_{\mathsf{trav}}^{\mathsf{lex}}(X)$.
\smallskip


Now let us examine how the marriage of Amenable Localization and  Poincar\'{e} Duality works in the environment of traversing fields.

For each $k \in [-1, n]$, by composing the Poincar\'{e} duality map $\mathcal D$ with the localization to the subsets $DX^v_{-(k+1)} \subset DX$ and $X^{v\circ}_{-(k+1)} \subset X$,  we get  two localization transfer maps
$$\mathcal L^v_{k+1}: H_{k+1}\big(DX\big) \stackrel{\approx\mathcal D}{\longrightarrow} H^{n-k}\big(DX\big) \stackrel{i^\ast}{\rightarrow}  H^{n-k}\big(DX^v_{-(k+1)}\big),$$
$$\mathcal M^v_{k+1}: H_{k+1}\big(X\big) \stackrel{\approx\mathcal D}{\longrightarrow} H^{n-k}\big(X, \d X\big) \stackrel{i^\ast}{\rightarrow}  H^{n-k}\big(X^v_{-(k+1)},\, X^v_{-(k+2)}\cup (\d X \cap X^v_{-(k+1)}) \big).$$
whose targets can be identified with the quotients $\mathsf C^{n-k}_\mho(DX, v)/ \mathsf B^{n-k}_\mho(DX, v),$  and \hfill\break $\mathsf C^{n-k}_\mho(X, v^\circ)/ \mathsf B^{n-k}_\mho(X, v^\circ),$ respectively. 
We denote by 
\begin{eqnarray}\label{eq11.36}
\mathcal L_{k+1}^{v, \mho}: H_{k+1}\big(DX\big) \to \mathsf C^{n-k}_\mho(DX, v)/ \mathsf B^{n-k}_\mho(DX, v), \\
\mathcal M_{k+1}^{v, \mho}: H_{k+1}\big(X\big) \to \mathsf C^{n-k}_\mho(X, v^\circ)/ \mathsf B^{n-k}_\mho(X, v^\circ) \nonumber
\end{eqnarray}
the resulting operators. 

Now Theorem \ref{th11.3}, being combined with Theorem \ref{th11.6}, delivers  

\begin{theorem}[{\bf Amenable localization of the Poincar\'{e} duality for traversing flows}]\label{th11.7} \qquad 

Let $X$ be a compact connected oriented and smooth $(n+1)$-manifold $X$ with boundary.  
\begin{itemize}
\item Assume that for each connected component of the boundary $\d X$, the image of its fundamental group in $\pi_1(DX)$ is amenable.
Then, for each for each $k \in [-1, n]$, there exists an universal constant $\Theta > 0$ such that, for any traversally generic vector field $v$, the space $\mathsf C^{n-k}_\mho(DX, v)/ \mathsf B^{n-k}_\mho(DX, v)$ admits a norm $|[\; \; ]|_\mho$ so that 
\begin{eqnarray}\label{eq11.37}
\|h\|_\D \leq \Theta \cdot \big|[\mathcal L_{k+1}^{v, \mho}(h)]\big|_\mho
\end{eqnarray}  
for any $h \in H_{k+1}(DX)$. Here  the Poincar\'{e} duality localizing operator $\mathcal L_{k+1}^{v, \mho}$ is introduced in (4.9), 
and the universal constant $\Theta > 0$ depends only on the list of model maps $\{p_\omega: \mathsf E_\omega \to \mathsf T_\omega\}_{\omega \in \Omega^\bullet_{'\langle n]}}$ in the way that is described in formula (\ref{eq11.17}).   \smallskip

\item Let, for each connected component of the boundary $\d X$, the image of its fundamental group in $\pi_1(X)$ be an amenable group.
Then, for each $k \in [0, n]$, there exists an universal constant $\Theta^\circ > 0$ such that, for any traversally generic vector field $v$, the space $\mathsf C^{n-k}_\mho(X, v^\circ)/ \mathsf B^{n-k}_\mho(X, v^\circ)$ admits a norm $|[\; \; ]|_\mho$ so that 
\begin{eqnarray}\label{eq11.38}
\|h\|_\D \leq \Theta^\circ \cdot \big|[\mathcal M_{k+1}^{v, \mho}(h)]\big|_\mho
\end{eqnarray}  
for any $h \in H_{k+1}(X)$. Here  the Poincar\'{e} duality localizing operator  operator $\mathcal M_{k+1}^{v, \mho}$ is introduced in  (4.9), 
and the constant $\Theta^\circ > 0$ depends only on the list of model maps $\{p_\omega: \mathsf E_\omega \to \mathsf T_\omega\}_{\omega \in \Omega^\bullet_{'\langle n]}}$ in the way that is described in formula  (\ref{eq11.17a}). \hfill $\diamondsuit$
\end{itemize}
\end{theorem}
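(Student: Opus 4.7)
The plan is to reduce Theorem 4.7 to the two shadow-theoretic results already established, namely Theorem 3.1 and Theorem 3.2, by using Theorem 4.1 to certify that the trajectory projection $\Gamma\colon X\to\mathcal T(v)$ is a legitimate shadow and that the $v$-induced stratifications coincide with the shadow-induced stratifications. First, I would invoke Theorem 4.1 to assert that $\Gamma \in \mathsf{Shad}(X,\mathsf E\Rightarrow\mathsf T)$, where the poset is $\mathcal S = \Omega^\bullet_{'\langle n]}$, the indices $\mu,\mu'$ are the norm $|\sim|$ and the reduced norm $|\sim|'$, and the model maps $\{p_\omega\colon \mathsf E_\omega \to \mathsf T_\omega\}$ are the ones supplied by Theorems 7.4 and 7.5 of \cite{K3} via the canonical coordinates of (\ref{eq2.4}).

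Next, I would match the two stratifications. By construction, the pure stratum $X(v,\omega)$ equals $\Gamma^{-1}(\mathcal T(v,\omega))$, and its partition into $X^\circ(v,\omega)$ and $X^\partial(v,\omega)$ is the partition used in (\ref{eq11.32}), which is identical to the one used in Definition 3.1 for the shadow $\Gamma$. Hence the filtrations $\{X^v_{-j}\}$ and $\{DX^v_{-j}\}$ from Section 4 coincide with the filtrations $\{X^{\Gamma\bullet}_{-j}\}$ and $\{DX^\Gamma_{-j}\}$ from Section 3, and consequently
\[
\mathsf C^{n-k}_\mho(DX,v)=\mathsf C^{n-k}_\mho(DX,\Gamma),\qquad \mathsf C^{n-k}_\mho(X,v^\circ)=\mathsf C^{n-k}_\mho(X,\Gamma^\circ),
\]
and similarly for the coboundary subspaces $\mathsf B^{n-k}_\mho$. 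The quotient $l_1$-norms $|[\sim]|_\mho$ on each side therefore agree, and the Poincaré-duality localizing operators (\ref{eq11.36}) are literally the operators $\mathcal L_{k+1}^{\Gamma,\mho}$ and $\mathcal M_{k+1}^{\Gamma,\mho}$ of (\ref{eq11.11}) and (\ref{eq11.11a}) applied to $F=\Gamma$.

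With this identification in place, the first bullet follows from Theorem 3.1 applied to the shadow $\Gamma$: the amenability hypothesis on the images of $\pi_1$ of the boundary components of $\d X$ in $\pi_1(DX)$ is precisely the hypothesis of Theorem 3.1, and the resulting inequality $\|h\|_\D \leq \Theta\cdot|[\mathcal L_{k+1}^{\Gamma,\mho}(h)]|_\mho$ is the desired (\ref{eq11.37}). The universal constant $\Theta$ provided by formula (\ref{eq11.17}) depends only on the stratified simplicial semi-norms of the model normal disks $D_\s^{k+1}\subset D\mathsf E_{\omega_\s}$, which in the traversing context are determined entirely by the polynomial models (\ref{eq2.4}); since $\Omega^\bullet_{'\langle n]}$ is finite and these models are universal, $\Theta$ depends only on $n$ (and implicitly on $k$). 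The second bullet is obtained identically by applying Theorem 3.2 to $\Gamma$, using amenability in $\pi_1(X)$ and the operator $\mathcal M_{k+1}^{\Gamma,\mho}=\mathcal M_{k+1}^{v,\mho}$, which yields (\ref{eq11.38}) with the constant $\Theta^\circ$ of (\ref{eq11.17a}).

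The only genuinely nontrivial point in this plan is the verification, carried out in Theorem 4.1, that $\Gamma$ satisfies all five bullets of Definition 3.1; once that is in hand, the present theorem is essentially a tautological translation of Theorems 3.1 and 3.2 into the traversally generic setting, so I expect no additional obstacle beyond bookkeeping of the stratifications and noting that the constants $\Theta,\Theta^\circ$ inherited from Section 3 are intrinsic to the universal local models of traversally generic flows.
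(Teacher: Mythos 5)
Your proposal is correct and follows the paper's own route exactly: the paper states Theorem 4.2 as an immediate consequence of Theorem 4.1 (certifying $\Gamma\colon X\to\mathcal T(v)$ as a shadow) combined with Theorems 3.1 and 3.2, with no further proof given. Your explicit matching of the $v$-induced and $\Gamma$-induced stratifications, filtrations, chain groups, and localizing operators is precisely the bookkeeping that the paper leaves implicit.
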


Theorem \ref{th11.6}, together with Corollary \ref{cor11.2} and Corollary \ref{cor11.3}, produces

\begin{corollary}\label{cor11.10} Let an integral homology class $h \in H_{k+1}(DX)$ be realized my a singular oriented pseudo-manifold $f: M \to DX$, $\dim(M) = k+1$.  

Under the hypotheses of Theorem 4.2, 
the number of intersections of the cycle $f(M)$ with the locus $DX^v_{-(k+1)}$ is greater than or equal to $\Theta^{-1} \cdot \|h\|_\D$, provided that $f$ is in general position with the subcomplex $DX^v_{-(k+1)} \subset DX$. 

If a class $h \in H_{k+1}(X)$ is represented by a singular pseudo-manifold $f: M \to X$, then $(\Theta^\circ)^{-1} \cdot \|h\|_\D$ gives a lower bound of the number of transversal intersections of the absolute cycle $f(M)$ with the locus $X^{v \circ}_{-(k+1)} =_{\mathsf{def}} \mathsf{int}(X) \cap X^v_{-(k+1)}$. 
\hfill $\diamondsuit$
\end{corollary}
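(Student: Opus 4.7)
The plan is to reduce the statement to the shadow-theoretic analogues, Corollary 3.1 and Corollary 3.2, via the bridge provided by Theorem 4.1. Since the conclusion of Corollary 4.2 is literally the transcription of those two corollaries with the shadow $F$ replaced by the structural map $\Gamma: X \to \mathcal T(v)$ of a traversally generic field, the work consists of checking that this replacement is legitimate and that the stratification-theoretic labels match up.

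First, I would invoke Theorem 4.1 to observe that $\Gamma$ lies in $\mathsf{Shad}(X, \mathsf E \Rightarrow \mathsf T)$, where the system of model maps $\{p_\omega : \mathsf E_\omega \to \mathsf T_\omega\}_{\omega \in \Omega^\bullet_{'\langle n]}}$ is the canonical one for traversally generic flows. In particular, the flow-induced stratification $\Omega^\bullet(DX, v)$ of the double, described in (\ref{eq11.32}) and the lines that follow, is exactly the shadow-induced stratification $\mathcal S^\bullet_\Gamma(DX)$ of Section 3. Consequently the filtration $\{DX^v_{-j}\}_j$ coincides with $\{DX^\Gamma_{-j}\}_j$, and similarly $X^{v\circ}_{-(k+1)} = X^{\Gamma\circ}_{-(k+1)}$. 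Under this identification, the Poincar\'e duality localizing operators $\mathcal L^v_{k+1}$ and $\mathcal M^v_{k+1}$ of (\ref{eq11.36}) are precisely the operators $\mathcal L^{\Gamma,\mho}_{k+1}$ and $\mathcal M^{\Gamma,\mho}_{k+1}$ associated with the shadow $\Gamma$.

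Second, I would verify that the amenability hypotheses needed for Corollaries 3.1 and 3.2 are supplied by the hypotheses of Corollary 4.2, which, via Theorem 4.2, already come equipped with the required amenability assumptions on $\pi_1(\d X) \to \pi_1(DX)$ and $\pi_1(\d X) \to \pi_1(X)$ respectively. Given a pseudo-manifold representative $f: M \to DX$ in general position with the $(n-k)$-dimensional subcomplex $DX^v_{-(k+1)}$, each transverse intersection point $x_\sigma$ lies in some top stratum $\sigma \in \mathcal S^\bullet_\Gamma(DX)$ of dimension $n-k$. Taking a sufficiently narrow regular neighborhood $U$ of $DX^v_{-(k+1)}$, the localized class $h_U \in H_{k+1}(U, \d U)$ is represented by the sum $\sum_{x_\sigma} \pm [D^{k+1}_{x_\sigma}]$ of the disjoint normal disks centered at the transverse intersections, exactly as in formula (\ref{eq11.14}); each coefficient has $|r_\sigma| = 1$, so $|[h_U]| \leq \#(f(M) \cap DX^v_{-(k+1)})$. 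Plugging this bound into inequality (\ref{eq11.37}) of Theorem 4.2 yields
\[
\|h\|_\Delta \leq \Theta \cdot \#\bigl(f(M) \cap DX^v_{-(k+1)}\bigr),
\]
which is the first assertion after rearrangement.

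For the second assertion, I would repeat the argument verbatim with $\mathcal M^{v,\mho}_{k+1}$ in place of $\mathcal L^{v,\mho}_{k+1}$, using inequality (\ref{eq11.38}) and the fact that a cycle representing an absolute class $h \in H_{k+1}(X)$ can be pushed into $\mathsf{int}(X)$ and hence only meets strata from $\mathcal S^\circ_\Gamma(X)$, so that only the normal disks $\{D^{k+1}_{x_\sigma}\}_{\sigma \in \mathcal S^\circ_\Gamma(X)}$ participate in the localization, giving the improved universal constant $\Theta^\circ$ from (\ref{eq11.17a}). No genuine obstacle is anticipated; the only subtle point is making sure that a generic small perturbation of $f$ intersects only the top-dimensional (i.e., $(n-k)$-dimensional) strata of $DX^v_{-(k+1)}$ transversely and misses strata of lower dimension, which follows from standard general-position arguments applied to the smooth Whitney stratification of $\mathcal T(v)$ (cited in Theorem 4.1) pulled back via $\Gamma$.
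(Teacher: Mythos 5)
Your proposal matches the paper's derivation exactly: the paper states that Corollary 4.1 follows by combining Theorem 4.1 (which realizes $\Gamma: X \to \mathcal T(v)$ as a shadow) with Corollaries 3.1 and 3.2, and your argument is precisely this reduction, spelled out. The extra detail you provide—redeveloping the normal-disk localization inline rather than citing the shadow corollaries—is faithful to the proof of Corollaries 3.1 and 3.2 and introduces nothing that departs from the paper's route.
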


The following example is a product of the author's conversations with Larry Guth. \smallskip

\noindent{\bf Example 4.1.}
Consider a fibration $p: E \to M$ whose base is a closed oriented hyperbolic manifold of dimension $k+1$ and whose fiber $F$ is a closed manifold. Assume that the $(n+1)$-dimensional manifold $E$ is oriented and that $p$ admits a section $s: M \to E$. In the complement to $s(M)$, pick a smooth codimension zero manifold $V$ such that  $\{\pi_1(\d V, pt)\}_{pt \in \d V}$ are amenable groups. Let $X = E \setminus \textup{int}(V)$. 

Recall that $\|[M]\|_\D$ is proportional to the hyperbolic volume $\textup{vol}(M)$ with an universal positive proportionality constant \cite{Gr}. Since $s$ is a section, and the simplicial semi-norm does not increase under the continuous maps, it follows that the simplicial norm of $s_\ast([M]) \in H_{k+1}(X)$ is proportional to $\textup{vol}(M)$ with the same proportionality constant.  

For any traversally generic vector field $v$ on $X$, consider the locus $X^v_{-(k+1)}$. We can perturb the section $s$ so that  the intersections of the cycle $s(M)$ is transversal to  $X^v_{-(k+1)}$. 

According to Corollary \ref{cor11.10}, there exits a universal constant $\rho > 0$ so that, for any such $X$ and any traversally generic vector field $v$ on $X$,  the transversal intersections of the perturbed cycle $s(M)$ with the locus $X^v_{-(k+1)}$ has $\rho \cdot \textup{vol}(M)$  intersections at least.

Since by the choice of $V$, $s(M) \cap \d V = \emptyset$, it follows that there exist at least $\rho \cdot \textup{vol}(M)$ trajectories $\g$ of the reduced multiplicity $m'(\g) = k+1$, each of which has a nonempty intersection  with the section $s(M) \subset \textup{int}(X)$ (and evidently with $\d X = \d V$). 
\hfill $\diamondsuit$
\smallskip






It worth describing the important special case ``$h = [DX]$" of formula (3.16), 
being applied to the shadows that are produced by traversally generic flows. 

\begin{theorem}\label{th11.8} Let $X$ be a smooth compact connected and oriented $(n+1)$-manifold with boundary.  
 Assume that, for each connected component of the boundary $\d X$ the image of its fundamental group in $\pi_1(DX)$ is an amenable group.

Let $v$ be a traversally generic vector field on  $X$. Then 
\begin{eqnarray}\label{eq11.38}
\sum_{\omega \in \Omega^\bullet | \, |\omega|' = n} \; \theta(\omega) \cdot \#\big(\mathcal T(v, \omega)\big)\; \geq \; \big \|[DX]\big\|_\Delta,
\end{eqnarray}
where the universal constant $$\theta(\omega) =_{\mathsf{def}} \big\|[D\mathsf E_\omega,  D(\delta \mathsf E_\omega)] \big\|^{\Omega^\bullet_{'\langle n]}}_\D$$ is the $\Omega^\bullet_{'\langle n]}$-stratified simplicial volume of the model pair $(D\mathsf E_\omega,  D(\delta \mathsf E_\omega))$. 
\hfill $\diamondsuit$
\end{theorem}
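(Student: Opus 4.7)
The plan is to mimic the proof of Theorem \ref{th11.5}, using Theorem \ref{th11.6} to convert the traversally generic flow $v$ into a shadow $\Gamma : X \to \mathcal T(v)$, and then applying Gromov's Localization Lemma \ref{lem11.1} to the fundamental class $[DX] \in H_{n+1}(DX)$ with a neighborhood chosen to reflect the local model geometry at the top-codimension strata. The strata with $|\omega|' = n$ are exactly the zero-dimensional strata of $\mathcal T(v)$, so each corresponding trajectory $\g$ is isolated and sits inside a neighborhood $\mathsf{PL}$-homeomorphic to the model $\mathsf E_\omega$ by the last bullet of Definition 3.1.

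The first step is to verify the amenability hypothesis of Lemma \ref{lem11.1} applied to the classifying map $\b : DX \to K(\pi_1(DX), 1)$. This is identical to the check already carried out inside the proof of Theorem \ref{th11.3}: every stratum $\s$ of $\Omega^\bullet(DX, v)$ either lies inside $\d X$ (amenable image by hypothesis) or is a trivial open-interval bundle over a stratum $\mathcal T(v, \omega)$ that deformation-retracts via a boundary section onto a subset of $\d X$, which forces its $\pi_1$-image to be amenable as well. No new work is required here.

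Next I would choose a regular neighborhood $U \subset DX$ of $DX^v_{-(n+1)}$ so that every connected component of $U$ is precisely a model double $D\mathsf E_\omega$ attached to a unique trajectory $\g \in \mathcal T(v, \omega)$ with $|\omega|' = n$, with $\d U$ meeting that component along $D(\delta \mathsf E_\omega)$. This is possible because such a $\g$ is an isolated point in $\mathcal T(v)$, so a sufficiently small neighborhood of it in $\mathcal T(v)$ equals the entire $\mathsf T_\omega$; pulling back by $\Gamma$ produces $\mathsf E_\omega$, and doubling along $\delta \mathsf E_\omega$ gives the asserted local model by formula (\ref{eq2.4}) together with Definition 3.1. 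Applying Lemma \ref{lem11.1} with $j = n+1$ to $h = [DX]$ then yields
\[
\|[DX]\|_\D \; \leq \; \|[DX]_U\|_\D^{\,\Omega^\bullet(DX, v)}.
\]
Under excision, the restriction $[DX]_U \in H_{n+1}(U, \d U)$ decomposes as the sum, over the components of $U$, of the model relative fundamental classes $[D\mathsf E_{\omega_\g}, D(\delta \mathsf E_{\omega_\g})]$, one per trajectory of top reduced multiplicity. The triangle inequality for the stratified simplicial semi-norm then gives
\[
\|[DX]_U\|_\D^{\,\Omega^\bullet(DX, v)} \; \leq \; \sum_{\g :\; |\omega_\g|' = n} \theta(\omega_\g) \; = \; \sum_{|\omega|' = n} \theta(\omega) \cdot \#\big(\mathcal T(v, \omega)\big),
\]
which combined with the previous inequality is the desired bound.

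The main technical obstacle is guaranteeing that the ambient $\Omega^\bullet(DX, v)$-stratification, when restricted to each model component $D\mathsf E_\omega$, is conjugate to the universal $\Omega^\bullet_{'\langle n]}$-stratification of that model, so that the ambient stratified simplicial semi-norm of $[D\mathsf E_\omega, D(\delta \mathsf E_\omega)]$ really agrees with the intrinsic universal constant $\theta(\omega)$. This conjugacy is exactly the content of the last bullet of Definition 3.1 together with the polynomial coordinate description (\ref{eq2.4}); once it is invoked, $\theta(\omega)$ depends only on $\omega$ and not on $X$ or $v$, which is what makes the resulting estimate universal.
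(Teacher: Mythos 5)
Your proof is correct and, since the paper itself omits an explicit proof (it merely remarks that the theorem is a special case of the localization machinery applied to the shadow $\Gamma:X\to\mathcal T(v)$ of a traversally generic field), your argument fills that gap along the intended lines.  The key move is the one you make: rather than taking the narrow regular neighborhood of the $0$-dimensional strata out of small normal disks $D^{n+1}_\s$ (as in the proof of Theorem \ref{th11.5}, which only yields the cruder bound $\Theta\cdot\kappa\cdot\#F(X)_{-n}$), you take $U$ to be the disjoint union of the full model doubles $D\mathsf E_{\omega_\gamma}$, one around each isolated trajectory of reduced multiplicity $n$.  This is still a legitimate neighborhood of $DX^v_{-(n+1)}$ for Lemma \ref{lem11.1}, because the $0$-dimensional strata sit in the interior of these compact model neighborhoods, and the amenability hypothesis needed by the Lemma for codimension $\leq n$ strata is verified exactly as in the proof of Theorem \ref{th11.3}.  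The restriction $[DX]_U$ then splits by disjointness into the relative fundamental classes of the model pairs, and the triangle inequality together with the last bullet of Definition 3.1 (which gives the stratification-preserving $\mathsf{PL}$-conjugacy of the ambient $\Omega^\bullet(DX,v)$-stratification near $\gamma$ with the universal stratification of $D\mathsf E_{\omega_\gamma}$) identifies each summand with the universal constant $\theta(\omega_\gamma)$.  This is precisely the per-$\omega$ refinement that the statement demands.

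One small slip worth correcting: you describe the local model as obtained by ``doubling along $\delta\mathsf E_\omega$''.  The doubling $DX=X\cup_{\d X}X$ identifies the two copies of $\mathsf E_\omega$ along the locus $\{\wp=0\}=\mathsf E_\omega\cap\d X$; the piece $\delta\mathsf E_\omega$ of $\d\mathsf E_\omega$ (the part mapping to $\mathsf S_\omega$) is \emph{not} the gluing locus but rather the part that survives to form the boundary $\d(D\mathsf E_\omega)=D(\delta\mathsf E_\omega)$.  Your conclusion that the component of $U$ around $\gamma$ is $(D\mathsf E_\omega,\,D(\delta\mathsf E_\omega))$ is right; only the description of how the doubling acts on the two boundary pieces of $\mathsf E_\omega$ is inverted.
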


The previous theorem represents a slight modification/improvement of Theorem 2 from \cite{AK}; below we state it for the reader's convenience.   

\begin{theorem}{\bf(Alpert, Katz)}\label{th11.9} Let $X$ be a smooth compact connected and oriented $(n+1)$-manifold with boundary.  
 Assume that, for each connected component of the boundary,  the image of its fundamental group in $\pi_1(DX)$ is an amenable group.
Let $v$ be a traversally generic vector field on  $X$. 

Then there is an universal constant $\rho(n) >  0$ such that, for any $X$ and $v$,  the cardinality of the set $\mathcal T(v)_{-n}$, formed by the trajectories of the maximal reduced multiplicity $n$,  satisfies the inequality 
\begin{eqnarray}\label{eq11.39}
 \#\big(\mathcal T(v)_{-n}\big)\, \geq \, \rho(n) \cdot \| [DX] \|_\D.
\end{eqnarray}
\hfill $\diamondsuit$
\end{theorem}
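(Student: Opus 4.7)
The plan is to derive Theorem \ref{th11.9} as an immediate numerical consequence of the just-stated Theorem \ref{th11.8}, using only the finiteness of the combinatorial universe that parametrizes the top codimension strata. Equivalently, one can short-circuit through Theorem \ref{th11.5}: once Theorem \ref{th11.6} has been established, the map $\Gamma \colon X \to \mathcal T(v)$ qualifies as a shadow in $\mathsf{Shad}(X, \mathsf E \Rightarrow \mathsf T)$ with $\mathcal S = \Omega^\bullet_{'\langle n]}$, and under the identification $F(X)_{-n} \leftrightarrow \mathcal T(v)_{-n}$ the conclusion \eqref{eq11.23} becomes exactly \eqref{eq11.39}. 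I will follow the Theorem \ref{th11.8} route because it is already stated in the form we need.

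First I would record the key finiteness input: the poset $\Omega^\bullet_{'\langle n]}$ depends only on $n$, and the subset $\{\omega \in \Omega^\bullet_{'\langle n]} : |\omega|' = n\}$ of top-reduced-multiplicity combinatorial types is finite (indexed by decorated divisors on $\gamma$ of total reduced multiplicity $n$ subject to the parity constraints recalled in Section 2). Consequently the quantities
\[
\theta(\omega) = \bigl\|[D\mathsf E_\omega,\, D(\delta \mathsf E_\omega)]\bigr\|_\Delta^{\Omega^\bullet_{'\langle n]}},
\]
appearing in Theorem \ref{th11.8}, form a finite set of nonnegative universal constants that depend only on $n$ and on the list of model maps $\{p_\omega \colon \mathsf E_\omega \to \mathsf T_\omega\}$. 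Set
\[
\Theta(n) \;=\; \max_{\{\omega \,:\, |\omega|' = n\}} \theta(\omega),
\]
which is a well-defined positive real (positivity comes from the fact that, for the maximal combinatorial types, $D\mathsf E_\omega$ carries an honest fundamental class whose stratified simplicial seminorm is strictly positive — this is the content of the argument from \cite{AK} reproduced in Theorem \ref{th11.5}).

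Second, I would apply Theorem \ref{th11.8} directly. The inequality
\[
\sum_{\{\omega \,:\, |\omega|' = n\}} \theta(\omega) \cdot \#\bigl(\mathcal T(v, \omega)\bigr) \;\geq\; \|[DX]\|_\Delta
\]
immediately yields, on replacing each coefficient $\theta(\omega)$ by the larger number $\Theta(n)$,
\[
\Theta(n) \cdot \sum_{\{\omega \,:\, |\omega|' = n\}} \#\bigl(\mathcal T(v,\omega)\bigr) \;\geq\; \|[DX]\|_\Delta.
\]
Since $\mathcal T(v)_{-n} = \bigsqcup_{\{\omega \,:\, |\omega|' = n\}} \mathcal T(v,\omega)$ by \eqref{eq11.28}, the left-hand sum is exactly $\#(\mathcal T(v)_{-n})$, and choosing $\rho(n) := \Theta(n)^{-1}$ completes the proof.

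The only substantive point — and the one I would expect a referee to scrutinize — is the positivity $\Theta(n) > 0$, i.e.\ that at least one of the model stratified simplicial volumes $\theta(\omega)$ does not vanish. This is not a free consequence of Theorem \ref{th11.8} (which remains true if both sides are zero). The cleanest way around this worry is to bypass Theorem \ref{th11.8} entirely and invoke Theorem \ref{th11.5} applied to the shadow $\Gamma \colon X \to \mathcal T(v)$ from Theorem \ref{th11.6}: Theorem \ref{th11.5} produces the universal positive $\theta$ as $(\Theta \cdot \kappa)^{-1}$, where $\Theta$ is the maximum of the stratified simplicial norms of normal disks $D_\sigma^{n+1}$ attached to the $0$-dimensional strata $\sigma \in \mathcal S^\bullet_F(DX)$, and $\kappa = \max_{|\omega|' = n}(|\omega| - |\omega|')$ bounds the fiber cardinalities of $\Gamma \colon DX^F_{-(n+1)} \to F(X)_{-n}$. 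Both constants manifestly depend only on $n$ and on the universal local models \eqref{eq2.4}, so $\rho(n)$ is universal and positive, completing the argument.
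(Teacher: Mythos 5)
Your Route~2 is correct and is precisely the derivation that the paper's own machinery is built to deliver. The paper gives no explicit proof of Theorem~\ref{th11.9}, attributing it to~\cite{AK}; but Theorem~\ref{th11.6} certifies that $\Gamma\colon X \to \mathcal T(v)$ belongs to $\mathsf{Shad}(X,\mathsf E \Rightarrow \mathsf T)$ with $\mathcal S = \Omega^\bullet_{'\langle n]}$, and Theorem~\ref{th11.5} applied to this shadow, together with the identification $\Gamma(X)_{-n} = \mathcal T(v)_{-n}$ (which is immediate from comparing (\ref{eq11.2}) with (\ref{eq11.28}) under $\mu' = |\cdot|'$), gives the inequality verbatim. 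The universality and positivity of $\theta = (\Theta\cdot\kappa)^{-1}$ are established inside the proofs of Theorems~\ref{th11.3} and~\ref{th11.5} exactly as you recount.

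On Route~1: your caution about $\Theta(n) > 0$ is reasonable but not strictly necessary. If $\max_{|\omega|'=n}\theta(\omega) = 0$, then the inequality in Theorem~\ref{th11.8} forces $\|[DX]\|_\Delta = 0$ for \emph{every} admissible $X$ and $v$, so (\ref{eq11.39}) would hold vacuously with, say, $\rho(n) = 1$. Thus Route~1 also succeeds, up to a trivial case split. Your instinct to route through Theorem~\ref{th11.5} instead is nonetheless the cleaner choice, since it avoids having to invoke Theorem~\ref{th11.8} (which is itself unproven in this paper and cited from~\cite{AK}) and directly inherits the positive universal constant. One small inaccuracy: you justify $\Theta(n) > 0$ by appealing to the stratified seminorm of the model pairs $(D\mathsf E_\omega, D(\delta\mathsf E_\omega))$, whereas the constant in Theorem~\ref{th11.5} is built from stratified norms of the normal disks $D_\sigma^{n+1}$ from (\ref{eq11.17}); these are related but not the same objects. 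Since you abandon Route~1 anyway, this does not affect the final argument.
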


\noindent{\bf Example 4.2.}
Take the surface $X \subset \R^2$ and the constant vertical field $v = \d_y$ on it as in Fig. 1. Since $X$ is a disk with $4$ holes, the double $DX$ is a closed surface of genus $4$. It admits a hyperbolic metric. By \cite{Gr}, it follows that  $\| [DX] \|_\Delta = 2(2\cdot 4 - 2) = 12.$

At the same time, $\#(DX^v_{-2}) = 30$ and $\#(\mathcal T(v)_{-1}) = 12$ (see Fig. 1 and Fig. 3). Since $\rho(1) \leq \#\big(\mathcal T(v)_{-n}\big)/ \| [DX] \|_\D$ for any connected compact surface $X$ such that $\| [DX] \|_\D \neq 0$ and  a traversally generic  $v$ on it, the universal constant $\rho(1)$  in  (\ref{eq11.39})  gets  a  bound:  $\rho(1) \leq 12/12 = 1$.

However, the field $v$ in Fig. 1 is not the ``optimal" one for $X$.  To optimize $v$, take the radial field on an annulus $A$. Delete tree convex disks from $A$ to form $X$ and restrict the radial field  to $X$.  For the restricted field $v$ on $X$, the graph $\mathcal T(v)$ is trivalent with 6 verticies. So we get  $\#(\mathcal T(v)_{-2}) = 6$ and $\#(DX^v_{-2}) = 18$. As a result, the universal constant must satisfy the inequality $\rho(1)\leq 6/12 = 1/2$. Similar calculations apply to any $2$-disk with $q \geq 2$ holes to produce $\rho(1)\leq 1/2$. Perhaps, $\rho(1) = 1/2$. \hfill $\diamondsuit$
\smallskip

Although desirable, an exact computation of the universal constants $\{\theta(\omega)\}$ in formula (\ref{eq11.38})
is somewhat tricky. We can estimate the number of cells-strata of the top dimension $|\omega|' $ in the model space $\mathsf T_\omega$.   It looks that this number can be calculated by counting the chambers in which the discriminant of each polynomial  $$\wp_i(u, \vec x) =_{\mathsf{def}} (u-i)^{\omega_i}  + \sum_{j=0}^{\omega_i -2} x_{i,j} (u -i)^j$$ divides the space $\R^{\omega_i -1}$. 

For $\omega_\star =_{\mathsf{def}} (1\underbrace{2 \dots 2}_{n}1)$, the model space $\mathsf T_{\omega_\star}$ is formed by attaching $n$ copies of a half-disk $D^n_+$ to a disk $D^n$ along the coordinate hyperplanes which divide $D^n$ into $2^n$ quadrants. So the number $\kappa(\omega_\star)$ of $n$-dimensional cells-strata in $\mathsf T_{\omega_\star}$ is $2^n + n$ (see \cite{K3}).  Therefore, the pull-back of the $\Omega^\bullet$-stratification in $\mathsf T_{\omega_\star}$ divides the model space $\mathsf E_{\omega_\star}$ into $2^n + n$ chambers-cells of dimension $n+1$. As a result, the stratified simplicial norms satisfy the inequalities
$$\big\| [\mathsf E_{\omega_\star}, \delta \mathsf E_{\omega_\star}] \big \|_\D^{\Omega^\bullet_{'\langle n]}} \geq 2^n + n, \qquad \big\| [D\mathsf E_{\omega_\star}, D(\delta \mathsf E_{\omega_\star})]\big \|_\D^{\Omega^\bullet_{'\langle n]}} \geq 2^{n+1} + 2n.$$

It seems that $\kappa(\omega) < \kappa(\omega^\star)$ for any minimal $\omega \neq \omega^\star$. \smallskip

Of course, it is more desirable to get  \emph{lower} bounds of these stratified simplicial norms...
\smallskip
 
Combining Theorem \ref{th11.4} with Theorem \ref{th11.6}, we get the following proposition. 

\begin{theorem}\label{th11.10} Let $X$ be a smooth compact connected and oriented manifold with boundary, $\dim(X) = n+1$.  Let $v$ be a traversally generic vector field on  $X$.
\begin{itemize}
\item Assume that, for each connected component of the boundary $\d X$, the image of its fundamental group in $\pi_1(DX)$ is an amenable group.
Then, for each $k \in [-1, n]$,
\begin{eqnarray}\label{eq11.40}
\textup{rk}\big(\textup{im} (\mathcal L_{k+1}^{v, \mho})\big)\, \geq \,  \textup{rk}\big(H^{\mathbf \D}_{k+1}(DX)\big).
\end{eqnarray}
As a result,  $\textup{rk}\big(\mathsf C^{n-k}_\mho(DX, v)\big)$---the number of $(n- k)$-dimensional strata in the stratification $\Omega^\bullet(DX, v)$\footnote{see $(11.32)$} of  $DX$
---is greater than or equal to $\textup{rk}\big(H^{\mathbf \D}_{k+1}(DX)\big)$.\smallskip

\item Assume that, for each connected component of the boundary $\d X$, the image of its fundamental group in $\pi_1(X)$ is an amenable group.
Then, for each $k \in [-1, n-1]$,  
\begin{eqnarray}\label{eq11.40}
\textup{rk}\big(\textup{im} (\mathcal M_{k+1}^{v, \mho})\big)\, \geq \,  \textup{rk}\big(H^{\mathbf \D}_{k+1}(X)\big).
\end{eqnarray}
Thus,  $\textup{rk}\big(\mathsf C^{n-k}_\mho(X, v^\circ)\big)$---the number of $(n- k)$-dimensional strata in the stratification $\Omega^\bullet(X, v^\circ)$---is greater than or equal to $\textup{rk}\big(H^{\mathbf \D}_{k+1}(X)\big)$.  \hfill $\diamondsuit$

\end{itemize}
\end{theorem}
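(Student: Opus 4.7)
The plan is to derive Theorem 4.5 directly from the shadow version, Theorem \ref{th11.4}, by way of the bridging result Theorem \ref{th11.6}. First I would invoke Theorem \ref{th11.6} to certify that the trajectory-quotient map $\Gamma : X \to \mathcal T(v)$ attached to the traversally generic field $v$ is a legitimate $\mathsf{PL}$-shadow in the sense of Definition 3.1, with poset $\mathcal S = \Omega^\bullet_{'\langle n]}$, multiplicity functions $\mu = |\sim|$ and $\mu' = |\sim|'$, and local models $\{p_\omega : \mathsf E_\omega \to \mathsf T_\omega\}_{\omega \in \Omega^\bullet_{'\langle n]}}$ furnished by the polynomial normal form (\ref{eq2.4}). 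In particular, $\Gamma \in \mathsf{Shad}(X, \mathsf E \Rightarrow \mathsf T)$ for the appropriate pair $(\mathsf E, \mathsf T)$.

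The second step is a dictionary check. The flow-induced stratifications $\Omega^\bullet(X, v)$ and $\Omega^\bullet(DX, v)$ of Section 4 are literally the refined stratifications $\mathcal S^\bullet_\Gamma(X)$ and $\mathcal S^\bullet_\Gamma(DX)$ of Section 3 applied to $F = \Gamma$, because $X(v, \omega) = \Gamma^{-1}(\mathcal T(v, \omega))$ by construction. Consequently, the filtrations $\{X^v_{-j}\}$ and $\{DX^v_{-j}\}$, the differential complexes $\mathbf C^\mho_\ast(DX, v)$ and $\mathbf C^\mho_\ast(X, v^\circ)$ of (\ref{eq11.33})--(\ref{eq11.35}), and the Poincar\'e localization operators $\mathcal L^{v, \mho}_{k+1}$ and $\mathcal M^{v, \mho}_{k+1}$ of (\ref{eq11.36}) are, respectively, identical to $\{X^{\Gamma\bullet}_{-j}\}$, $\{DX^\Gamma_{-j}\}$, $\mathbf C^\mho_\ast(DX, \Gamma)$, $\mathbf C^\mho_\ast(X, \Gamma^\circ)$, $\mathcal L^{\Gamma, \mho}_{k+1}$, $\mathcal M^{\Gamma, \mho}_{k+1}$ of Section 3. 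The amenability hypotheses also match verbatim on both sides.

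With these identifications, both bullets of Theorem 4.5 follow from the two bullets of Theorem \ref{th11.4} applied to $F = \Gamma$, giving $\textup{rk}\bigl(\textup{im}(\mathcal L_{k+1}^{v, \mho})\bigr) \geq \textup{rk}\bigl(H^{\mathbf \D}_{k+1}(DX)\bigr)$ and $\textup{rk}\bigl(\textup{im}(\mathcal M_{k+1}^{v, \mho})\bigr) \geq \textup{rk}\bigl(H^{\mathbf \D}_{k+1}(X)\bigr)$. The rank statements on $\mathsf C^{n-k}_\mho(DX, v)$ and $\mathsf C^{n-k}_\mho(X, v^\circ)$---equivalently, the lower bounds on the numbers of $(n-k)$-dimensional strata in $\Omega^\bullet(DX, v)$ and $\Omega^\bullet(X, v^\circ)$---are then read off from the elementary sandwich
\[
\textup{rk}\bigl(\textup{im}(\mathcal L_{k+1}^{v, \mho})\bigr) \;\leq\; \textup{rk}\bigl(\mathsf C^{n-k}_\mho(DX, v)/\mathsf B^{n-k}_\mho(DX, v)\bigr) \;\leq\; \textup{rk}\bigl(\mathsf C^{n-k}_\mho(DX, v)\bigr),
\]
together with the observation that $\mathbf C^\mho_\ast(DX, v)$ is a free module with one generator per stratum (the analogue for $X$ being obtained by the same sandwich with $\mathcal M$ in place of $\mathcal L$).

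All the substantive difficulty has been front-loaded into Theorems \ref{th11.4} and \ref{th11.6}: the hard work of amenable localization coupled with Poincar\'e duality (regular neighborhoods, the presentation of $h_U$ via normal disks $\{D^{k+1}_\s\}$, and the universal stratified simplicial norms $\|[D^{k+1}_\s]\|^{\mathcal S^\bullet_F(U)}_\D$) is done in the former, while the verification that the $\Omega^\bullet_{'\langle n]}$-strata of a traversally generic flow carry the trivial-fibration and model-chart structure demanded by Definition 3.1 is done in the latter. The only point in the present argument that deserves attention is to check that amenability of the $\pi_1(DX)$- or $\pi_1(X)$-image of each boundary component propagates to amenability of the $\pi_1$-image of every stratum $\s \in \mathcal S^\bullet_\Gamma$; but this propagation was already carried out inside the proof of Theorem \ref{th11.3} using the trivial fibration $\Gamma : X^\circ(v, \omega) \to \mathcal T(v, \omega)$ and the boundary section $\rho$ over $\mathcal T(v, \omega)$, and is reused here without change. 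Thus I do not anticipate any new obstacle beyond this bookkeeping.
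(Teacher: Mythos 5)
Your proof is correct and is essentially the paper's own argument: the paper prefaces Theorem \ref{th11.10} with precisely the sentence ``Combining Theorem \ref{th11.4} with Theorem \ref{th11.6}, we get the following proposition'' and gives no further details. Your dictionary between $\Omega^\bullet(X,v)$, $\Omega^\bullet(DX,v)$ and the shadow stratifications $\mathcal S^\bullet_\Gamma$, and your observation that the localization operators and amenability hypotheses transcribe verbatim under $F = \Gamma$, is exactly the bookkeeping the paper leaves implicit.
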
 

The first bullet  in Theorem \ref{th11.10}, together with the definitions of suspension complexities, leads to

\begin{theorem}\label{th11.11}  Let $X$ be a smooth compact connected and oriented manifold with boundary, $\dim(X) = n+1$. Assume that, for each connected component of the boundary $\d X$, the image of its fundamental group in $\pi_1(DX)$ is an amenable group. Then, for each $k \in [-1, n]$,
\begin{eqnarray}\label{eq11.41}
\mathbf{\Sigma tc}^{\mathsf{lex}}_{\mathsf{trav}}(X)\, \geq \, \mathbf{\Sigma c}^{\mathsf{lex}}_{\mathsf{shad}}(X) \, \geq \, \mathbf{rk}\big(H^{\mathbf \D}_\ast\big (DX\big)\big),
\end{eqnarray}
where the vectorial inequality is understood as the inequality among all the the corresponding components of the participating vectors. The vector $\mathbf{rk}\big(H^{\mathbf \D}_\ast\big (DX\big)\big)$ has been introduced prior to formula (3.23). 
 \hfill $\diamondsuit$
\end{theorem}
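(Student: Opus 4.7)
The plan is to derive Theorem \ref{th11.11} by chaining together two inequalities already assembled earlier: the first, $\mathbf{\Sigma tc}^{\mathsf{lex}}_{\mathsf{trav}}(X) \geq \mathbf{\Sigma c}^{\mathsf{lex}}_{\mathsf{shad}}(X)$, is tautological once Theorem \ref{th11.6} is invoked, while the second, $\mathbf{\Sigma c}^{\mathsf{lex}}_{\mathsf{shad}}(X) \geq \mathbf{rk}(H^{\mathbf \D}_\ast(DX))$, is a vectorial repackaging of Corollary \ref{cor11.5} (formula (\ref{eq11.24})), which in turn rests on the first bullet of Theorem \ref{th11.4}.

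For the first inequality, I would start with an arbitrary $v \in \mathcal V^\ddagger(X)$ and let $\Gamma_v : X \to \mathcal T(v)$ be the shadow produced by Theorem \ref{th11.6}. Unwinding the definitions, the $v$-induced stratification $\Omega^\bullet(DX, v)$ and the $\Gamma_v$-induced stratification $\mathcal S^\bullet_{\Gamma_v}(DX)$ coincide stratum-by-stratum, because $X^\circ(v, \omega) = X^\circ(\Gamma_v, \omega)$ and $X^\partial(v, \omega) = X^\partial(\Gamma_v, \omega)$ directly from the construction of $\Gamma_v$. Hence the relative groups in (\ref{eq11.33}) and (\ref{eq11.7}) agree, and therefore $\Sigma tc^j(X, v) = \Sigma c^j(X, \Gamma_v)$ for every $j \in [0, n+1]$; that is, $\mathbf{\Sigma tc}(X, v) = \mathbf{\Sigma c}(X, \Gamma_v)$ as vectors. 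The correspondence $v \mapsto \Gamma_v$ therefore embeds $\mathcal V^\ddagger(X)$ into $\mathsf{Shad}(X, \mathsf E \Rightarrow \mathsf T)$ in a complexity-vector-preserving fashion, so the lexicographic minimum over the smaller set cannot be less than the lexicographic minimum over the larger one.

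For the second inequality, I would pick a shadow $F^\ast \in \mathsf{Shad}(X, \mathsf E \Rightarrow \mathsf T)$ that attains the lexicographic minimum, so that $\mathbf{\Sigma c}^{\mathsf{lex}}_{\mathsf{shad}}(X) = \mathbf{\Sigma c}(X, F^\ast)$ (such an $F^\ast$ exists because the collection of complexity vectors is a subset of $\Z_{\geq 0}^{n+2}$, where the lexicographic infimum is attained). Applying the first bullet of Theorem \ref{th11.4} to this particular $F^\ast$ for each $k \in [-1, n]$, using the amenability hypothesis on the boundary fundamental groups, yields
\[
\Sigma c^{n-k}(X, F^\ast) = \textup{rk}\big(\mathsf C^{n-k}_\mho(DX, F^\ast)\big) \;\geq\; \textup{rk}\big(H^{\mathbf \D}_{k+1}(DX)\big).
\]
Since every component of $\mathbf{\Sigma c}(X, F^\ast)$ then dominates the corresponding component of $\mathbf{rk}(H^{\mathbf \D}_\ast(DX))$, the asserted vectorial inequality follows.

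The one point worth being careful about --- and the principal obstacle if the definitions were arranged slightly differently --- is that the lexicographic minimum of the vector $\mathbf{\Sigma c}(X, F)$ over $F$ is in general \emph{not} the vector of componentwise minima; a priori one cannot optimize every entry simultaneously. The argument above sidesteps this issue by fixing a single optimal $F^\ast$ and applying Theorem \ref{th11.4} at every $k$ to that same shadow, which is precisely the advantage that the lexicographic variant of Definition 3.8 offers over the naive ``componentwise'' optimum, and is why Theorem \ref{th11.11} is stated with $\mathbf{\Sigma tc}^{\mathsf{lex}}_{\mathsf{trav}}$ and $\mathbf{\Sigma c}^{\mathsf{lex}}_{\mathsf{shad}}$ rather than their componentwise analogues.
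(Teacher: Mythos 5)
Your proposal is correct and reproduces the argument the paper compresses into its one-line attribution to Theorem \ref{th11.10} and Remark 4.4: the left inequality is exactly Remark 4.4 (traversally generic fields yield shadows, preserving complexity vectors, so the lex-minimum over the smaller set dominates), and the right inequality comes from applying the first bullet of Theorem \ref{th11.4} componentwise, as already recorded in Corollary \ref{cor11.5} and formula (\ref{eq11.24}). The only cosmetic difference is that you fix a single lex-optimal shadow $F^\ast$ and apply Theorem \ref{th11.4} to it, whereas the paper passes through the componentwise minimum $\mathbf{\Sigma c}_{\mathsf{shad}}(X)$; both variants yield the same chain of inequalities.
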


Recall that, for $(n+1)$-dimensional $X$ and a traversally generic $v$ on it, $\# \sup(\omega) \leq n+2$  for all $\omega$'s (\cite{K2}). By the construction of the stratification $\Omega^\bullet(DX, v)$ of the double $DX$ (see Fig. 1 and Fig. 3), we get a formula which is similar to (\ref{eq11.30a}).  It connects the combinatorics of $\Omega^\bullet(DX, v)$ to the combinatorics of the stratification $\Omega^\bullet(\mathcal T(v))$ of $\mathcal T(v)$ by the connected components of the strata $\mathcal T(v, \omega)$:
\begin{eqnarray}
\# \pi_0\big(DX^v_{-j}\big) = \sum_{\{\omega | \, |\omega|' = j\}} \#\sup(\omega) \cdot \# \pi_0(\mathcal T(v, \omega)) \nonumber \\
 +\, 2 \cdot  \sum_{\{\hat\omega | \, |\hat\omega|' = j+1\}} (\#\sup(\hat\omega) -1) \cdot \# \pi_0(\mathcal T(v, \hat\omega))\nonumber \\
 \leq \; (n+2) \Big(\sum_{\{\omega | \, |\omega|' = j\}} \# \pi_0(\mathcal T(v, \omega)) + 2 \cdot \sum_{\{\hat\omega | \, |\hat\omega|' = j+1\}} \# \pi_0(\mathcal T(v, \hat\omega)) \Big)\nonumber \\
 = \; (n+2)\big( \# \pi_0\big(\mathcal T(v)^\circ_{-j}\big) + 2\cdot \# \pi_0\big(\mathcal T(v)^\circ_{-(j+1)}\big)\big). \nonumber 
\end{eqnarray}

Therefore, the suspended complexity of $v$ can be estimated in terms of the $v$-complexities: 
$$\Sigma tc^{n+1-j}(X, v) \leq (n+2)\big(tc^{n-j}(v)  + 2 \cdot tc^{n-j-1}(v)\big).$$
By  Theorem \ref{th11.10} and under its hypotheses, for each $k$, we get 
\begin{eqnarray}\label{eq11.42A}
\textup{rk}\big(H^{\mathbf \D}_{k+1}(DX)\big) \leq (n+2)\big(tc^{n-k}(v)  + 2 \cdot tc^{n-k-1}(v)\big), \\
\textup{rk}\big(H^{\mathbf \D}_{k}(X)\big) \leq (n+2)\big(tc^{n-k}(v)\big). \nonumber
\end{eqnarray} 

We notice that if $tc^{n-k}(v) = 0$, then by Remark 4.1, 
$tc^{n-k-1}(v) = 0$. Therefore,  $tc^{n-k}(v) = 0$ implies $H^{\mathbf \D}_{k+1}(DX) = 0$ and $H^{\mathbf \D}_{k}(X) = 0$.
\smallskip

\noindent{\bf Example 4.3.} 
Let $\{M_i\}_{1\leq i\leq N}$ be several closed orientable surfaces of  genera $g(M_i) \geq 2$. Consider the connected sum $$Y =_{\mathsf{def}} (M_1\times S^1)\, \#\, (M_2\times S^1)\, \# \,\dots \#\, (M_N\times S^1),$$  and let $Z = Y \setminus B^3$, the complement to a smooth $3$-ball. 
Put $\Sigma(M_i)  =_{\mathsf{def}} M_i \times S^1$.  

We may assume that the $1$-handles $\{H_i \approx S^2 \times D^1\}_{1\leq i \leq N-1}$, participating in the connected sum construction of $Y$, are attached to the complements to the surfaces $M_i \times \star_i $ in $M_i \times S^1$ and to the complement of $B^3$. 

Consider a map $f_j : Z \to \Sigma(M_j)$ which 
is a homeomorphism on $M_j\times \star_i$. We construct $f_j$ in stages. First, we map each $1$-handle $H_i$ $ \subset Y$ to its core $D^1_i$ so that $Y$ is mapped to the union $W$ of $\Sigma(M_i)$'s to which the $1$-cores $D^1_i$'s are attached; each core is attached at a point $a_i \in \Sigma(M_i)$ and at a point $b_{i+1} \in \Sigma(M_{i+1})$. Under $f_j$, each of the two $3$-disks, $D^3_{a_i}$ and $D^3_{b_{i+1}}$, from $D^3 \times \d D^1_i$ is mapped to its center $a_i$ or $b_{i+1}$. Finally for each $j \in [2, N-1]$, we map the complement  $Y \setminus \big(\Sigma(M_j) \setminus (D^3_{b_j} \cup D^3_{a_j})\big)$ to $a_j \coprod b_{j}$.  For $j = 1$ and $j = N$, the constructions of $f_1$ and $f_N$ are similar.  

For each $j$, by an isotopy argument, we may assume that the ball $B^3$ belongs to $Y \setminus \Sigma(M_j)$ and then restrict $f_j$ to $Z$.    

We claim that the $2$-dimensional classes $\{I_\ast[M_i] \in H_2^{\mathbf \D}(Z)\}_i$, where $I: \coprod_i M_i  \to Z$ is the obvious embedding,  are linearly independent. Indeed, assume that some combination $h =_{\mathsf{def}} \sum_i r_i I_\ast[M_i]$ has the property $\|h\|_\D = 0$. Then $\|(f_j)_\ast (h)\|_\D =0$. On the other hand, $$(f_j)_\ast (h) = r_j\cdot (I_j)_\ast[M_j] \in H_2(\Sigma(M_j)) \approx \R,$$ where the imbedding $I_j: M_j \subset \Sigma(M_j)$, being composed with $f_j$, produces the identity map of $M_j$. 
Therefore by the property of the simplicial semi-norm not to increase under continuous maps \cite{Gr}, $\|(I_j)_\ast[M_j]\|_\D =  \|[M_j]\|_\D \neq 0$. Thus $r_j = 0$. \smallskip

Let $X$ be a compact smooth $3$-fold which is homotopy equivalent to $Z$ and has a spherical boundary. Let $v$ be a traversally generic field on $X$. Since $\pi_1(\d X) = 0$, by Theorem \ref{th11.10} and following the arguments that lead to (\ref{eq11.30a}) and (\ref{eq11.42A}),  we count the $1$-dimensional connected strata in the stratification $\Omega^\bullet(DX, v)$ to get the inequality
\[
6 \cdot \#\mathcal T(v, 1221) + 2 \cdot \#\mathcal T(v, 13) + 2 \cdot \#\mathcal T(v, 31)\]
\[
 + \, 3 \cdot \#\pi_0(\mathcal T(v, 121)) + \#\pi_0(\mathcal T(v, 2))\, \geq \, \textup{rk}\big(H_2^{\mathbf \D}(DZ)\big) \geq 2N.
\]
Here the coefficients  6, 2, 2, 3, and 1 next to the cardinalities are determined by the cardinalities of the support of the corresponding combinatorial types $\omega = 1221, 13, 31, 121, 2$.

Note that $\|[DX]\|_\D = 0$. So, for $k + 1 = 3$, Theorem \ref{th11.10} provides a trivial estimate for 
 \[
6 \cdot \#\mathcal T(v, 1221) + 2 \cdot \#\mathcal T(v, 13) + 2 \cdot \#\mathcal T(v, 31),
\]
the number of $0$-dimensional strata in $\Omega^\bullet(DX, v)$.  \hfill $\diamondsuit$
\smallskip

Let $\pi =_{\mathsf{def}} \pi_1(X)$ and $\Pi =_{\mathsf{def}} \pi_1(DX)$ . Utilizing formula (\ref{eq11.42A}), Theorems 4.2 
and 4.5  
expose the groups $H^{\mathbf \D}_{k+1}(DX) \subset H^{\mathbf \D}_{k+1}(\Pi) $ and  $H^{\mathbf \D}_{k}(X) \subset H^{\mathbf \D}_{k}(\pi)$ as \emph{obstructions} to the existence of globally $k$-convex traversing flows on $X$.

\begin{corollary}\label{cor11.11} Let $X$ be a smooth compact connected and oriented  $(n+1)$-manifold with boundary. Assume that $X$ admits a globally $k$-convex  traversally generic vector field $v$. 

If, for each connected component of the boundary $\d X$, the image of its fundamental group in $\pi_1(DX)$ is amenable, then the simplicial semi-norm on $H_{j+1}(DX)$ is trivial  for all $j \geq k$. 

If,  for each connected component of the boundary $\d X$, the image of its fundamental group in $\pi_1(X)$ is  amenable, then he simplicial semi-norm is trivial on $H_{j}(X)$ 
for all $j \geq k$. 

In particular, if $X$ admits a  vector field such that $\mathcal T(v)_{-n} = \emptyset$, then $\| [DX]\|_\D = 0$ and $H_n^{\mathbf \D}(X) = 0$. \hfill $\diamondsuit$
\end{corollary}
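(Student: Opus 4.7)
The plan is to deduce the claim directly from Theorem \ref{th11.10}, or more precisely from the derived inequalities (\ref{eq11.42A}), by tracking how global $k$-convexity collapses the $v$-induced stratifications in the relevant dimensions.

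First I would unwind Definition 2.2: global $k$-convexity of $v$ says $m'(\g) < k$ for every $v$-trajectory $\g$, equivalently every combinatorial pattern $\omega$ realized by $v$ satisfies $|\omega|' < k$. By the definition of the filtration (\ref{eq11.28}), this is exactly $\mathcal T(v)_{-k} = \emptyset$, and since the filtration is decreasing, $\mathcal T(v)_{-j} = \emptyset$ for all $j \geq k$. Consequently $\mathsf C^\mho_{n-j}(\mathcal T(v))$ has no generators and $tc_{n-j}(v) = 0$ for all $j \geq k$; this is also the content of Remark 4.1.

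Next I would invoke the two inequalities (\ref{eq11.42A}),
\[
\textup{rk}\big(H^{\mathbf\D}_{j+1}(DX)\big) \;\leq\; (n+2)\big(tc_{n-j}(v) + 2\, tc_{n-j-1}(v)\big), \qquad \textup{rk}\big(H^{\mathbf\D}_{j}(X)\big) \;\leq\; (n+2)\, tc_{n-j}(v),
\]
valid under the respective amenability hypotheses on the components of $\d X$. For $j \geq k$ both $tc_{n-j}(v)$ and $tc_{n-j-1}(v)$ vanish by the previous step (note $j \geq k$ forces $j+1 \geq k$ as well), so the right-hand sides are zero. Hence $H^{\mathbf\D}_{j+1}(DX) = 0$ and $H^{\mathbf\D}_{j}(X) = 0$, which by the very definition $H^{\mathbf\D}_\ast = H_\ast/\mathcal K^{\D = 0}_\ast$ is equivalent to the simplicial semi-norm being identically zero on $H_{j+1}(DX)$ and on $H_{j}(X)$, respectively.

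The ``in particular'' clause is the specialization $k = n$: the hypothesis $\mathcal T(v)_{-n} = \emptyset$ is precisely global $n$-convexity, and feeding $j = n$ into the two bullets gives $\|[DX]\|_\D = 0$ (as $[DX] \in H_{n+1}(DX)$ and the semi-norm is trivial there) together with $H^{\mathbf\D}_n(X) = 0$. I do not expect any real obstacle: all the analytic content has been absorbed into Theorem \ref{th11.10} and the derivation of (\ref{eq11.42A}). The only care needed is the bookkeeping of the index shift $j \leftrightarrow n-j$ and the observation that Remark 4.1 propagates the vanishing of $tc_{n-k}(v)$ simultaneously to $tc_{n-j}(v)$ and $tc_{n-j-1}(v)$ throughout the range $j \geq k$.
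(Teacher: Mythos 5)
Your argument is correct and follows essentially the same route as the paper: translate global $k$-convexity into the vanishing $\mathcal T(v)_{-j}=\emptyset$ (hence $tc_{n-j}(v)=0$) for all $j\geq k$ via Remark 4.1, then feed these vanishings into the two inequalities of formula (\ref{eq11.42A}) (derived from Theorem \ref{th11.10}) to conclude $H^{\mathbf\D}_{j+1}(DX)=0$ and $H^{\mathbf\D}_j(X)=0$ under the respective amenability hypotheses. The ``in particular'' clause is, as you note, just the case $k=n$.
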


The next theorem, proven in \cite{AK},  should be compared with Theorem 7.5 from \cite{K}, 
its older $3D$-relative. In a way, this theorem is the source of motivation for developing the machinery of amenable localization in \cite{AK} and in the present paper. For $3$-folds $M$ with a simply-connected boundary, Theorem 4.7 below is  known with $c(2) = 1/ \textup{Vol}(\Delta^3)$, the inverse of the volume of the ideal tetrahedron in the hyperbolic space (see \cite{K}).

\begin{theorem}[{\bf Alpert, Katz}]\label{th11.12} 
Let $M$ be a closed, oriented \emph{hyperbolic} manifold of dimension $n + 1 \geq 2$.  Let $X$ be the space obtained by deleting a domain $U$ 
from $M$, such that $U$ is contained in a ball $B^{n+1} \subset M$. 
Let $v$ be a traversally generic vector field on $X$.  

Then the cardinality of the set of $v$-trajectories of the maximal reduced multiplicity $n$ satisfies the inequality
\begin{eqnarray}\label{eq11.42}
\#\big(\mathcal T(v)_{-n}\big) \geq c(n) \cdot \textup{Vol}(M),
\end{eqnarray}
where $c(n) > 0$ is an universal constant, and $\textup{Vol}(M)$ denotes the hyperbolic volume of $M$.  

\hfill $\diamondsuit$
\end{theorem}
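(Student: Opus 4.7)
The plan is to re-run the argument behind Theorem \ref{th11.9}, routing Gromov's Localization Lemma \ref{lem11.1} through the hyperbolic target $M$ rather than through $K(\pi_1(DX),1)$. This substitution buys two things: the left-hand side of the localization inequality becomes $\|[M]\|_\Delta$, which the Proportionality Theorem (\ref{eq11.3}) converts into a positive multiple of $\textup{Vol}(M)$; and the required amenability hypotheses become automatic from the geometric input $U \subset B^{n+1}$, without any a priori assumption on $\pi_1(\d X)$.

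First I build a degree-one map $\phi\colon DX \to M$. On the first copy of $X = M \setminus U$ inside $DX$, let $\phi$ be the inclusion. On the second copy $\tau(X)$, extend the identity on $\d X = \d U$ to a continuous map $\tau(X) \to B^{n+1} \subset M$; such an extension exists because $B^{n+1}$ is contractible and $(X, \d X)$ is a CW pair, so every obstruction vanishes. A generic point of $M \setminus B^{n+1}$ has a unique $\phi$-preimage, sitting in the first copy of $X$, so for an appropriate orientation $\phi_*[DX] = [M]$.

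Because $M$ is closed hyperbolic, it is aspherical, so I may use $\phi$ itself as the map $\alpha$ in Lemma \ref{lem11.1}. The key claim is that for every stratum $\sigma$ of $\Omega^\bullet(DX, v)$ of codimension less than $n+1$, the image $\phi_*(\pi_1(\sigma)) \subset \pi_1(M)$ is trivial. For boundary strata $\sigma \subset \d X \subset B^{n+1}$ this is immediate from contractibility of the ball. For interior strata $\sigma \subset X^\circ(v, \omega)$ (or its $\tau$-image), I recycle the fibration-plus-section argument from the proof of Theorem \ref{th11.3}: $\sigma$ is a trivial open-interval bundle over a connected component of $\mathcal T(v, \omega)$ that admits a section landing inside $\d X$, so $\sigma$ is homotopy equivalent to a subset of $\d X \subset B^{n+1}$, and triviality of its $\pi_1$-image in $\pi_1(M)$ follows.

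Applying Lemma \ref{lem11.1} with $\alpha = \phi$, $h = [DX]$, $j = n+1$, and a narrow regular neighborhood $U$ of $DX^v_{-(n+1)}$ now yields
\[
\|[M]\|_\Delta \;=\; \|\phi_*[DX]\|_\Delta \;\leq\; \|[DX]_U\|_\Delta^{\Omega^\bullet(DX,v)}.
\]
Repeating the $l_1$-expansion from the proof of Theorem \ref{th11.5}, the right-hand side is bounded by $\Theta \cdot \#(DX^v_{-(n+1)})$ for the universal constant $\Theta = \max_\omega \|[D\mathsf E_\omega, D(\delta \mathsf E_\omega)]\|_\Delta^{\Omega^\bullet}$, and $\#(DX^v_{-(n+1)}) \leq (n+2) \cdot \#(\mathcal T(v)_{-n})$ since each $\g \in \mathcal T(v)_{-n}$ contributes $\#\textup{supp}(\omega_\g) \leq n+2$ zero-dimensional strata to $DX$. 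Chaining these estimates with the Proportionality Theorem produces the asserted lower bound with a universal $c(n) > 0$. The delicate step is the amenability verification for interior strata under $\phi_*$; here the ball hypothesis on $U$ and the shadow structure of a traversally generic $v$ conspire to reduce everything to the triviality of $\pi_1$-images coming from $\d X \subset B^{n+1}$, so no independent hypothesis on $\pi_1(\d X) \to \pi_1(DX)$ is ever invoked.
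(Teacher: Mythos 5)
The paper does not actually prove Theorem~\ref{th11.12}: it states the result and attributes the proof to \cite{AK}, so there is no argument in this text to compare yours against. Judged on its own merits, your proposal is correct and complete. The crucial move --- running Gromov's Localization Lemma~\ref{lem11.1} with the aspherical hyperbolic target $M$ in place of $K(\pi_1(DX),1)$, via the degree-one collapse $\phi\colon DX\to M$ --- is exactly what is needed. The extension of $\phi$ over $\tau(X)$ into the contractible ball exists, $\phi$ has degree $\pm 1$ because a point of $M$ outside the ball has a unique preimage, and the amenability hypothesis of the Lemma is discharged correctly: boundary strata map into $B^{n+1}$ and hence have trivial $\phi_*$-image, while for interior strata the fibration-plus-section argument from the proof of Theorem~\ref{th11.3} shows that $\phi_*\pi_1(\sigma)$ coincides with the $\phi_*$-image of a $\pi_1$ of a subset of $\partial X\subset B^{n+1}$, which is again trivial; strata in $\tau(X)$ are mapped entirely into $B^{n+1}$. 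This is an important feature, not a cosmetic one, because the theorem permits $U$ to be an arbitrary domain inside the ball, so $\pi_1(\partial U)$ can be complicated, and one cannot simply cite Theorem~\ref{th11.9} after verifying the amenability hypothesis in $\pi_1(DX)$. The concluding chain $\|[M]\|_\Delta\le\Theta\cdot\#\bigl(DX^v_{-(n+1)}\bigr)\le\Theta(n+2)\cdot\#\bigl(\mathcal T(v)_{-n}\bigr)$ together with the Proportionality Theorem~(\ref{eq11.3}) gives the universal constant $c(n)=\bigl(c_{\mathrm{prop}}(n)\,\Theta\,(n+2)\bigr)^{-1}$. One minor point of hygiene: you reuse the letter $U$ for the regular neighborhood of $DX^v_{-(n+1)}$ after having already used it for the deleted domain; rename one of them in a final write-up.
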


\begin{corollary}\label{cor11.13} The inequality (\ref{eq11.42}) of Theorem \ref{th11.12} is valid for any $X$, obtained by deleting a number of $(n+1)$-balls from a closed hyperbolic manifold $M$.
\end{corollary}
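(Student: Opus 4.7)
The plan is to apply the Alpert--Katz inequality (Theorem~\ref{th11.9}) directly to $X = M \setminus (B_1 \sqcup \cdots \sqcup B_q)$, reducing the corollary to establishing a lower bound on $\|[DX]\|_\Delta$ proportional to $\textup{Vol}(M)$. First I would check the amenability hypothesis of Theorem~\ref{th11.9}: each component of $\partial X$ is a sphere $\partial B_i \cong S^n$, whose fundamental group is either trivial (for $n \geq 2$) or $\Z$ (for $n = 1$); in either case the image in $\pi_1(DX)$ is amenable, so Theorem~\ref{th11.9} yields $\#(\mathcal T(v)_{-n}) \geq \rho(n) \cdot \|[DX]\|_\Delta$.

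Next I would identify the diffeomorphism type of $DX$ by standard handle calculus. Doubling along the single sphere $\partial B_1$ already produces a connected sum, while each of the remaining $q - 1$ boundary identifications $\partial B_i \leftrightarrow \tau(\partial B_i)$ amounts to attaching a $1$-handle between the two copies of $X$, producing a non-separating $S^n \subset DX$. The upshot is
\[
DX \;\cong\; M \# M \# \underbrace{(S^n \times S^1) \# \cdots \# (S^n \times S^1)}_{q-1 \text{ copies}}.
\]
Pinching each $S^n \times S^1$ summand to a point gives a degree-one map $\phi: DX \to M \# M$, and since the simplicial semi-norm is non-increasing under continuous maps, $\|[DX]\|_\Delta \geq \|[M \# M]\|_\Delta$.

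Now I would invoke Gromov's connected-sum formula (valid for $n + 1 \geq 3$) together with the vanishing $\|[S^n \times S^1]\|_\Delta = 0$ (a consequence of the existence of self-maps of degree $> 1$, or of the amenability of $\pi_1(S^n \times S^1) = \Z$) to obtain $\|[M \# M]\|_\Delta = 2\|[M]\|_\Delta$. In the surface case $n = 1$, a direct genus computation still gives $\|[DX]\|_\Delta \geq 2\|[M]\|_\Delta$. By the Proportionality Theorem (\ref{eq11.3}), $\|[M]\|_\Delta = \textup{Vol}(M)/c(n+1)$ for a universal $c(n+1) > 0$.

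Assembling the inequalities,
\[
\#(\mathcal T(v)_{-n}) \;\geq\; \rho(n) \cdot \|[DX]\|_\Delta \;\geq\; \frac{2\rho(n)}{c(n+1)}\cdot \textup{Vol}(M),
\]
and setting $c(n) = 2\rho(n)/c(n+1)$ completes the proof. The main obstacle is the handle-theoretic identification of $DX$ and the verification that the ``extra'' $S^n \times S^1$ summands carry zero simplicial volume; both are standard, but together they furnish the precise mechanism by which the multi-ball case is absorbed into a universal lower bound driven by $\|[M \# M]\|_\Delta$ and hence by $\textup{Vol}(M)$.
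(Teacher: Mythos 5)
Your proof is correct but takes a genuinely different, and considerably more elaborate, route than the paper's. The paper's proof is a one-liner: since $M$ is connected, any finite union of disjoint $(n+1)$-balls can be absorbed into a single larger $(n+1)$-ball $B$ (join the balls by disjoint arcs and take a regular neighborhood of the resulting $1$-complex), so the deleted domain $U$ automatically satisfies the hypothesis of Theorem~\ref{th11.12} that it be contained in a ball, and Theorem~\ref{th11.12} applies as stated, with the same constant $c(n)$. You instead bypass Theorem~\ref{th11.12} entirely and go back to Theorem~\ref{th11.9}: you first check the (trivially satisfied) amenability hypothesis on $\partial X \cong \sqcup S^n$, then explicitly identify $DX \cong M \# M \# (S^n\times S^1)^{\#(q-1)}$ by handle calculus, apply Gromov's connected-sum additivity (for $n+1\geq 3$) together with $\|[S^n\times S^1]\|_\Delta = 0$ to get $\|[DX]\|_\Delta = 2\|[M]\|_\Delta$, handle the surface case $n=1$ by a genus count, and finally invoke the Proportionality Theorem. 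Each step is sound, and the handle-theoretic computation of $DX$ is correct, but the argument imports substantially more machinery than the encapsulation trick; it also produces a constant $2\rho(n)/c_{\mathrm{Gromov}}(n+1)$ that is a priori not the same as the $c(n)$ of Theorem~\ref{th11.12}, whereas the paper's reduction preserves that constant verbatim (this is harmless for the intended conclusion, but worth noting if one reads ``the inequality \dots\ is valid'' as including the constant). The paper's argument buys brevity and constant preservation; yours buys an explicit description of $DX$ and a self-contained derivation from the simplicial-volume side, which is instructive but not necessary here.
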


\begin{proof} When the domain $U$ from Theorem \ref{th11.12} is a union of $(n+1)$-balls, we can incapsulate them into a single $(n+1)$-ball $B$. 
Therefore, Theorem \ref{th11.12} is applicable to $X = M \setminus U$.
\end{proof}

Let us apply Corollary \ref{cor11.13} to the landscape of Morse functions on closed hyperbolic manifolds. Moving towards this goal,  we formulate the following 


\begin{conjecture}\label{conj11.1} Let $f: M \to \R$ be a Morse function on a closed manifold $M$ of dimension $(n+1)$, and $v$ its gradient-like field. Assume that $v$ satisfies the Morse-Smale transversality condition (\cite{S}, \cite{S1}). Then, for all sufficiently small $\e > 0$, the  field $v$ on $$X =_{\mathsf{def}} M \setminus \coprod_{x \in \Sigma_f} B_\e(x)$$ is traversally generic.  The combinatorial types of the $v$-trajectories on $X$ are drawn from the list:
$(11), (121), (1221), \dots , (1\underbrace{2 \dots 2}_{n}1).$

Moreover, there exists a universal constant $c(n) > 0$ such that the number of broken $v$-trajectories on $M$, comprising $(n+1)$ segments, and the number of $n$-tangent $v$-trajectories in $X$ are related by the formula 
\begin{eqnarray}\label{eq11.44}
\#\big(\mathcal T(v, \omega_\star)\big) = c(n) \cdot \#\big(\mathsf{broken}_{(n+1)}(v)\big), 
\end{eqnarray} 
where $\omega_\star = (1\underbrace{2 \dots 2}_{n}1).$
\hfill $\diamondsuit$
\end{conjecture}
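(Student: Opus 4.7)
\medskip

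\noindent\textbf{Proof proposal for Conjecture \ref{conj11.1}.}
The plan is to verify the three claims --- traversal genericity of $v$ on $X$, the restricted list of combinatorial types, and the matching formula (\ref{eq11.44}) --- by combining the Morse normal form near the critical points with Morse--Smale transversality in their complement. First I would fix Morse coordinates $(x_1,\dots,x_k,y_1,\dots,y_{n+1-k})$ in a neighborhood of each critical point $p\in\Sigma_f$ so that $f-f(p)=-|x|^2+|y|^2$ and, after a smooth perturbation absorbable into the choice of $v$, the gradient-like field equals $v=-\sum x_i\d_{x_i}+\sum y_j\d_{y_j}$. For the squared radius $\rho=|x|^2+|y|^2$ one computes $\mathcal L_v\rho=-2|x|^2+2|y|^2$ and $\mathcal L_v^{(2)}\rho=4\rho$, so on the sphere $S_\e(p)=\{\rho=\e^2\}$ the $v$-tangency locus $\{|x|=|y|\}\cap S_\e(p)$ has multiplicity exactly $2$ (and multiplicity $1$ off this locus), and no higher-order tangency can occur. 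It follows that every $v$-trajectory of $X=M\setminus\coprod_p B_\e(p)$ meets $\d X$ in finitely many points each of multiplicity $1$ or $2$, its endpoints being transversal crossings, so $v$ is traversing.

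Next, using Morse--Smale transversality of the stable and unstable manifolds away from the balls and the local normal form inside them, I would verify the special-coordinate presentation (\ref{eq2.4}) of Section 2 around any $v$-trajectory $\g$ of multiplicity pattern $\omega$. The flow-parameter $u$ is provided along $\g$; the $\vec x$-variables unfold each simple tangency at the successive boundary spheres $\d B_\e(p_i)$; the transverse $\vec y$-variables parametrize directions in the source stratum $W^u(p_0)\cap W^s(p_{n+1})$ complementary to $\g$. Morse--Smale transversality guarantees that these unfoldings are independent, so the polynomial $\wp(u,\vec x)$ in (\ref{eq2.4}) has all coefficients in $\vec x$ free, establishing Definition 3.2 of \cite{K2} and hence traversal genericity. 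Since the only intrinsic multiplicities at intermediate tangencies are $2$ and the endpoints are transversal, the list of admissible patterns for $v$ is exactly $(11),(121),(1221),\dots,(1\underbrace{2\cdots 2}_n 1)$, the upper bound $n$ coming from the fact that indices strictly decrease along any broken Morse--Smale chain and hence a chain of length exceeding $n+1$ cannot fit in $M^{n+1}$.

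For the formula (\ref{eq11.44}), I would pass to the compactified moduli space of Morse--Smale trajectories. An $(n+1)$-segment broken trajectory through $(p_0,\dots,p_{n+1})$ has necessarily $\mathrm{ind}(p_i)=n+1-i$ and is a codimension-$n$ boundary stratum of the $n$-parameter family of unbroken trajectories from $p_0$ to $p_{n+1}$. Inside the product of local models around $p_1,\dots,p_n$, the $n$ independent grazing conditions $\{\mathcal L_v\rho_i=0,\,\rho_i=\e^2\}$ cut out a finite set of unbroken nearby trajectories whose cardinality depends only on the local linear models at each $p_i$, hence only on $n$. Denote this universal cardinality by $c(n)$. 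Then by an inductive gluing argument in the Morse--Smale moduli space (analogous to Floer-theoretic gluing), for $\e$ sufficiently small every $\omega_\star$-trajectory of $v$ in $X$ is close to a unique $(n+1)$-segment broken chain and, conversely, each broken chain gives rise to exactly $c(n)$ such $\omega_\star$-trajectories, which is the desired equality.

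The main obstacle will be this last step: establishing the bijective-with-multiplicity correspondence between $\omega_\star$-trajectories and $(n+1)$-segment broken chains uniformly in $\e$. Showing that the $n$ grazing conditions combine to produce exactly $c(n)$ nearby unbroken trajectories (and not fewer or more) requires careful inductive control of higher-order corrections to the linearized flow at each intermediate critical point, together with a compactness argument in the Morse--Smale moduli space ruling out $\omega_\star$-trajectories that are not close to any broken chain --- both reminiscent of, but considerably more delicate than, the verification of traversal genericity itself.
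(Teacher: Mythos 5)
This is a conjecture in the paper, not a theorem --- the author does not prove it, remarking only (without argument) that $c(2)=c(3)=4$. So there is no proof in the paper to compare against, and you are attempting an open problem; the following should therefore be read as an assessment of your proposed program rather than a comparison.

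Your outline is a sensible plan of attack, and your key structural observation --- that Morse--Smale transversality forces the index sequence along any $(n+1)$-segment broken chain to be exactly $n+1, n, \ldots, 1, 0$, so the local picture at every maximal chain is the same and a single universal constant $c(n)$ could exist --- is the right starting point. The local computation $\mathcal L_v\rho=-2|x|^2+2|y|^2$, $\mathcal L_v^{(2)}\rho=4\rho>0$ on $S_\e(p)$ is correct and shows that the linearized model produces only multiplicity-$\le 2$ tangencies. But you lean on two steps that you invoke in a sentence each, yet which carry essentially all the weight. (i) Traversal genericity is the jet-transversality condition of \cite{K2}, and the passage from Morse--Smale transversality of stable and unstable manifolds to the independence of simultaneous grazing conditions at several spheres (in the semialgebraic sense of formula (\ref{eq2.4})) is not an automatic implication; you also need the nonlinear corrections to $v$ near each $p_i$ not to spoil the quadratic picture, and the ``sufficiently small $\e$'' must be chosen uniformly over all trajectories. (ii) The gluing/compactness correspondence --- every $\omega_\star$-trajectory is close to a unique broken chain, and each broken chain supports exactly $c(n)$ nearby $\omega_\star$-trajectories for all small enough $\e$ --- is the entire content of formula (\ref{eq11.44}), and you rightly flag it as the main obstacle. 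Until (i) and (ii) are carried out, this is a plausible program rather than a proof; a good sanity check would be to do the local count at a single index-$k$ intermediate critical point and verify that the product over the chain recovers the paper's claimed values $c(2)=c(3)=4$.
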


In fact, we can show that $c(2) = 4$ and $c(3) = 4$. \smallskip

Combining Conjecture \ref{conj11.1} with Corollary \ref{cor11.13}, one could arrive to an estimate of \hfill\break $\#(\mathsf{broken}_{(n+1)}(v))$ from below for Morse-Smale gradient fields $v$ on closed hyperbolic $(n+1)$-manifolds.  

Fortunately, regardless of the validity of the conjecture, $\#(\mathsf{broken}_{(n+1)}(v))$ has a lower boundary, delivered by the normalized hyperbolic volume!  The beautiful proposition below  has been recently proven by Hannah Alpert \cite{A}. The proof involves the same circle of amenable localization techniques as in \cite{Gr1}, \cite{AK}, being applied to intricate stratifications of certain compactifications of  the unstable manifolds of the gradient $v$-flow.

\begin{theorem}[{\bf Alpert}]\label{th11.A} Let $f: M \to \R$ be a Morse function on a closed hyperbolic manifold $M$ of dimension $(n+1)$ and $v$ its gradient-like field. Assume that $v$ satisfies the Morse-Smale transversality condition. Then 
$$\#(\mathsf{broken}_{(n+1)}(v)) \geq 
\textup{Vol}(M).$$
\hfill $\diamondsuit$
\end{theorem}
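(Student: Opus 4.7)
The plan is to adapt the amenable localization philosophy of Theorems \ref{th11.7} and \ref{th11.10} to the closed manifold $M$, with the Morse-Smale decomposition into unstable manifolds playing the role of the flow-induced stratification of $\mathcal T(v)$. Fully broken trajectories of length $n+1$ will appear as the $0$-dimensional corners of a natural stratification $\mathcal S$ of $M$, and the argument then runs parallel to the proof of Theorem \ref{th11.3}, up to a careful calibration of constants via hyperbolic geometry.

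First I would form $\mathcal S$ from the unstable manifolds $\{W^u(x)\}_{x\in\Sigma_f}$ together with their iterated corner strata. The unstable manifold $W^u(x)$ of an index-$k$ critical point admits a compactification $\overline{W^u(x)}$ as a manifold with corners whose codimension-$j$ corner strata are parametrized by descending chains $x=x_0\succ x_1\succ\cdots\succ x_j$ of critical points with $\mathrm{ind}(x_i)=k-i$. In particular, the $0$-dimensional corners of the compactified unstable manifolds of index-$(n+1)$ critical points are in canonical bijection with the elements of $\mathsf{broken}_{(n+1)}(v)$.

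Next, I would plug this stratification, the fundamental class $[M]\in H_{n+1}(M)$, and the classifying map $\alpha:M\to K(\pi_1(M),1)$ into Gromov's Localization Lemma \ref{lem11.1}. Every open cell $W^u(x)$ is contractible, and every lower-dimensional stratum is a product of moduli spaces of broken trajectories with a further unstable manifold; all such strata have trivial (hence amenable) image in $\pi_1(M)$. Taking a small regular neighborhood $U$ of the codimension-$(n+1)$ locus and a normal $(n+1)$-disk $D^{n+1}_\sigma$ at each of its vertices $\sigma$, the localization argument from the proof of Theorem \ref{th11.3} yields
\[
\|[M]\|_\Delta\;\leq\;\sum_{\sigma\in\mathsf{broken}_{(n+1)}(v)}\bigl\|[D^{n+1}_\sigma,\partial D^{n+1}_\sigma]\bigr\|^{\mathcal S}_\Delta.
\]

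The sharp coefficient comes from hyperbolicity. Following Thurston's proof of the Proportionality Theorem (\ref{eq11.3}), I would represent each local relative class $[D^{n+1}_\sigma,\partial D^{n+1}_\sigma]$ by straightened geodesic simplices in the hyperbolic universal cover $\widetilde M=\mathbb H^{n+1}$, averaged (``smeared'') over the isometry group, so that the stratified semi-norm of each chamber equals $1/c(n+1)$, where $c(n+1)$ is the proportionality constant from (\ref{eq11.3}). Summing and using $\textup{Vol}(M)=c(n+1)\cdot\|[M]\|_\Delta$ then cancels the constant and delivers $\#(\mathsf{broken}_{(n+1)}(v))\geq\textup{Vol}(M)$. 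The main obstacle is precisely this calibration: producing stratified singular cycles that simultaneously attain the ideal-simplex optimum and satisfy the cellular, order, internality, and loop conditions of Definition 3.4 with respect to the Morse-Smale decomposition. Verifying this compatibility between the hyperbolic smearing construction and the Morse-Smale corner structure is the technical core of Alpert's argument in \cite{A}.
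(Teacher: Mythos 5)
Be aware that the paper itself does not prove this theorem: it is stated as an external result of Alpert, cited to the manuscript [A], with only a one-sentence indication of method (``amenable localization techniques\ldots applied to intricate stratifications of certain compactifications of the unstable manifolds of the gradient $v$-flow''). So there is no in-paper proof to check your sketch against; I assess it on its own terms.

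Your sketch has a concrete gap that would prevent it from constituting a proof. The $0$-dimensional corners of the compactified unstable manifolds $\overline{W^u(x)}$ are in bijection with fully broken trajectories only \emph{inside} the abstract manifolds-with-corners $\overline{W^u(x)}$; under the evaluation map $\overline{W^u(x)} \to M$, each such corner is sent to the terminal index-$0$ critical point of the corresponding chain. Consequently the stratification of $M$ that you build (by unstable manifolds and their closures) has, for its codimension-$(n+1)$ stratum, the finite set of \emph{minima} of $f$, not the set $\mathsf{broken}_{(n+1)}(v)$; many fully broken trajectories project to the same minimum, and the index set of your displayed sum $\sum_{\sigma\in\mathsf{broken}_{(n+1)}(v)}\bigl\|[D^{n+1}_\sigma,\partial D^{n+1}_\sigma]\bigr\|^{\mathcal S}_\Delta$ is simply not the vertex set of the codimension-$(n+1)$ locus of your stratification of $M$. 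Applying Lemma \ref{lem11.1} to $[M]$ with that stratification bounds $\|[M]\|_\Delta$ by a universal constant times the number of minima, which is both the wrong quantity and far too small. The ``intricate'' part of Alpert's argument is precisely the mechanism that transfers simplicial-volume data on $M$ into the corner combinatorics of the abstract $\overline{W^u(x)}$'s (rather than of $M$ itself), and your outline omits that step entirely. In addition, even after such a repair, the ``smear and cancel'' calibration as you describe it would at best reproduce the shape of Theorem \ref{th11.12} ($\geq c(n)\cdot\textup{Vol}(M)$, with $c(n)$ governed by a universal stratified semi-norm $\Theta$ as in Theorem \ref{th11.3}); the constant-free inequality $\geq\textup{Vol}(M)$ of the present statement needs a direct, per-simplex comparison between hyperbolic volume and broken-trajectory intersections that your outline does not supply.
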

\smallskip

We just have internalized the crucial role that non-amenable fundamental groups  $\pi_1(X)$ and $\pi_1(DX)$ play in delivering some lower bounds of the traversing complexities $\mathbf{tc}^{\mathsf{lex}}_{\mathsf{trav}}(X)$ and $\mathbf{tc}^{\mathsf{lex}}_{\mathsf{trav}}(DX)$. 

Now we will connect the complexity of another fundamental group $\pi_1(X/\d X)$ with the number of \emph{minimal} connected components in the $\Omega^\bullet$-stratification $\{X(v, \omega)\}_\omega$ of $X$. The considerations to follow are rather elementary; in particular, the amenability properties do not play any role here.\smallskip

Each $v$-trajectory $\g$ of the combinatorial type $\omega$ defines a loop or a bouquet  of loops in the quotient space $X/ \d X$. Similarly, each $\g$ defines its double $D(\g) \subset DX$.  The double $D(\g)$ is a chain of loops (like ``$\infty$" or  ``ooo"), the number of loops in the chain being equal to $\#(\g \cap \d X) -1$.

Therefore each $\g$ produces an element $[\g] \in \pi_1(X/ \d X)$ and a subgroup $[[\g]]$ of $\pi_1(X/ \d X)$, equipped with the ordered set of $(|\sup(\omega)| -1)$ generators---the $v$-ordered loops of the bouquet $[\g]$. Here $|\sup(\omega)|$ is the cardinality of the set $\g \cap \d X$. The element $[\g]$ and the subgroup $[[\g]]$ are constant within each connected component $X(v,  \s)$ of the pure stratum $X(v, \omega)$. This follows from the fact that the finite covering $$\Gamma: X(v,  \s) \cap \d X \to \Gamma\big(X(v,  \s) \cap \d X\big)$$ is trivial (\cite{K3}, Corollary 5.1). 
Let us denote by $\mathcal S^\bullet(v)$ the poset whose elements are the connected components $X(v, \s)$.

Therefore, we get a system of groups $\{[[\g_\s]]\}_{\s \in \mathcal S^\bullet(v)}$, linked by homomorphisms $\psi_{\s, \s'}: [[\g_\s]] \to [[\g_{\s'}]]$ for any pair $\s \succ \s'$ in $\mathcal S^\bullet(v)$.

This construction leads to the following 

\begin{lemma}\label{lem11.3} For a traversally generic field $v$ on $X$, the subgroups $\big\{[[\g_\s]]\big\}_{_{\s \in \mathcal S^\bullet_{\min}(v)}}$ generate $\pi_1(X/ \d X)$.
\end{lemma}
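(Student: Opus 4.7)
The plan is in two stages. First, I will show that the full collection $\{[[\gamma_\sigma]]\}_{\sigma \in \mathcal S^\bullet(v)}$ already generates $\pi_1(X/\partial X, \star)$. Second, I will show that for any non-minimal $\sigma$ and any $\sigma' \prec \sigma$ in $\overline{\sigma}$, one has a subgroup inclusion $[[\gamma_\sigma]] \subseteq [[\gamma_{\sigma'}]]$ inside $\pi_1(X/\partial X)$; this is in effect the homomorphism $\psi_{\sigma,\sigma'}$ introduced just before the lemma. Iterating the inclusion along a descending chain in the poset $\mathcal S^\bullet(v)$ from $\sigma$ down to a minimal element in $\overline{\sigma}$ reduces the generation claim from all strata to minimal strata.

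For the first stage, cover $X$ by the open trajectory tubes $\{U_\gamma\}_{\gamma \in \mathcal T(v)}$ provided by the product model of bullet~5 of Definition~3.1, applied through the shadow $\Gamma \colon X \to \mathcal T(v)$ of Theorem~4.1; after adjoining a small open collar of $\partial X$ to each $U_\gamma$ we obtain a saturated open cover of $X$ whose images $\bar U_\gamma \subset X/\partial X$ are open neighborhoods of $\star$. Each $U_\gamma$ deformation retracts onto $\gamma$ (the $D^{n-\mu'(\omega)}$ factor contracts, and the local region $\mathsf E_\omega$ of formula~(2.4) retracts onto its central trajectory), so $\bar U_\gamma$ deformation retracts onto the image of $\gamma$ in $X/\partial X$, a wedge of $|\sup(\omega_\gamma)|-1$ circles with fundamental group $[[\gamma]]$. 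Van Kampen's theorem applied to this cover therefore gives $\pi_1(X/\partial X,\star) = \bigl\langle [[\gamma_p]] : p \in X \bigr\rangle$, and the constancy of $[[\gamma_p]]$ on connected components of pure strata (recalled just before the lemma) collapses this to $\bigl\langle [[\gamma_\sigma]] : \sigma \in \mathcal S^\bullet(v)\bigr\rangle$.

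For the second stage, pick a continuous path $c \colon [0,1] \to \overline{\sigma}$ with $c(0) = p \in \sigma$ and $c(1) = q \in \sigma'$, possible because $\overline{\sigma}$ is path-connected. The flow-adjusted polynomial model~(2.4) shows that the trajectory $\gamma_{c(t)}$ and its ordered intersection divisor $\{a_i(t)\} = \gamma_{c(t)} \cap \partial X$ vary continuously in $t$: the real roots in $u$ of $\wp(u,\vec x(t)) = 0$ deform continuously with $\vec x(t)$, with possible mergers at $t=1$ but no sudden appearances or disappearances, and their $v$-order along $\gamma_{c(t)}$ is preserved throughout. A generator of $[[\gamma_p]]$ is represented by the loop $L_0$ in $X/\partial X$ running from $\star$ along $\gamma_p$ between two consecutive boundary points $a_i(0), a_{i+1}(0)$; transporting $L_0$ along the family produces a homotopy in $X/\partial X$ (endpoints remain in $\partial X$, hence at $\star$, throughout) ending at a terminal loop $L_1$ on $\gamma_q$ between $\lim_{t\to 1}a_i(t)$ and $\lim_{t\to 1}a_{i+1}(t)$, both in the support of $\gamma_q \cap \partial X$. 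The loop $L_1$ factors inside $[[\gamma_{\sigma'}]]$ as a product of consecutive bouquet generators (the empty product if the two limit points coincide), giving the desired inclusion $[[\gamma_\sigma]] \subseteq [[\gamma_{\sigma'}]]$.

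The main obstacle is the combinatorial tracking in the second stage: when several $a_i(t)$ coalesce at $t=1$ onto a single higher-multiplicity tangent point of $\gamma_q$, one has to verify that the $v$-ordered indexing of the bouquet of $\gamma_p$ passes coherently to the $v$-ordered indexing of the bouquet of $\gamma_q$ used to define the generators of $[[\gamma_{\sigma'}]]$. This is controlled by the explicit form of $\wp$ in~(2.4) --- its real roots in $u$ are ordered along $\gamma$, which is the $v$-flow line --- but requires care in identifying which generators of $[[\gamma_{\sigma'}]]$ pair up into products expressing each generator of $[[\gamma_\sigma]]$.
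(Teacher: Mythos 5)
Your two-stage plan (generate $\pi_1(X/\d X)$ by all $[[\g_\s]]$, then reduce to the minimal $\s$ by descending the poset) matches the overall strategy of the paper, and your Stage~2 is essentially the paper's inductive step: the paper homotopes a segment of $\g \subset X(v,\s)$ to a segment of $\g' \subset X(v,\s')$ with $\s' \prec \s$, using the triviality of the covering $\Gamma\colon X(v,\s)\cap\d X\to \mathcal T(v,\s)$, which is exactly what your subgroup inclusion $[[\g_\s]]\subseteq[[\g_{\s'}]]$ records. One slip there: the assertion that roots of $\wp(u,\vec x(t))$ suffer ``no sudden appearances or disappearances'' is not correct --- as one degenerates from $(11)$ to $(121)$, a new tangency point \emph{appears}, and from $(11)$ to $(2)$ two points merge into one --- but this does not break the inclusion: the transported loop still factors as a product (possibly empty) of consecutive bouquet generators of $\g_{\s'}$, which is what you need.

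Where you genuinely diverge from the paper is Stage~1. The paper avoids van Kampen entirely: it homotopes any path $(\rho,\d\rho)\colon(I,\d I)\to(X,\d X)$ rel $\d X$ using the $(-v)$-flow into an alternating concatenation of trajectory segments and arcs lying in $\d X$; the $\d X$-arcs become constant in $X/\d X$, so the loop is manifestly a product of bouquet generators. This is a one-step argument and sidesteps the hypotheses of Seifert--van Kampen. Your cover-by-tubes argument is plausible but leaves a hypothesis unverified: to conclude \emph{surjectivity} of $\ast_\g\pi_1(\bar U_\g)\to\pi_1(X/\d X)$ you need each pairwise intersection $\bar U_\g\cap\bar U_{\g'}$ to be path-connected. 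This is not automatic, though it is likely true once you observe that each tube $U_\g$ is $\Gamma$-saturated (a union of $v$-trajectories), so every component of $U_\g\cap U_{\g'}$ is saturated and hence meets $\d X$; in the quotient, every such component therefore meets $\star$, and adjoining the collar makes the intersection path-connected. You should either supply this argument explicitly or adopt the paper's direct flow-homotopy, which gets you the Stage~1 conclusion with no cover at all. As a further remark, the path $c$ you choose in Stage~2 lives in $\overline\sigma$ and may pass through several intermediate strata where the local polynomial model changes; a cleaner formulation is the paper's one-step-at-a-time descent $\s\succ\s''\succ\cdots\succ\s'$ along covering relations of the poset, replacing segments one stratum at a time and using at each step the local model around a $\s''$-trajectory.
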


\begin{proof} We need to show that any loop $\rho: I/\d I \to X/ \d X$ through the point $\star = \d X/ \d X$ is a product of loops of the form the groups $[[\g_\s]]$, where $\s$ is a minimal element of the poset $\mathcal S^\bullet(v)$. Equivalently, it will suffice to show that any path $(\rho, \d\rho): (I, \d I) \to (X, \d X)$ can be homotoped, relatively to the boundary $\d X$, to an ordered union several \emph{segments} of oriented trajectories $\g$, labeled with the elements of the minimal set $\mathcal S^\bullet_{\min}(v)$. Here the segments of trajectories $\g$ are bounded by two points from $\g \cap \d X$ and the orientations of segments are not necessarily the ones induced by $v$. 

Note that each oriented segment of $\g$ belongs to the subgroup $[[\g]]$.

First, with the help of the $(-v)$-flow, we homotop $(\rho, \d\rho)$ to a path that is realized by several segments of trajectories, intermingled with some paths residing in $\d X$. 

The rest of argument is an induction based on the order $\succ$ in $\mathcal S^\bullet(v)$. We can replace any segment of $\g \subset X(v, \s)$ with the corresponding segment of any other trajectory from that stratum. This replacement is possible since $\Gamma: X(v, \s) \cap \d X \to \mathcal T(v, \s)$ is a trivial covering. Moreover, we can homotop such segment of $\g$ to a segment of some trajectory $\g'$, residing in any stratum $X(v, \s')$ that belongs to the closure of $X(v, \s)$. In other words, we can replace any segment of $\g \subset X(v, \s)$ with some segment of $\g' \subset X(v, \s')$, where $\s' \prec \s$. Following these replacements, eventually we will arrive to a ordered finite collection of segments of $\{\g_\s \subset X(v, \s)\}_{\s \in \mathcal S^\bullet_{\min}(v)}$; the collection will  represent the relative homotopy class of the path $\rho$ (some of the  the segments of trajectories $\g_\s$'s in this representation may appear with integral multiplicities). 
\end{proof}

We denote by $c_{\mathsf{gen}}(\pi)$ the minimal number of generators in finite presentations of the group $\pi$.

\begin{theorem}\label{th11.13} For a traversally generic field $v$ on a connected compact smooth manifold $X$ with boundary, $$c_{\mathsf{gen}}\big(\pi_1(X/ \d X)\big) \leq \sum_{\s \in \mathcal S^\bullet_{\min}(v)} \; \Big(|\sup(\omega(\s))|-1\Big),$$
where  $\omega(\s)$ is the combinatorial type of a typical $v$-trajectory passing through the connected component labeled by $\s$ .
\end{theorem}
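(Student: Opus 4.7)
The plan is to derive the theorem directly from Lemma \ref{lem11.3} by reading off the cardinality of an explicit generating set for each subgroup $[[\g_\s]]$. Lemma \ref{lem11.3} already establishes that the collection $\{[[\g_\s]]\}_{\s \in \mathcal S^\bullet_{\min}(v)}$ generates $\pi_1(X/\d X)$; hence it will suffice to exhibit, for every minimal stratum $\s$, a generating set of $[[\g_\s]]$ of cardinality $|\sup(\omega(\s))| - 1$ and then take the union of these sets.

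For this step I would use the construction of $[[\g_\s]]$ recalled in the paragraph preceding Lemma \ref{lem11.3}. A trajectory $\g$ of combinatorial type $\omega$ meets $\d X$ in the ordered divisor $\g \cap \d X$ whose support has cardinality $|\sup(\omega)|$; these boundary points subdivide $\g$ into $|\sup(\omega)| - 1$ consecutive closed sub-arcs. After collapsing $\d X$ to the basepoint $\star = \d X/\d X$, each sub-arc becomes a loop at $\star$, and the ordered bouquet of these $|\sup(\omega)| - 1$ loops is, by definition, the ordered set of $v$-generators of $[[\g_\s]]$. The fact that the isomorphism type of this bouquet (hence of $[[\g_\s]]$) is constant on $\s$ uses the triviality of the covering $\Gamma : X(v,\s)\cap \d X \to \mathcal T(v,\s)$ (\cite{K3}, Corollary 5.1), which is already invoked in the discussion preceding Lemma \ref{lem11.3}.

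Combining the two ingredients gives
\[
c_{\mathsf{gen}}\big(\pi_1(X/\d X)\big) \;\leq\; \sum_{\s \in \mathcal S^\bullet_{\min}(v)} c_{\mathsf{gen}}\big([[\g_\s]]\big) \;\leq\; \sum_{\s \in \mathcal S^\bullet_{\min}(v)} \bigl(|\sup(\omega(\s))| - 1\bigr),
\]
as required. The first inequality is immediate from Lemma \ref{lem11.3}: a generating set for a group is obtained by taking the union of generating sets of a family of subgroups that together generate it; the second inequality is the bouquet count described above.

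I do not expect a real obstacle in this argument, because the substance of the theorem sits inside Lemma \ref{lem11.3}. The only place where care is needed is to verify that the bouquet of $|\sup(\omega)| - 1$ segment-loops really does generate all of $[[\g_\s]]$ (and not merely a subgroup of it), but this is tautological from the definition: $[[\g_\s]]$ was introduced as the subgroup \emph{equipped with} this ordered set of generators. Everything else — the inductive replacement of trajectory arcs across strata of increasing codimension via the closure relations in $\mathcal S^\bullet(v)$ — has already been absorbed into the proof of Lemma \ref{lem11.3}, so the present theorem is essentially a book-keeping corollary.
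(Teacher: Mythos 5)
Your proof is correct and takes essentially the same route as the paper: both use Lemma \ref{lem11.3} to reduce to counting generators of the subgroups $\{[[\g_\s]]\}_{\s \in \mathcal S^\bullet_{\min}(v)}$, and both observe that $[[\g_\s]]$ comes with a preferred generating set of cardinality $|\sup(\omega(\s))|-1$ (the loops cut out of a representative trajectory by the $|\sup(\omega(\s))|$ boundary points). The intermediate inequality $c_{\mathsf{gen}}(\pi_1(X/\d X)) \leq \sum_\s c_{\mathsf{gen}}([[\g_\s]])$ is a harmless refinement — the paper passes directly to the bouquet count — but the substance is identical.
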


\begin{proof} By Lemma 4.1, 
any element $\beta \in \pi_1(X/ \d X)$ produces a word in the alphabet whose letters are the loops, generated by the segments of special trajectories that label the minimal strata of  $ \mathcal S^\bullet_{\min}(v)$; each minimal stratum $X(v, \s)$, $\s \in \mathcal S^\bullet_{\min}(v)$, contains a unique special trajectory. The number of loops-letters in each trajectory is $|\sup(\omega(\s))|-1$. Therefore the total number of letters in the alphabet is given by the RHS of the formula above.
\end{proof}

\begin{corollary}\label{cor11.15} For a traversally generic field $v$ on a connected compact smooth $(n+1)$-dimensional manifold $X$ with boundary, the number of minimal connected components in $\mathcal T(v)$ satisfies the inequality:
$$\#\big( \mathcal S^\bullet_{\min}(v)  \big) \geq c_{\mathsf{gen}}\big(\pi_1(X/\d X)\big) /(n +1).$$

In particular, if $\d X$ consists of $m$ components, then $$\#\big( \mathcal S^\bullet_{\min}(v)  \big) \geq (m-1)/(n+1).$$
\end{corollary}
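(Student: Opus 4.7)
The plan is to derive both inequalities as direct consequences of Theorem 4.8 (the statement immediately preceding this corollary), combined with the universal bound on the size of combinatorial patterns for traversally generic fields and a computation of $H_1(X/\partial X)$ via the long exact sequence of the pair.

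First I would invoke the bound from the paper that for a traversally generic vector field $v$ on an $(n+1)$-manifold $X$, every combinatorial type $\omega$ satisfies $|\sup(\omega)| \leq n+2$ (cited from \cite{K2} just before Theorem 4.6). Consequently, for each $\s \in \mathcal S^\bullet_{\min}(v)$, the integer $|\sup(\omega(\s))| - 1$ is at most $n+1$. Substituting this pointwise upper bound into the inequality provided by Theorem 4.8 gives
\[
c_{\mathsf{gen}}\!\big(\pi_1(X/\d X)\big) \;\leq\; \sum_{\s \in \mathcal S^\bullet_{\min}(v)} \big(|\sup(\omega(\s))| - 1\big) \;\leq\; (n+1)\cdot \#\big(\mathcal S^\bullet_{\min}(v)\big),
\]
which is exactly the first inequality asserted in the corollary after dividing by $n+1$.

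For the second (``in particular'') assertion, it suffices to prove that $c_{\mathsf{gen}}(\pi_1(X/\d X)) \geq m - 1$ whenever $\d X$ has $m$ connected components. I would obtain this from the standard inequality $c_{\mathsf{gen}}(\pi) \geq \operatorname{rk}(\pi^{\mathrm{ab}}) = \operatorname{rk}(H_1(-;\Z))$, applied to $\pi = \pi_1(X/\d X)$. Since $(X, \d X)$ is a good pair, $\widetilde H_1(X/\d X) \cong H_1(X, \d X)$. The fragment
\[
H_1(X, \d X) \;\xrightarrow{\d_\ast}\; H_0(\d X) \;\xrightarrow{i_\ast}\; H_0(X)
\]
of the long exact sequence shows that the image of $\d_\ast$ coincides with $\ker(i_\ast)$; but $H_0(\d X) = \Z^m$, $H_0(X) = \Z$ (since $X$ is connected), and $i_\ast$ is the augmentation sending every component of $\d X$ to $1$, so $\ker(i_\ast) \cong \Z^{m-1}$. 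Hence $\operatorname{rk}(H_1(X, \d X)) \geq m - 1$ and therefore $\operatorname{rk}(H_1(X/\d X)) \geq m - 1$.

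Combining the two steps, $\#(\mathcal S^\bullet_{\min}(v)) \geq c_{\mathsf{gen}}(\pi_1(X/\d X))/(n+1) \geq (m-1)/(n+1)$. The argument is essentially a bookkeeping consequence of Theorem 4.8 and standard algebraic topology; there is no real obstacle, only the need to justify the pointwise bound $|\sup(\omega)| \leq n+2$, which is already established in \cite{K2} and quoted in the paper, and to ensure $(X,\d X)$ is treated as a good pair so that the identification $\widetilde H_\ast(X/\d X) \cong H_\ast(X, \d X)$ is valid, which holds automatically for a compact smooth manifold with boundary.
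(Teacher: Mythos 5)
Your derivation of the first inequality coincides with the paper's: substitute the pointwise bound $|\sup(\omega(\s))|\leq n+2$ (hence $|\sup(\omega(\s))|-1\leq n+1$) into the theorem immediately preceding the corollary and divide by $n+1$. The numbering label ``4.8'' differs from the paper's ``4.9'', but you are plainly invoking the correct statement.

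For the second inequality the two arguments genuinely differ, though both are correct. The paper gives a geometric construction: it collapses the complement of a collar of $\d X$ to one vertex and $\d X$ to another, producing a continuous map $X/\d X\to G$ onto a graph with two vertices and $m$ edges; lifting the loops of $G$ yields an epimorphism $\pi_1(X/\d X)\twoheadrightarrow \mathsf F_{m-1}$ onto a free group on $m-1$ generators, which forces $c_{\mathsf{gen}}(\pi_1(X/\d X))\geq m-1$. You instead argue homologically: $c_{\mathsf{gen}}(\pi)\geq \operatorname{rk}(\pi^{\mathrm{ab}})=\operatorname{rk}(H_1)$, and then the fragment $H_1(X,\d X)\to H_0(\d X)\to H_0(X)$ of the long exact sequence, together with $\widetilde H_1(X/\d X)\cong H_1(X,\d X)$ for the good pair $(X,\d X)$, gives $\operatorname{rk}(H_1(X/\d X))\geq m-1$. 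Your route is more elementary and purely algebraic; the paper's route is more geometric and implicitly extracts a \emph{free} quotient (not merely a free abelian one), which in principle is a stronger conclusion about $\pi_1(X/\d X)$, though for the present estimate the two yield the identical bound. Both proofs are valid.
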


\begin{proof} Note that for a traversally generic $v$, $|\sup(\omega(\s))| \leq n + 2$ , so that the number of segments in which a typical trajectory $\g_\s$ is divided by $\d X$ is $n +1$ at most. Utilizing the arguments from Lemma 4.1, 
Theorem 4.9 
implies the first inequality of the corollary.

To prove the second one, note that $X/ \d X$  admits a continuous  map $F$ onto a connected graph $G$ with two vertices, $a$ and $b$, and  $m$ edges. Indeed, let $U$ be a collar of $\d X$ in $X$.  We send $X \setminus \textup{int}(U)$ to $a \in G$, $\d X$ to $b$, and each component of the collar $U$ to the corresponding edge.  Each basic loop in $G$ lifts against $F$ to a loop in $X/ \d X$. Therefore  the exists an epimorphism $\pi_1(X/ \d X) \to \mathsf F_{m-1}$, where $\mathsf F_{m-1}$ denotes the free group in $m -1$ generators.   Thus $c_{\mathsf{gen}}\big(\pi_1(X/\d X)\big) \geq m -1$.
\end{proof}

\noindent{\bf Remark 4.5.}
Note that, by definition, $\#\big( \mathcal S^\bullet_{\min}(v)  \big) \geq \#\big(\mathcal T(v)_{-n } \big)$. Thus, under the hypotheses of Theorem  \ref{th11.12} and by that theorem,  there exists an universal positive constant such that $\#\big( \mathcal S^\bullet_{\min}(v)  \big) \geq const(n)\cdot \textup{Vol}(M)$, where the hyperbolic volume $\textup{Vol}(M)$ depends only on $\pi_1(M)$. 

If $X$ is obtained from $M$ by deleting a single ball,  $\pi_1(M) \approx \pi_1(X/\d X)$. In such a case,  $\#\big( \mathcal S^\bullet_{\min}(v)  \big) \geq c_{\mathsf{gen}}\big(\pi_1(M)\big) /(n+1).$ So, for the hyperbolic $X = M \setminus D^{n+1}$, both lower bounds for $\#\big( \mathcal S^\bullet_{\min}(v)  \big)$ are expressed essentially  in terms of $\pi_1(M)$.
\hfill $\diamondsuit$

\bigskip

{\it Acknowledgments.} The author is grateful to Larry Guth and Hannah Alpert for the enjoyable in-depth discussions that have led to this work. He also likes to thank the referee whose advice helped to improve substantially the quality of this text.

\end{document}